\title{Existence and nonuniqueness of segregated solutions to a class of cross-diffusion systems
\thanks{Supported by the Spanish MCINN Project
MTM2010-18427}}
\author{Gonzalo Galiano  \thanks{Dpt. of Mathematics, Universidad de Oviedo,
 c/ Calvo Sotelo, 33007-Oviedo, Spain ({\tt galiano@uniovi.es, shmarev@orion.ciencias.uniovi.es, julian@uniovi.es})}
    \and Sergey Shmarev \footnotemark[2] \and Juli\'an Velasco \footnotemark[2] }
\date{}
\newtheorem{theorem}{Theorem}[section]
\newtheorem{corollary}{Corollary}
\newtheorem{lemma}[theorem]{Lemma}
\theoremstyle{definition}
\newtheorem{definition}[theorem]{Definition}
\newtheorem{remark}{Remark}
\begin{document}
\maketitle

\begin{abstract}
We study the the Dirichlet problem for the cross-diffusion system
\[
\partial_tu_i=\operatorname{div}\left(a_iu_i\nabla (u_1+u_2)\right)+f_i(u_1,u_2),\quad
i=1,2,\quad a_i=const>0,
\]
in the cylinder $Q=\Omega\times (0,T]$. The functions $f_i$ are assumed to satisfy the
conditions $f_1(0,r)=0$, $f_2(s,0)=0$, $f_1(0,r)$, $f_2(s,0)$ are locally
Lipschitz-continuous. It is proved that for suitable initial data $u_0$, $v_0$ the system
admits segregated solutions $(u_1,u_2)$ such that $u_i\in L^{\infty}(Q)$, $u_1+u_2\in
C^{0}(\overline{Q})$, $u_1+u_2>0$ and $u_1\cdot u_2=0$ everywhere in $Q$. We show that the
segregated solution is not unique and derive the equation of motion of the surface $\Gamma$
which separates the parts of $Q$ where $u_1>0$, or $u_2>0$. The equation of motion of
$\Gamma$ is a modification of the Darcy law in filtration theory. Results of numerical
simulation are presented.
\end{abstract}

\begin{itemize}
 \item[] {\small \emph{Keywords: }
 Nonlinear parabolic equation, cross-diffusion system, segregated solutions, Lagrangian coordinates. }
 
 \item[] {\small \emph{AMS: }35K55, 35K57, 35K65, 35R35 }
\end{itemize}

\section{Introduction}
In the context of Population Dynamics, Gurney and Nisbet \cite{gurney75} derived from
microscopic considerations the density-dependent population flux

\begin{equation}
\notag
 J(u)=c\nabla u + a u \nabla u,
\end{equation}
with positive constants $a$ and $c$. In this expression the term  $c\nabla u$ reflects a
random dispersal of the population, while \emph{the population pressure} $au\nabla u$
prevents overcrowding. The corresponding evolution equation has the form

\begin{equation}
\label{S1}
\partial_t u -\operatorname{div} J(u) =u\left(\alpha -\frac{ u}{\beta}\right),
\end{equation}
where the right-hand side is the logistic growth term, $\alpha>0$ is the intrinsic growth
rate and $\beta>0$ is the carrying capacity.

Various generalizations of this model were proposed, from different points of view, by
Shigesada et al. \cite{shigesada79}, Busenberg and Travis \cite{busenberg83}, or
\cite{gurtin84,galiano12}, among others, and have given rise to the so-called
\emph{cross-diffusion models}. The authors of \cite{busenberg83} assume that the individual
population flow $J_i$ is proportional to the gradient of a potential function $\Psi$ which
depends only on the total population density $U=u_1+u_2$:

\[
J_i(u_1,u_2)=a\frac{u_i}{U}\nabla \Psi (U).
\]
In this model the collective flow is still given in the form \eqref{S1}:
$J(U)=a\nabla\Psi(U)$ with $c=0$. Assuming the power law $\Psi(s)=s^2/2$, we arrive at the
individual population flows given by

\begin{equation}
\notag \label{flow_gurtin}
 J_i(u_1,u_2)=a u_i \nabla U.
\end{equation}
This model was introduced by Gurtin and Pipkin \cite{gurtin84} and mathematically analyzed
by Bertsch et al. \cite{bertsch85,bertsch12}. As remarked in \cite{gurtin84}, when
considering a set of species with different characteristics, such as size, behavior with
respect to overcrowding, etc., it is natural to assume that instead of the total population
density $u_1+u_2$ the individual flows $J_i$ depend on a general linear combination of both
population densities, possibly different for each population. This assumption leads to the
following expressions for the flows:

\begin{equation}
\label{def:general_flow} J_i(u_1,u_2)=u_i  \nabla (a_{i1}u_1+a_{i2}u_2).
\end{equation}
A more general evolution problem which included the flows of this type has been analyzed in
\cite{galiano12b}. A finite element fully discretized scheme was used to prove the existence
of solutions under rather general assumptions on the data.

The present article addrresses the \emph{singular} case $a_{ij}=a_i$ for $i,j=1,2$. Due to
the loss of ellipticity of the diffusion matrix, this case is more complicated for the
study. One of the possible approaches consists in considering the \emph{contact-inhibition}
problem, see \cite{chaplain06}, assuming that the components of the solution are initially
segregated:

\begin{equation}
\label{ci.id} \operatorname{supp} u_{1}(x,0) \cup \operatorname{supp}
u_{2}(x,0)=\Omega,\quad \operatorname{supp}u_{1}(x,0)\cap
\operatorname{supp}u_{2}(x,0)=\Gamma_0,
\end{equation}
where $\Omega\subset \mathbb{R}^n$ is the problem domain and $\Gamma_0\in\Omega$ is a given
hypersurface. In the one-dimensional case $\Omega=(-L,L)$ and $\Gamma_0=x_c\in (-L,L)$. A
segregated solution $\mathbf{u}=(u_1,u_2)$ of the cross-diffusion system

\begin{equation}
\label{eq:cross-diffusion}
\partial_t\mathbf{u}=\operatorname{div}\left(\mathbf{u}\nabla ( A\cdot
\mathbf{u}^\perp)\right)+\mathbf{f}(\mathbf{u}),\quad \mathbf{f}=(f_1,f_2),
\end{equation}
with a $2\times 2$ matrix $A$, is a solution which possesses the following property:
$u_1\cdot u_2=0$ and $u_1+u_2>0$ everywhere in the problem domain (we tacitly assume here
that the solution is so regular that these conditions make sense). The problem of existence
of segregated solutions of the cross-diffusion system \eqref{eq:cross-diffusion} in the
singular case $a_{ij}=1$ for $i,j=1,2$ 
was studied by Bertsch et.al. in \cite{bertsch12}. It is proved that for suitable initial
data the Cauchy problem for system \eqref{eq:cross-diffusion} has a segregated solution. In
\cite{bertsch10} (see also \cite{bertsch85}), the existence of segregated solutions was
proved in the case $n=1$ for the system

\begin{equation}
\label{eq:cross-diffusion-1} u_{it}=a_i\left(u_i\phi_{x}(u_1+u_2)\right)+f_i(u,v),\quad
i=1,2,\quad a_i=const>0,
\end{equation}
in the rectangular domain $(-L,L)\times (0,T]$ under the zero-flux boundary conditions for
$u_1+u_2$ on the lateral boundaries. The proofs in \cite{bertsch10,bertsch12} rely on the
observation that the introduction of the new thought function $w:=u_1+u_2$ transforms
systems \eqref{eq:cross-diffusion}, \eqref{eq:cross-diffusion-1} into systems composed of a
parabolic equation for $w$ and a transport equation for the function $r:=u_2/w$ with the
velocity field defined by $\nabla w$. Apart from the possibility to show the existence of
segregated solutions, this method allowed the authors of \cite{bertsch10} to derive the
equation of motion of the curve $x=\zeta(t)$ separating the parts of the problem domain
where either $u_1>0$, or $u_2>0$. The question of uniqueness of the segregated solutions for
systems \eqref{eq:cross-diffusion}, \eqref{eq:cross-diffusion-1} was left open.

\section{Formulation of the problem and main results}

Let $\Omega\subset \mathbb{R}^{n}$ be a bounded domain. We consider the problem of finding
nonnegative functions $(u,v)$ satisfying the conditions

\begin{equation}
\label{eq:system}
\begin{cases}
& u_t=\operatorname{div}\left(a_+u\nabla (u+v)\right)+f_{+}(u,v)\quad \text{in
$D=\Omega\times (0,T]$},
\\
& v_t=\operatorname{div}\left(a_-v\nabla (u+v)\right)+f_{-}(u,v)\quad \text{in $D$},\quad
a_\pm=const>0,
\\
& \text{$u+v=h$ on $\partial\Omega\times (0,T]$},
\\
& \text{$u(x,0)=u_0(x)$, $v(x,0)=v_0(x)$ in $\Omega$}.
\end{cases}
\end{equation}
It is assumed that the initial data are smooth and segregated:

\begin{equation}
\label{eq:segregate}
\begin{cases}
& \text{$u_0\geq 0$ and $v_0\geq 0$, $u_0\cdot v_0=0$ in $\overline{\Omega}$},
\\
& \text{${C}^{-1}\leq u_0+v_0\leq C$ in $\Omega$, $\;C=const>1$},
\\
& \text{$u_0+v_0\in C^{2+\alpha}(\overline{\Omega})$}.
\end{cases}
\end{equation}
Moreover, we assume that the supports of $u_0$ and $v_0$ are separated by a smooth
simple-connected hypersurface $\Gamma_0$,

\[
\Gamma_0=\partial\overline{\{x\in \Omega:\,v_0(x)>0\}},\qquad \Gamma_0\cap \partial
\Omega=\emptyset,
\]
which means that the domain $\Omega$ is split into two parts: the annular domain $\Omega_+$,
bounded by $\partial\Omega$ and $\Gamma_0$ (where $v_0=0$, $u_0>0$), and its complement
$\Omega_-$ (where $u_0=0$, $v_0>0$).  The functions $f_{\pm}(q,r)$ are assumed to satisfy
the conditions

\begin{equation}
\label{eq:f}
\begin{cases}
& f_+(0,r)=0,\quad \text{$f_+(q,0)$ is locally Lipschitz-continuous for $q\geq 0$},
\\
& f_-(q,0)=0,\quad \text{$f_-(0,r)$ is locally Lipschitz-continuous for $r\geq 0$},
\end{cases}
\end{equation}
an example of admissible $f_{\pm}$ is furnished by the functions

\[
f_+(q,r)=q(\alpha_{+}-\beta_{+}q-\gamma_{+}r),\quad
f_-(q,r)=r(\alpha_{-}-\beta_{-}q-\gamma_{-}r),
\]
$\alpha_{\pm},\beta_{\pm},\gamma_{\pm}=const>0$. Our aim is to construct a segregated
solution of problem \eqref{eq:system}. To this end we consider the initial and boundary
value problem for function $w=u+v$. If problem \eqref{eq:system} admits a segregated
solution such that $u+v>0$ and $u\cdot v=0$ everywhere in $\overline{D}$, it is necessary
that $w$ satisfies the conditions

\begin{equation}
\label{eq:dirichlet}
\begin{cases}
& w_t=\operatorname{div}\left(a\, w\nabla w\right)+f(w)\quad \text{in $D=\Omega\times
(0,T]$},
\\
& \text{$w=h$ on $\partial\Omega\times (0,T]$},
\\
& \text{$w(x,0)=w_0:=u_0+v_0$ in $\Omega$}
\end{cases}
\end{equation}
with the coefficient $a$ and the right-hand side $f$ defined by

\begin{equation}
\label{eq:a} a=\begin{cases} a_+ & \text{if $u>0$},
\\
a_- & \text{if $v>0$},
\end{cases}
\qquad f(w)=\begin{cases} f_+(w,0) & \text{$u>0$},
\\
f_-(0,w) & \text{if $v>0$}.
\end{cases}
\end{equation}
Problem \eqref{eq:dirichlet} is regarded as the initial and boundary value problem for a
parabolic equation with discontinuous data. If there is a continuous in $D$ solution $w$,
and if there exists a continuous bijective transformation $\Gamma_0\mapsto \Gamma_t$ of the
initially given surface $\Gamma_0$, we may try to define a solution of the original problem
\eqref{eq:system} by the equalities

\[
w(x,t)=
\begin{cases}
v(x,t) & \text{in the domain $\Omega^-(t)$ bounded by $\Gamma_t$, $t\in [0,T]$},
\\
u(x,t) & \text{in the complement $\Omega^+(t)$ of $\Omega^{-}(t)$ in $\Omega$}.
\end{cases}
\]

\begin{definition}
\label{def:reform} A pair $(w,\Gamma)$ is called weak solution of problem
\eqref{eq:dirichlet} if

\begin{enumerate}
\item $\Gamma$ is a $C^{1}$ hypersurface, the mapping $\Gamma_0\mapsto \Gamma_t=\Gamma\cap
\{t=const\}$ is a bijection for $t\in [0,T]$,

\item $\forall\;t\in [0,T]$ the surface $\Gamma_t$ is the common boundary of the domains
$\Omega^{\pm}(t)$,

\[
\Omega=\Omega^+(t)\cup \Gamma_t\cup \Omega^-(t),
\]
where $\Omega^{+}(t)$ is an annular domain bounded by $\partial\Omega$ and $\Gamma_t$,
$\Omega^{-}(t)$ is the complement of $\Omega^{+}(t)$ in $\Omega$,

\item  $w\in C^{0}(\overline{D})\cap L^{2}(0,T;H^{1}(\Omega^{\pm}(t)))$,

\item for every $\phi(x,t)\in C^{1}(\overline{D})$, such that $\phi(x,T)=0$, $\phi=0$ on
$\partial\Omega\times [0,T]$,

\begin{equation}
\label{eq:def} \int_{D}\left(w\phi_t-a w \nabla w\cdot \nabla
\phi+f(w)\,\phi\right)\,dxdt+\int_{\Omega}w_0\phi(x,0)dx=0.
\end{equation}

\end{enumerate}
\end{definition}

To construct a solution of problem (\ref{eq:dirichlet}) we proceed in two steps. The first
step consists in the direct construction of the surface $\Gamma$ and the corresponding
solution $w$ in a vicinity of $\Gamma$. This is done by means of a special coordinate
transformation similar to introduction of a system of Lagrangian coordinates frequently used
in continuum mechanics. Once the local solution is constructed, we continue it to the rest
of the problem domain and then check that this continuation is the thought solution of
problem \eqref{eq:dirichlet}.

\begin{theorem}[Local in time existence-1]
\label{th:lagrange} Let conditions \eqref{eq:segregate}, \eqref{eq:f} be fulfilled. Assume
that the data of problem \eqref{eq:dirichlet} satisfy the following conditions:

\begin{enumerate}
\item $\partial\Omega,\,\Gamma_0\in C^{2+\alpha}$, $w_{0}\in C^{2+\alpha}(\Omega)$ with some
$\alpha\in (0,1)$,

\item $\Gamma_0$ is a level surface of $w_0$, 

\item $h(x,t)>0$ on $\partial\Omega\times [0,T]$, $h(x,t)$ and $w_0(x)$ satisfy the
first-order compatibility conditions on $\partial\Omega\times \{t=0\}$.
\end{enumerate}
Then for every $\Phi(t)\in C^{1}[0,T]$

\begin{enumerate}
\item  there exists $T^\ast\leq T$ such that in the cylinder $\Omega\times (0,T^\ast]$
problem \eqref{eq:dirichlet} has a solution $w(x,t)$ in the sense of Definition
\ref{def:reform}, which satisfies the condition  $w=\Phi(t)$ on $\Gamma_t$,

\item the solution $w$ represents the segregated solution $(u,v)$ of system
\eqref{eq:system}: $w=u+v$, $u\equiv 0$ in $\Omega^-(t)\times [0,T^\ast]$, $v\equiv 0$ in
$\Omega^+(t)\times [0,T^\ast]$.
\end{enumerate}
\end{theorem}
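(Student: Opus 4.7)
The plan is to straighten the moving interface $\Gamma_t$ into the fixed reference surface $\Gamma_0$ by a Lagrangian-type change of variables, exactly as announced at the end of Section~2, and then to solve the resulting transmission problem on a fixed domain by Schauder theory combined with a contraction argument. Concretely, I would fix a tubular neighborhood $\mathcal{N}$ of $\Gamma_0$ and a $C^{2+\alpha}$ unit normal field $\nu$ on $\Gamma_0$, and seek a family of diffeomorphisms $X(\cdot,t):\overline{\Omega}\to\overline{\Omega}$ with $X(\xi,0)=\xi$, $X|_{\partial\Omega}=\operatorname{id}$, $X(\Gamma_0,t)=\Gamma_t$, of the form $X(\xi,t)=\xi+\sigma(\xi,t)\nu(\xi)$ on $\mathcal{N}$ and smoothly extended to $\Omega$, so that the geometric unknown reduces to a single scalar field $\sigma$ on $\Gamma_0\times[0,T^{\ast}]$. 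Setting $\tilde w(\xi,t):=w(X(\xi,t),t)$, the equation in \eqref{eq:dirichlet} pulls back, on each fixed subdomain $\Omega^{\pm}:=\Omega^{\pm}(0)$, to a quasilinear parabolic equation whose coefficients depend smoothly on $\sigma$, $\nabla\sigma$ and $\sigma_t$; since $w_0\ge C^{-1}>0$, the transformed equation is uniformly parabolic on a sufficiently short time interval.

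Differentiating the target condition $\tilde w(\xi,t)=\Phi(t)$ on $\Gamma_0$ in $t$ and substituting the one-sided limits of \eqref{eq:dirichlet} with coefficients $a_{\pm}$ yields an explicit first-order evolution law for $\sigma$ in terms of the one-sided normal derivatives $\partial_{\nu}\tilde w|_{\Gamma_0^{\pm}}$ and of $\Phi'(t)$; this is the Darcy-type law for the free boundary promised in the abstract. Together with the Dirichlet conditions $\tilde w=\Phi(t)$ on $\Gamma_0$ and $\tilde w=h$ on $\partial\Omega$, the initial datum $\tilde w(\xi,0)=w_0(\xi)$, and the matching of the normal flux $a\,w\,\partial_n w$ across $\Gamma_t$, this closes the system for the pair $(\tilde w,\sigma)$.

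Existence is then obtained by a Banach fixed-point iteration in parabolic H\"older spaces $C^{2+\alpha,1+\alpha/2}$. Given a trial $\sigma$ in a small ball around $\sigma\equiv 0$, I would solve the two one-sided quasilinear Dirichlet problems on $\Omega^{\pm}$ by the Ladyzhenskaya--Solonnikov--Ural'tseva Schauder theory, applicable thanks to uniform parabolicity and the first-order compatibility between $w_0$, $h$, and $\Phi(0)=w_0|_{\Gamma_0}$ (which is guaranteed by hypothesis (2), after a harmless initial shift of $\Phi$ if needed, together with hypothesis (3) on $h$); then update $\sigma$ by integrating the Darcy-type relation. For $T^{\ast}>0$ small the map is a contraction, producing $\tilde w$ of class $C^{2+\alpha,1+\alpha/2}$ on each $\Omega^{\pm}$ and $\sigma\in C^{2+\alpha,1+\alpha/2}(\Gamma_0\times[0,T^{\ast}])$; in particular $w\in C^{0}(\overline D)$ and $\Gamma$ is a $C^{1}$ (in fact smoother) hypersurface satisfying items (1)--(3) of Definition~\ref{def:reform}.

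The principal difficulty is precisely this coupling: the Darcy law for $\sigma$ is only first-order in time but is driven by boundary traces of $\nabla\tilde w^{\pm}$, so propagating spatial regularity through the iteration requires a careful choice of function spaces and, if needed, an intermediate mollification of the reference flow. Once the fixed point is produced, item (4) of Definition~\ref{def:reform} follows by integrating by parts on $\Omega^{+}(t)$ and $\Omega^{-}(t)$ separately: continuity of $w$ and of $a\,w\,\partial_n w$ across $\Gamma_t$ cancels the interface contributions. For the segregation assertion, set $u:=w\chi_{\Omega^{+}(t)}$ and $v:=w\chi_{\Omega^{-}(t)}$; then $u+v=w$ and $u\cdot v=0$, and by \eqref{eq:f} the would-be source terms $f_{+}(0,v)$ on $\Omega^{+}(t)$ and $f_{-}(u,0)$ on $\Omega^{-}(t)$ vanish identically, so each equation of \eqref{eq:system} reduces on its own side to the scalar equation \eqref{eq:dirichlet} for $w$ with the corresponding $a_{\pm}$ and $f_{\pm}$, which is satisfied by construction.
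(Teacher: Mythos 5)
Your plan---straighten $\Gamma_t$ to $\Gamma_0$ by a Lagrangian change of variables, solve the resulting fixed-domain transmission problem, iterate---is in the same spirit as the paper's, but the execution differs in essential ways. The paper does not parametrize $X$ by a normal graph $\xi+\sigma\nu$; it uses the potential-flow ansatz $X(y,t)=y+\nabla_yU(y,t)$ and, crucially, eliminates $w$ as a PDE unknown by imposing the Lagrangian conservation law $\tilde w|J|=w_0$. The only evolution unknown is then the potential $U$, which satisfies the single fourth-order equation $\mathcal{L}(U,\tilde p)=0$ of \eqref{eq:+}, solved in $W^4_q$ by a modified Newton scheme rather than in parabolic H\"older spaces by a Banach contraction; the reaction term is handled by an auxiliary pressure $p$ solving the elliptic problem \eqref{eq:ellip-p}; and the whole construction is carried out only in small annular neighborhoods $\omega^\pm(0)$ of $\Gamma_0$, after which the solution is extended to all of $\Omega$ by solving the classical parabolic problem \eqref{eq:A}. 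In particular, the paper's interface law (Theorem \ref{th:Darcy}) is $\mathbf{v}\cdot\mathbf{n}_x=(-a_+\nabla u+\nabla p)\cdot\mathbf{n}_x$, which is nonlocal through $p$; it is not a local first-order law in the one-sided normal derivatives of $w$.

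The genuine gap in your proposal is the derivation of the ``Darcy law'' for $\sigma$. You obtain it by differentiating $\tilde w(\xi,t)=\Phi(t)$ in $t$ on $\Gamma_0$ and substituting the one-sided equations. But once you impose the Dirichlet data $\tilde w^\pm=\Phi(t)$ on $\Gamma_0$ in both pulled-back parabolic problems, this time-differentiation is a tautology---it holds identically for \emph{any} admissible $\sigma$---so it cannot be the equation that determines $\sigma$. Moreover, the identity you get involves $\operatorname{div}(a_\pm w\nabla w)$ at the interface, i.e.\ second-order traces, not only $\partial_\nu\tilde w|_{\Gamma_0^\pm}$ as you state, so even if used as an update rule it loses a derivative and the contraction estimate in $C^{2+\alpha,1+\alpha/2}$ will not close without extra structure. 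You also list flux continuity of $a\,w\,\partial_nw$ across $\Gamma_t$ as a separate ingredient; counting conditions, you then impose two Dirichlet conditions, one flux condition, and a Darcy law on $\Gamma_t$, which is one scalar relation too many and leaves it unclear which equation actually drives $\sigma$. The paper sidesteps this exactly because $w$ is eliminated by the conservation law and the system \eqref{eq:split-U}--\eqref{eq:split-p} for $(U,\tilde p)$ closes with the single interface condition $|J|=\Phi(0)/\Phi(t)$ on $\Gamma_0$ together with the jump condition on $\nabla p$; the interface velocity is then read off a posteriori. To repair your route you would need to isolate the one scalar closure (presumably the flux-matching relation), discard the redundant ones, and show that the remaining conditions propagate; as written, the argument does not close.
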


The method of construction allows us to present the surface $\Gamma$ explicitly and to
derive the equation of motion of $\Gamma_t$, which is similar to the Darcy law in filtration
theory.

\begin{theorem}[The interface equation]
\label{th:Darcy} Under the conditions of Theorem \ref{th:lagrange} there exists an annular
domain $\omega^{+}(0)$, bounded by $\Gamma_0$ and a smooth hypersurface
$\partial\omega^{+}(0)$, $\partial\omega^{+}(0)\cap \partial\Omega=\emptyset$,
$\partial\omega^{+}(0)\cap \Gamma_0=\emptyset$, and a function $U(y,t)$ such that

\[
U\in W^{4}_{q}(\omega^{+}(0)\times [0,T^\ast]),\quad U_t\in W^{2}_{q}(\omega^{+}(0)\times
[0,T^\ast]),\quad \text{$U(y,0)=0$ in $\overline{\omega}^{+}(0)$}
\]
with some $q>n+2$, and $\Gamma$ is parametrized by the equalities

\[
\Gamma=\{(x,t):\,x=y+\nabla U(y,t),\,y\in \Gamma_0\},\quad t\in [0,T^\ast].
\]
Moreover, the velocity of advancement of the surface $\Gamma_t$ in the normal direction
$\mathbf{n}_x$ is defined by the equation

\begin{equation}
\label{eq:Darcy} \mathbf{v}\cdot \mathbf{n}_x=(-a_+\nabla u+\nabla p)\cdot
\mathbf{n}_x|_{\Gamma_t}\quad (\text{the modified Darcy law}),
\end{equation}
where $p$ is a solution of the elliptic equation

\[
\begin{cases}
& \text{$\operatorname{div}\left(u\nabla p\right)=f_{+}(u)$ in
$\omega^{+}(t)=\{x\in\Omega:\,x=y+\nabla U,\,y\in \omega^{+}(0)\}$},
\\
& \text{$p=0$ on $\Gamma_{t}$ and $\partial\omega^{+}(t)$}.
\end{cases}
\]
\end{theorem}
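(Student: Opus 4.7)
The plan is to make explicit the Lagrangian construction underlying Theorem~\ref{th:lagrange} and to extract from it the kinematic equation for the interface. First I would choose $\omega^{+}(0)$ as a tubular neighbourhood of $\Gamma_0$ contained in the set $\{u_0>0\}$, thin enough that $u_0\geq\delta>0$ on $\overline{\omega^{+}(0)}$ and that its outer boundary $\partial\omega^{+}(0)$ is a smooth hypersurface disjoint from $\partial\Omega$ and $\Gamma_0$. Since Theorem~\ref{th:lagrange} provides $w\in C^{0}(\overline{D})$ with $w>0$ and $u=w$ on the $u$-side of $\Gamma$, a continuity argument guarantees that, possibly after shrinking $T^{\ast}$, one has $u\geq\delta/2$ uniformly on the Eulerian image $\omega^{+}(t)$ for all $t\in[0,T^{\ast}]$.

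Next I would introduce the auxiliary pressure $p(x,t)$ as the unique solution of the Dirichlet problem
\[
\operatorname{div}(u\,\nabla p)=f_{+}(u)\quad\text{in }\omega^{+}(t),\qquad p=0\quad\text{on }\Gamma_{t}\cup\partial\omega^{+}(t).
\]
Because $u$ is bounded and bounded away from zero on $\omega^{+}(t)$, the operator is uniformly elliptic and $p$ inherits the spatial regularity of $u$ through Schauder/$L^{q}$ theory. Setting $\mathbf{v}(x,t):=-a_{+}\nabla u+\nabla p$, a direct computation using $u_{t}=\operatorname{div}(a_{+}u\nabla u)+f_{+}(u)$ yields the conservative form $u_{t}+\operatorname{div}(u\mathbf{v})=0$ in $\omega^{+}(t)$, which identifies $\mathbf{v}$ as the natural Lagrangian velocity associated with $u$.

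The heart of the argument is the construction of $U$. Since $\mathbf{v}=\nabla(-a_{+}u+p)$ is a gradient in Eulerian coordinates, the ansatz $x=y+\nabla_{y}U(y,t)$ with $U(y,0)=0$ converts the Lagrangian flow equation into the nonlinear pseudo-parabolic system
\[
\nabla_{y}U_{t}(y,t)=\mathbf{v}\bigl(y+\nabla_{y}U(y,t),\,t\bigr)\quad\text{in }\omega^{+}(0)\times(0,T^{\ast}],
\]
complemented by $U=0$ on $\Gamma_0\cup\partial\omega^{+}(0)$; both components of $\partial\omega^{+}(0)$ are flow lines, $\Gamma_{0}$ by hypothesis (2) of Theorem~\ref{th:lagrange} and $\partial\omega^{+}(0)$ if chosen as a suitable level surface of $w_{0}$. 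I would solve the system by a Banach contraction argument in $\{U\in W^{4}_{q}(\omega^{+}(0)\times[0,T^{\ast}]):U_{t}\in W^{2}_{q}\}$ with $q>n+2$, combining elliptic $L^{q}$ estimates for the linearisation of the $p$-problem with the parabolic regularity of $u$ inherited from Theorem~\ref{th:lagrange}; the embedding $W^{4}_{q}\hookrightarrow C^{3}$ ensures that the composition $\mathbf{v}(y+\nabla U,t)$ is meaningful and appropriately controlled, and smallness of $T^{\ast}$ is used to make the self-map a strict contraction.

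Finally, the interface equation is read off directly: for every $y\in\Gamma_{0}$ the curve $t\mapsto x(y,t)=y+\nabla_{y}U(y,t)$ lies on $\Gamma_{t}$ and its Eulerian velocity is $\partial_{t}x(y,t)=\mathbf{v}(x(y,t),t)$, so projecting onto the unit normal $\mathbf{n}_{x}$ to $\Gamma_{t}$ produces \eqref{eq:Darcy}. The principal obstacle, in my view, is closing the fixed-point step: the operator $U\mapsto$ new iterate loses derivatives through the composition $\mathbf{v}(y+\nabla U,t)$, and this loss must be absorbed by the smoothing of both the parabolic symbol governing $U_{t}$ and the elliptic operator defining $p$, which forces the particular functional setting $W^{4}_{q}\times W^{2}_{q}$ and, if necessary, a further reduction of $T^{\ast}$.
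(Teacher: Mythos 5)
Your plan correctly identifies the high-level structure of the paper's construction (Lagrangian coordinates, the potential ansatz $x = y + \nabla_y U$, the elliptic problem for $p$, and reading the interface velocity off the parametrization), but the central equation you propose to solve for $U$ is not the one the paper uses, and it is in fact overdetermined. You write the Lagrangian flow as the vector system $\nabla_y U_t(y,t) = \mathbf{v}\bigl(y+\nabla_y U(y,t),t\bigr)$, which is $n$ equations for a single scalar unknown. The right-hand side is a gradient \emph{in $x$}, not in $y$: the chain rule gives $\nabla_y\bigl[(-a_+u+p)(y+\nabla U,t)\bigr] = J\,\mathbf{v}$, where $J = I + H(U)$, so the composed field $\mathbf{v}(y+\nabla U,t) = J^{-1}\nabla_y\widetilde\Phi$ is generally not curl-free in $y$. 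There is no consistent scalar $U$ solving that system. The paper's framework never imposes $X_t = \mathbf{v}$; it only requires the weaker scalar divergence condition $\operatorname{div}_x\bigl(w(X_t - \mathbf{v})\bigr) = 0$ (equation (\ref{eq:div})), which, after the potential ansatz, becomes the fourth-order scalar equation $\mathcal{L}(U,\widetilde p) = \operatorname{div}_y\bigl(J\nabla_y U_t + a\nabla_y(w_0|J|^{-1}) - \nabla_y\widetilde p\bigr) = 0$ in (\ref{eq:+}), coupled with (\ref{eq:++}). The equality $X_t\cdot\mathbf{n}_x = \mathbf{v}\cdot\mathbf{n}_x$ on $\Gamma_t$, which is all that the interface equation (\ref{eq:Darcy}) asserts, is then a boundary consequence of the construction rather than a pointwise identity in the bulk.

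The loss-of-derivatives worry you raise at the end is a genuine obstacle precisely because your iteration acts on the first-order composition $\mathbf{v}(y+\nabla U,t)$ with no intrinsic smoothing; a Banach contraction in $W^4_q\times W^2_q$ does not close there. The paper avoids this: it never composes with a separately-given Eulerian $u$ (the density is \emph{defined} in Lagrangian coordinates by the conservation law $\widetilde w = w_0|J|^{-1}$, so the only unknowns are $U$ and $\widetilde p$), and it linearizes the scalar operator $\mathcal{L}$ to obtain $\Delta\bigl(U_t - a_+w_0\Delta U\bigr) = \Delta g$ (Lemma \ref{th:par-linear}), whose fourth-order parabolic smoothing is exactly what makes the modified Newton iteration of Theorem \ref{th:Newton} converge in $\mathcal{Z}^+$. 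Finally, note that in the paper the whole content of Theorem \ref{th:Darcy} is already produced en route to Theorem \ref{th:lagrange}: the function $U$ and the parametrization of $\Gamma$ come out of Theorems \ref{th:lagr-euler} and \ref{th:nonlinear-+}, so the proof is a one-line reference to the construction, not a fresh derivation of $U$ from the solution $u$ delivered by Theorem \ref{th:lagrange} as your plan suggests.
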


\begin{corollary}
\label{cor:indep} The components $u$ and $v$ of the solution $w=u+v$ to problem
\eqref{eq:dirichlet} constructed in Theorem \ref{th:lagrange} can be characterized in the
following way:

\begin{enumerate}
\item $u,\,v\in L^{\infty}(D)$, $u\geq 0,\,v\geq 0$ in $D$,

\item $u+v\in C^{0}(\overline{D})$, $u+v\in L^{2}(0,T;H^{1}(\Omega))$,

\item for every test-function $\phi\in C^{1}(\overline{D})$, $\phi(x,T)=0$, $\phi=0$ on
$\partial\Omega\times [0,T]$,

\begin{equation}
\label{eq:ind-1} \int_{D}\left(u\phi_t-a_+ u \nabla (u+v)\cdot \nabla
\phi+f_+(u)\,\phi\right)\,dxdt+\int_{\Omega}u_0\phi(x,0)dx=0, \end{equation}

\begin{equation}
\label{eq:ind-2} \int_{D}\left(v\phi_t-a_- v \nabla (u+v)\cdot \nabla
\phi+f_-(v)\,\phi\right)\,dxdt+\int_{\Omega}v_0\phi(x,0)dx=0
\end{equation}
\end{enumerate}
(cf. with Definition 3.1 in \cite{bertsch12}). The proof of this assertion is given in the
end of Section \ref{sec:proofs}.
\end{corollary}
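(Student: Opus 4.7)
Plan: The strategy is to read off each of the three properties from the structural characteristics of the solution $w$ built in Theorem~\ref{th:lagrange}, using the representation $u=w\cdot\chi_{D^+}$, $v=w\cdot\chi_{D^-}$, where $D^{\pm}:=\bigcup_{t\in[0,T^\ast]}\Omega^\pm(t)\times\{t\}$. Properties~(1) and~(2) are essentially bookkeeping. A comparison argument for~\eqref{eq:dirichlet} propagates the bound $C^{-1}\le w\le C$ throughout $\overline{D}$, giving $0\le u,v\le w\in L^\infty(D)$. Continuity of $u+v=w$ on $\overline{D}$ is part of Definition~\ref{def:reform}, and since $w\in L^2(0,T^\ast;H^1(\Omega^\pm(t)))$ with matching traces on the $C^1$ hypersurface $\Gamma_t$ from both sides, the two pieces glue into $u+v=w\in L^2(0,T^\ast;H^1(\Omega))$.

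For property~(3), I would first localize~\eqref{eq:ind-1} to $D^+$. Because $u\equiv 0$ in $D^-$, $v\equiv 0$ in $D^+$, and $f_+(0,\cdot)\equiv 0$ by~\eqref{eq:f}, the integrand in~\eqref{eq:ind-1} vanishes on $D^-$, and the identity becomes
\[
\int_{D^+}\!\bigl[w\phi_t-a_+ w\nabla w\cdot\nabla\phi+f_+(w,0)\phi\bigr]\,dx\,dt+\int_{\Omega^+(0)} w_0\,\phi(x,0)\,dx=0.
\]
By Theorems~\ref{th:lagrange} and~\ref{th:Darcy}, $w$ is a classical solution of $w_t=\operatorname{div}(a_+ w\nabla w)+f_+(w,0)$ on the open set $D^+$, and the moving portion $\Sigma:=\bigcup_t\Gamma_t\times\{t\}$ of $\partial D^+$ is explicitly parametrized. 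Applying the space-time divergence theorem to the field $F=(-a_+ w\nabla w,\,w)$ on $D^+$, so that $\operatorname{div}_{x,t}F=f_+(w,0)$ by the PDE, turns the bulk integral into surface contributions over $\partial D^+$: the initial slice cancels $\int_{\Omega^+(0)} w_0\,\phi(x,0)\,dx$, the lateral part on $\partial\Omega\times[0,T^\ast]$ vanishes because $\phi|_{\partial\Omega}=0$, and the final slice $\Omega^+(T^\ast)\times\{T^\ast\}$ disappears in view of $\phi(x,T^\ast)=0$.

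The main technical obstacle is to show that the remaining integral on $\Sigma$ vanishes for every admissible $\phi$. This interface term takes the form $\int_\Sigma w\,[\tau-a_+\nabla w\cdot\nu]\,\phi\,dS$, where $(\nu,\tau)$ is the outward space-time unit normal to $D^+$ on $\Sigma$ and $\nabla w$ is taken from the $D^+$-side; its vanishing amounts to the transmission condition $v_n=-a_+\nabla w\cdot\mathbf{n}_x|_{\Gamma_t^+}$, i.e., the front advances with the $D^+$-side Darcy velocity. This condition is precisely what is encoded in the Lagrangian parametrization $\Gamma_t=\{y+\nabla U(y,t):y\in\Gamma_0\}$ of Theorem~\ref{th:Darcy} and can be read off from the interface equation~\eqref{eq:Darcy}, which is the main leverage used here. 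The symmetric argument on $D^-$ then yields~\eqref{eq:ind-2}, and summing~\eqref{eq:ind-1} and~\eqref{eq:ind-2} recovers~\eqref{eq:def}, providing an internal consistency check.
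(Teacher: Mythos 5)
The road you take on items (1)--(2) is the same as the paper's (read them off the structure of $w$ from Theorem~\ref{th:lagrange}) and is fine. For item (3), however, your route is genuinely different from the paper's, and it has a gap.

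The paper does not apply the space--time divergence theorem to $F=(-a_+w\nabla w,\,w)$ on $D^+$. Instead, it starts from the Lagrangian weak identity of Lemma~\ref{le:div-div} on the moving cylinder $\mathcal{C}^+\setminus\mathcal{A}$, which produces
$\int u\big(\phi_t+\nabla\phi\cdot\mathbf{v}\big)$ with $\mathbf{v}=-a_+\nabla u+\nabla p$; the second contribution $\int u\,\nabla\phi\cdot\nabla p$ is then converted into $\int f_+(u)\,\phi$ by integrating by parts and invoking the elliptic equation $\operatorname{div}(u\nabla p)=f_+(u)$ in \eqref{eq:ellip-p}. Then the remaining annulus $\mathcal{A}$ is handled by the classical solution of the auxiliary problem~\eqref{eq:A}, and the two pieces are glued along $S$, where the flux terms cancel. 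The interface $\Gamma_t$ carries no boundary contribution precisely because the normal Lagrangian and Eulerian velocities agree there, $(X_t-\mathbf{v})\cdot\mathbf{n}_x=0$, so the auxiliary pressure $p$ is indispensable: it is what turns the $\nabla p$ part of the transport velocity into the source term.

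Your divergence-theorem argument, by contrast, bundles $f_+(w,0)$ directly into $\operatorname{div}_{x,t}F$ and never touches $p$. The interface contribution you then obtain is $-\int_\Sigma w\phi\bigl(a_+\nabla w\cdot\mathbf{n}_x+V_n\bigr)$ (up to a geometric normalization), which vanishes if and only if the Rankine--Hugoniot relation $V_n=-a_+\nabla w\cdot\mathbf{n}_x$ holds. You assert that this is exactly what is ``encoded in'' and ``can be read off from'' the interface equation~\eqref{eq:Darcy}, but that misreads Theorem~\ref{th:Darcy}: the modified Darcy law there is $V_n=\mathbf{v}\cdot\mathbf{n}_x=(-a_+\nabla u+\nabla p)\cdot\mathbf{n}_x$, with the extra $\nabla p$ term. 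Since $p$ is prescribed by a Dirichlet, not Neumann, condition on $\Gamma_t$, there is no reason for $\nabla p\cdot\mathbf{n}_x$ to vanish there; the residual interface term $\int_\Sigma w\phi\,\nabla p\cdot\mathbf{n}_x$ is therefore left open by your argument. This is the genuine gap: your scheme only closes if you additionally account for $\nabla p$, which requires bringing in the elliptic problem for $p$ exactly as the paper does --- at which point the argument is no longer a shortcut around the paper's use of $p$, but a reproduction of it.

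Incidentally, in step (2) of your plan you claim continuity of $w\in L^2(0,T;H^1(\Omega^\pm(t)))$ plus matching traces on $\Gamma_t$ gives $w\in L^2(0,T;H^1(\Omega))$; this gluing is standard but deserves at least a sentence, because the spatial domains $\Omega^\pm(t)$ are time-dependent and the trace match on $\Gamma_t$ comes from $w\in C^0(\overline D)$ rather than from the $H^1$ structure itself.
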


\begin{theorem}[Nonuniqueness]
\label{th:nonuniqueness} Under the conditions of Theorem \ref{th:lagrange} the segregated
solution of probem \eqref{eq:system} is not unique.
\end{theorem}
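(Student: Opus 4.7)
The nonuniqueness is an immediate by-product of the auxiliary parameter $\Phi(t)\in C^{1}[0,T]$ that enters the statement of Theorem~\ref{th:lagrange}: that result produces one segregated solution of \eqref{eq:system} for each admissible $\Phi$. The plan is thus to instantiate Theorem~\ref{th:lagrange} twice, with two compatible but distinct choices of $\Phi$, and to verify that the two resulting pairs $(u,v)$ are genuinely different.

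First I would set $c_{0}:=w_{0}|_{\Gamma_{0}}$, which is a well-defined constant because hypothesis~2 of Theorem~\ref{th:lagrange} requires $\Gamma_{0}$ to be a level surface of $w_{0}$. I would then pick two functions $\Phi_{1},\Phi_{2}\in C^{1}[0,T]$ sharing the initial value $\Phi_{1}(0)=\Phi_{2}(0)=c_{0}$ (forced by the continuity of $w$ at $t=0$ built into Definition~\ref{def:reform}) but with $\Phi_{1}(t)\neq\Phi_{2}(t)$ for all $t$ in some interval $(0,\tau]$; a concrete example is $\Phi_{1}\equiv c_{0}$ and $\Phi_{2}(t)=c_{0}+t$. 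Applying Theorem~\ref{th:lagrange} to each $\Phi_{i}$ produces, for $i=1,2$, a time $T^{\ast}_{i}>0$, a moving interface $\Gamma_{t}^{(i)}$, subdomains $\Omega^{\pm}_{(i)}(t)$, and a solution $w^{(i)}$ of \eqref{eq:dirichlet} satisfying $w^{(i)}|_{\Gamma_{t}^{(i)}}=\Phi_{i}(t)$. Setting $T^{\ast}:=\min\{T^{\ast}_{1},T^{\ast}_{2}\}>0$ and invoking part~2 of Theorem~\ref{th:lagrange} together with Corollary~\ref{cor:indep}, the pair
\[
u^{(i)}:=w^{(i)}\,\chi_{\Omega^{+}_{(i)}(t)},\qquad v^{(i)}:=w^{(i)}\,\chi_{\Omega^{-}_{(i)}(t)}
\]
is a segregated weak solution of \eqref{eq:system} on $D^{\ast}:=\Omega\times(0,T^{\ast}]$ for each $i=1,2$.

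I would then establish distinctness by contradiction. Assume $(u^{(1)},v^{(1)})=(u^{(2)},v^{(2)})$ almost everywhere in $D^{\ast}$. Because $w^{(i)}=u^{(i)}+v^{(i)}$ is continuous and strictly positive on $\overline{D^{\ast}}$, and because $\Omega^{+}_{(i)}(t)=\{u^{(i)}(\cdot,t)>0\}$ and $\Omega^{-}_{(i)}(t)=\{v^{(i)}(\cdot,t)>0\}$, equality of the pairs forces $\Omega^{\pm}_{(1)}(t)=\Omega^{\pm}_{(2)}(t)$ for every $t\in[0,T^{\ast}]$. Their common boundaries must then coincide, $\Gamma_{t}^{(1)}=\Gamma_{t}^{(2)}=:\Gamma_{t}$, and $w^{(1)}=u^{(1)}+v^{(1)}=u^{(2)}+v^{(2)}=w^{(2)}$ throughout $\overline{D^{\ast}}$. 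Restricting this identity to the common interface yields $\Phi_{1}(t)=w^{(1)}|_{\Gamma_{t}}=w^{(2)}|_{\Gamma_{t}}=\Phi_{2}(t)$ for every $t\in[0,T^{\ast}]$, contradicting the choice of $\Phi_{1}$ and $\Phi_{2}$.

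The only delicate point is preliminary: one must confirm that any $C^{1}$ function $\Phi$ with $\Phi(0)=c_{0}$ is indeed admissible in Theorem~\ref{th:lagrange}, i.e.\ that the Lagrangian construction underlying that theorem imposes no hidden restrictions on $\Phi$ (such as a sign condition on $\dot\Phi$ or additional compatibility beyond the value at $t=0$). Once this admissibility is in hand, the one-parameter freedom in $\Phi$ translates, via the construction and the contradiction argument above, into a one-parameter family of genuinely distinct segregated solutions of \eqref{eq:system}, and Theorem~\ref{th:nonuniqueness} follows.
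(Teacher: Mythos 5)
Your proof is correct and follows exactly the route the paper itself takes: nonuniqueness is read off from the freedom to choose $\Phi$ in Theorem~\ref{th:lagrange}. You go somewhat further than the paper's one-line justification by spelling out a contradiction argument showing that distinct admissible $\Phi_1,\Phi_2$ really do produce distinct solutions, which the paper simply asserts.
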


The assertion of Theorem \ref{th:nonuniqueness} is an immediate byproduct of Theorem
\ref{th:lagrange}. Indeed: given $u_0$, $v_0$ and a level surface $\Gamma_0$ of the function
$w_0=u_0+v_0$, for every smooth $\Phi(t)$ such that $\Phi(0)=w_0|_{\Gamma_0}$ we obtain a
new solution of problem \eqref{eq:system} corresponding to the same initial data and
satisfying the condition $w=\Phi(t)$ on $\Gamma_t$.

The assumptions that $w=\Phi(t)$ on $\Gamma_t$ and that $\Gamma_0$ is a level surface of
$w_0=u_0+v_0$ are not essential for the proof of Theorem \ref{th:lagrange} and were included
in order to make evident nonuniqueness of segregated solutions of problem
\eqref{eq:dirichlet}.

\begin{theorem}[Local in time existence-2]
\label{th:lagrange-1} Let conditions \eqref{eq:segregate}, \eqref{eq:f} be fulfilled. Assume
that the data of problem \eqref{eq:dirichlet} satisfy the following conditions:
\begin{enumerate}
\item $\partial\Omega,\,\Gamma_0\in C^{2+\alpha}$, $w_{0}\in C^{2+\alpha}(\Omega)$ with some
$\alpha\in (0,1)$, \item $h(x,t)>0$, $h$ and $w_0$ satisfy the first-order compatibility
conditions on $\partial\Omega\times \{t=0\}$.
\end{enumerate}
Then there exists $T^\ast\leq T$ such that in the cylinder $\Omega\times (0,T^\ast]$ problem
\eqref{eq:dirichlet} has a solution in the sense of Definition \ref{def:reform}. The
solution $w$ of problem \eqref{eq:dirichlet} represents the segregated solution $(u,v)$ of
system \eqref{eq:system}: $w=u+v$, $u\equiv 0$ in $\Omega^-(t)\times [0,T^\ast]$, $v\equiv
0$ in $\Omega^+(t)\times [0,T^\ast]$. Moreover, for the interface of the constructed
solution Theorem \ref{th:Darcy} holds.
\end{theorem}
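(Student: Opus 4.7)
The plan is to deduce Theorem \ref{th:lagrange-1} by a minor modification of the construction used for Theorem \ref{th:lagrange}, dropping the two ancillary hypotheses (that $\Gamma_0$ is a level surface of $w_0$ and that $w=\Phi(t)$ on $\Gamma_t$). As indicated in the discussion after Theorem \ref{th:nonuniqueness}, these hypotheses were introduced solely to expose nonuniqueness and play no structural role in the existence proof. So the task is to isolate where they were used, replace them with the natural interface conditions inherited from Definition \ref{def:reform}, and verify that the Lagrangian scheme still closes.

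First I would retrace the construction in Theorem \ref{th:lagrange}: a tubular neighbourhood $\omega^+(0)\cup\omega^-(0)$ of $\Gamma_0$ is introduced, the free surface is represented parametrically as $x=y+\nabla U(y,t)$ with $y\in\Gamma_0$, and \eqref{eq:dirichlet} is rewritten in the fixed domain $(\omega^+(0)\cup\omega^-(0))\times[0,T^\ast]$ as a coupled quasilinear parabolic system for $(w,U)$. The interface contributes two transmission conditions: continuity of $w$ across $\Gamma_t$, and the jump relation $(a_+u-a_-v)\,\partial_{\mathbf n}w|_{\Gamma_t}$ that yields the Darcy-type evolution law of Theorem \ref{th:Darcy}. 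The \emph{only} point at which the level-surface hypothesis is used in Theorem \ref{th:lagrange} is to legitimately prescribe the trace $w|_{\Gamma_t}=\Phi(t)$; without it, the trace is not prescribed but determined from the coupled system.

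Accordingly, the main step is to replace the Dirichlet-type interface condition $w=\Phi(t)$ on $\Gamma_t$ with the full transmission system that expresses continuity of $w$ and of the normal flux of $w$ from both sides of $\Gamma_t$. Pulled back to $\Gamma_0$ via $y\mapsto y+\nabla U(y,t)$, this becomes a nonlinear and nonlocal boundary coupling between $w$ and $U$ on the fixed surface $\Gamma_0$. I would then run exactly the same linearize-and-iterate scheme of Theorem \ref{th:lagrange}: Schauder / parabolic $W^{4}_{q}$ estimates (with $q>n+2$ as in Theorem \ref{th:Darcy}) for the linear step, followed by a contraction argument on a short time interval $[0,T^\ast]$. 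Positivity $w>0$ follows from the maximum principle applied to \eqref{eq:dirichlet}.

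Once $(w,\Gamma,U)$ is obtained in the tube, the continuation to $\Omega$ and the identification with a segregated pair $(u,v)$ satisfying \eqref{eq:ind-1}--\eqref{eq:ind-2} proceeds verbatim as in the proofs of Theorem \ref{th:lagrange} and Corollary \ref{cor:indep}: one solves the scalar equations in $\Omega^\pm(t)$ with zero trace on $\Gamma_t$ and datum $h$ on $\partial\Omega$, and invokes the uniqueness of the quasilinear Dirichlet problem for $w$ to conclude $w=u+v$. The interface equation \eqref{eq:Darcy} is preserved because the parametrization $x=y+\nabla U$ is the same. I expect the main obstacle to be technical rather than conceptual: without the level-surface assumption the linearized boundary operator on $\Gamma_0$ mixes $w$, $\nabla w$ and second derivatives of $U$, so one must verify that (i) its principal symbol satisfies a Lopatinski\u\i\ condition uniform in the iteration, and (ii) the data $w_0$ and $h$ automatically fulfil the first-order compatibility conditions on $\partial\Omega\times\{0\}$ and on $\Gamma_0\times\{0\}$ under the stated hypotheses — this last check being the delicate point where the weaker assumption of Theorem \ref{th:lagrange-1} must be shown to still suffice.
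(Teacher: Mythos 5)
Your high-level plan (reuse the Lagrangian construction from Theorem \ref{th:lagrange}, drop the two ancillary hypotheses, and close the iteration) is in the right spirit, but you misidentify the one new ingredient that actually makes the proof of Theorem \ref{th:lagrange-1} work, and the route you sketch instead (recast the problem as a transmission problem on $\Gamma_0$ and verify a Lopatinski\u\i\/ condition for the linearized interface operator) is not what the paper does and is not obviously tractable.

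In the paper the modification is encoded in Remarks \ref{rem:1} and \ref{rem:2}: condition $(\ast)$ is simply dropped from \eqref{eq:split-U}, and $(\ast\ast)$ in \eqref{eq:split-p} is replaced by the matching $|J^-|=|J^+|$ on $\Gamma_0$ (condition \eqref{eq:J^-}, i.e.\ condition (i) of Theorem \ref{th:interface}). The construction in $Q_T^\pm$ then goes through unchanged via the same nonlinear parabolic and elliptic steps. What remains, and what the entire proof in Section \ref{sec:proofs} is devoted to, is to check \emph{a posteriori} condition (ii) of Theorem \ref{th:interface}: continuity of the full velocity vector $\mathbf v=X_t$ across $\Gamma_t$, not just of its normal component. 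This is exactly the step that Theorem \ref{th:interface-normal} bypassed in the proof of Theorem \ref{th:lagrange} by prescribing $w=\Phi(t)$ on $\Gamma_t$.

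The argument that closes the gap is short and geometric, and your proposal does not contain it. Since $U^\pm=0$ on the parabolic boundary of $Q_T^\pm$, in particular $U^\pm_t=0$ on $\Gamma_0\times[0,T]$, so $\nabla_y U^\pm_t(y_0,t)\cdot\tau(y_0)=0$ for every tangent vector $\tau(y_0)$ to $\Gamma_0$; but $\mathbf v^\pm(X^\pm(y_0,t),t)=X^\pm_t(y_0,t)=\nabla_y U^\pm_t(y_0,t)$, so both one-sided velocities are parallel to $\mathbf n_x(y_0)$. Since the jump of the $\mathbf n_x(x_0)$-component of $\mathbf v$ across $\Gamma_t$ vanishes by construction (the Neumann condition in \eqref{eq:split-p}), and since any tangent vector $\tau(y_0)$ to $\Gamma_0$ decomposes as $\alpha\tau(x_0)+\beta\,\mathbf n_x(x_0)$ with $\alpha\neq 0$ for small $t$, the tangential jump $[\mathbf v]\cdot\tau(x_0)$ must also vanish. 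This is the actual content of the proof; no Lopatinski\u\i\ verification or new compatibility analysis on $\Gamma_0$ is required, because the boundary/interface conditions on $\Gamma_0$ have already been built into the split problems \eqref{eq:split-U}--\eqref{eq:split-p}. Your proposal, by replacing the moving-boundary kinematics with a generic transmission-system formulation, would both complicate the linearized analysis and miss the reason the tangential velocity matches automatically.
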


\begin{remark}
It is worth noting here that the choice of the Dirichlet boundary condition in
\eqref{eq:system} is mostly the question of convenience. The assertions of Theorems
\ref{th:lagrange}-\ref{th:lagrange-1} remain true if the boundary condition in
\eqref{eq:system} is substituted by any other condition which allows one to guarantee that
the auxiliary problem  \eqref{eq:A} below has a regular solution. In particular, we may pose
the no-flux conditions for $u+v$ on $\partial\Omega\times [0,T]$.
\end{remark}

The proofs of the main results are based on a special nonlocal coordinate transformation
which is similar to introduction of the system of Lagrangian coordinates in continuum
mechanics. The change of independent variables allows us to reduce the construction of the
moving boundary $\Gamma$ (the interface) to a problem posed in a time-independent domain. We
follow the ideas of \cite{Diaz-Shmarev-2009-1,Diaz-Shmarev-2009-2}, see also
\cite{Shm-NA-2003,Shm-2004,Shm-Vazquez-1996} where the method of Lagrangian coordinates was
applied to the study of free boundary problems for nonlinear parabolic equations with
degeneracy on the interface.

Organization of the paper. In Section \ref{sec:Lagrange} we introduce a local system of
Lagrangian coordinates. In the new coordinate system the problem of finding the surface
$\Gamma$ and the solution of problem \eqref{eq:dirichlet} in a vicinity of $\Gamma$
transforms into an equivalent problem posed in a time-independent cylinder. In the new
formulation the interface $\Gamma$ becomes a vertical surface. The new problem is a system
of nonlinear evolution equations which is solved in Section \ref{sec:Lagr-+}. In Section
\ref{sec:proofs} we give the proofs of the main theorems. Finally in Section
\ref{sec:special} we give an account of the available results on the problems of the type
\eqref{eq:cross-diffusion-1} without the contact inhibition assumption and present some
results on the numerical simulation of solution to system \eqref{eq:system} which correspond
to the segregated initial data.

\section{Local system of lagrangian coordinates}
\label{sec:Lagrange} Let us consider the following auxiliary problem: to find a strictly
positive function $w(x,t)$, a family of annular domains $\{\omega^\pm(t)\}_{t>0}$, and the
surface

\[
\Gamma=\bigcup_{t>0}\Gamma_t, \qquad
\Gamma_t=\overline\omega^{+}(t)\cap \overline\omega^{-}(t),
\]
satisfying the conditions

\begin{equation}
\label{eq:aux-Lagrange}
\begin{cases}
& \partial_t w-\operatorname{div}(a\, w\, \nabla w) =f(w)\quad
\text{in $\mathcal{C}^\pm=\bigcup_{t>0}\omega^\pm(t)$},
\\
& [w]|_{\Gamma_t}=0, 
\\
& \text{$w(x,0)= w_{0}(x)$ in $\omega^\pm(0)$},
\\
& \displaystyle\int_{\omega^\pm(t)}w(x,t)\,dx=\int_{\omega^\pm(0)}w_{0}(x)\,dx \quad
\forall\,t\in (0,T]
\end{cases}
\end{equation}
Here and throughout the rest of the paper the symbol $[\phi]_{\gamma}$ means the jump of the
function $\phi$ across the surface $\gamma$. The surface $\Gamma_0$ is the common boundary
of the annular domains $\omega^{\pm}(0)$. The exterior boundary of $\omega^+(0)$ is denoted
by $\partial\omega^+(0)$, $\partial\omega^-(0)$ stands for the interior boundary of
$\omega^-(0)$. Notice that problem (\ref{eq:aux-Lagrange}) includes three unknown
boundaries: the interface $\Gamma$ and $\bigcup\limits_{t>0}\partial\omega^{\pm}(t)$.

We will use the notations $\omega(t)=\omega^+(t)\cup \Gamma_t\cup \omega^-(t)$ and
$\mathcal{C}=\mathcal{C}^+\cup \mathcal{C}^-$.

\begin{definition}
\label{def:div-div} A pair $(w,\,\mathcal{C})$ is called weak
solution of problem \emph{(\ref{eq:aux-Lagrange})} if

\begin{itemize}
\item[(i)]  $w\in C^{0}(\overline{\mathcal{C}})\cap L^{2}(0,T;H^{1}(\omega(t)))$,

\item[(ii)] $\forall \, \phi\in C^{1}(\overline{\mathcal{C}})$, such that $\phi(x,T)=0$ and
$\phi=0$ on $\partial\omega^\pm(t)\times [0,T]$,

\begin{equation}
\label{eq:def-1}
\begin{split}
\int_{\mathcal{C}} (w\,\phi_t-a\,w\,\nabla w\cdot \nabla \phi+\phi\,f)\,dxdt &
+\int_{\omega(0)}\phi(x,0)w_{0}\,dx=0.
\end{split}
\end{equation}
\end{itemize}
\end{definition}

\subsection{A coordinate transformation in a moving annular domain}

Let us consider the problem of defining the family of transformations $X(y,t):\,S(0)\mapsto
S(t)$ of an open annular set $S(0)\subset \mathbb{R}^{n}$ and a function $w(x,t)$ according
to the following conditions:

\begin{itemize}
\item[a)] for every $t>0$

\begin{equation}
\label{eq:diffeomorphism} \text{$X(y,t):\,\overline{S(0)}\mapsto \overline{S(t)}\subset
\mathbb{R}^{n}$ is a diffeomorphism},
\end{equation}
that is $S(t)=X(S(0),t)$, $S(0)=X^{-1}(S(t),t)$,
$\partial S(t)=X(\partial S(0),t)$,

\item[b)] the deformation of $S(t)$ is governed by the differential equation

\begin{equation}
\label{eq:div}
\begin{cases}
& \operatorname{div}_x\,(w\,(X_t(y,t)-\mathbf{v}(X(y,t),t)))=0\quad \text{for a.e.}\,y\in
S(0),\, t>0,
\\
& X(y,0)=y\in S(0),
\end{cases}
\end{equation}
with a given vector-field $\mathbf{v}(x,t):\,S(t)\times [0,T]\mapsto \mathbb{R}^{n}$ in the
sense that for every $\phi\in C^{1}(0,T;C_{0}^{1}(S(t)))$

\[
\int_{S(t)}w\nabla\phi\cdot (X_t(y,t)-\mathbf{v}(X(y,t),t))\,dx =0,\quad t>0,
\]

\item[c)] for every subset $\sigma(0)\subset S(0)$ its image $\sigma(t)$ at the instant
$t\geq 0$ is connected with the function $w(x,t)$ by the formula

\begin{equation}
\label{eq:conserve-2}
\int_{\sigma(0)}w(x,0)\,dx=\int_{\sigma(t)}w(x,t)\,dx.
\end{equation}
\end{itemize}
Let $J$ be the Jacobian matrix of the mapping $y\to X(y,t)$, $|J|\not=0$ because of
(\ref{eq:diffeomorphism}). By agreement we always denote

\[
\widetilde g(y,t)=\left.g(x,t)\right|_{x=X(y,t)},
\]
so that $\widetilde w(y,t)\equiv w[X(y,t),t]\equiv w(x,t)$. Take an arbitrary set
$\sigma(0)\subseteq S(0)$ and denote $\sigma(t)=X(\sigma(0),t)$. For a.e. $t>0$

\begin{equation}
\label{eq:conserv-1}
\begin{split}
0 & =\dfrac{d}{dt}\left(\int_{\sigma(t)}w(x,t)\,dx\right)
 \\
 &
 =\dfrac{d}{dt}\left(\int_{\sigma(0)}\widetilde
w(y,t)\,|J|\,dy\right) =\int_{\sigma(0)}\dfrac{d}{dt}\left(\widetilde
w(y,t)\,|J|\right)\,dy,
\end{split}
\end{equation}
provided that $|J|$ is continuous as a function of $y$. Since $\sigma(0)$ is arbitrary and
$|J(y,0)|=1$, it is necessary that

\begin{equation}
\label{eq:conserve-Lagrange} \widetilde w(y,t)\,|J(y,t)|=w(y,0)\quad \text{for a.e. $y\in
S(0)$, $t>0$}.
\end{equation}

\begin{lemma}
\label{le:div-div} Assume that
\begin{enumerate}

\item $X$ satisfy \emph{(\ref{eq:diffeomorphism})}, $|J(y,t)|\in C^{0}(S(0))$ and
$|J(y,t)|\not=0$ in $S(0)$ for a.e. $t\in (0,T)$,

\item equations \emph{(\ref{eq:div})} and \emph{(\ref{eq:conserve-Lagrange})} are fulfilled
a.e. in the cylinder $S(0)\times (0,T)$,

\item $\widetilde w\cdot(X_t-\mathbf{v}(X,t))\in (L^{\infty}(S(0)))^{n}$ for a.e. $t\in
(0,T)$,

\item $\partial S(t)\in Lip$ for a.e. $t\in (0,T).$
\end{enumerate}
Then the function $w(x,t)|_{x=X(y,t)}=\widetilde w(y,t)$ defined  by
\emph{(\ref{eq:conserve-Lagrange})} satisfies the conditions

\begin{equation}
\label{eq:lagr}
\begin{cases}
& w_t-\operatorname{div}\,(w\,\mathbf{v}(x,t))=0\quad \text{in $\mathcal{D}\equiv
\bigcup_{t\in (0,T)}X(S(0),t)$},
\\
& \text{$w(x,0)=w_0(x)$ in $S(0)$}
\end{cases}
\end{equation}
in the following sense: $\forall\,\phi\in C^{1}(0,T;C_{0}^{1}(\mathbb{R}^{n}))$,
$\phi(x,T)=0$, $\phi=0$ on $\partial S(t)\times [0,T]$,

\begin{equation}
\label{eq:1-1}
\begin{split}
\int_{S(0)}w_0(x)\,\phi(x,0)\,dx & +\int_{\mathcal{D}} w\,\left(\phi_t+\nabla_{x}\phi\cdot
\mathbf{v}(x,t)\right)\,dxdt =0.
\end{split}
\end{equation}
\end{lemma}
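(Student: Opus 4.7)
The plan is to pull everything back to the fixed reference domain $S(0)$ via the Lagrangian diffeomorphism $X(\cdot,t)$, where the mass-conservation identity (\ref{eq:conserve-Lagrange}) takes the elementary form $\widetilde{w}\,|J|=w_0(y)$ --- a function of $y$ alone. This time-independence of the pull-back is exactly what will trivialise the test-function's time derivative once everything is transported into the $(y,t)$ variables.

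Given an admissible $\phi(x,t)$ in (\ref{eq:1-1}), I would introduce its Lagrangian pull-back $\widetilde{\phi}(y,t):=\phi(X(y,t),t)$. Differentiation along the flow produces the pointwise identity
\[
\phi_t+\nabla_x\phi\cdot\mathbf{v}
\;=\;\widetilde{\phi}_t-\nabla_x\phi\cdot(X_t-\mathbf{v}),
\]
after which I would change variables $x=X(y,t)$ in the integral $\int_{\mathcal D}w(\phi_t+\nabla_x\phi\cdot\mathbf{v})\,dxdt$. This is legitimate for a.e.\ $t$ because $|J|$ is continuous and nonzero (hypothesis~1), $\partial S(t)$ is Lipschitz (hypothesis~4), and the integrand is $L^1$ by hypothesis~3. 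Substituting $\widetilde{w}|J|=w_0(y)$ splits the integral into
\[
\int_0^T\!\!\int_{S(0)} w_0(y)\,\widetilde{\phi}_t\,dy\,dt
\;-\;\int_0^T\!\!\int_{S(0)} \widetilde{w}\,|J|\,\nabla_x\phi\cdot(X_t-\mathbf{v})\,dy\,dt.
\]

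The first piece I would reduce by a straightforward integration by parts in $t$: since $w_0$ is $t$-independent, it collapses to $\int_{S(0)}w_0(y)[\widetilde{\phi}(y,T)-\widetilde{\phi}(y,0)]\,dy$, and using $X(y,0)=y$ together with $\phi(\cdot,T)\equiv 0$ this becomes $-\int_{S(0)}w_0(y)\phi(y,0)\,dy$, which is precisely the initial-data contribution in (\ref{eq:1-1}). For the second piece I would undo the change of variables and return to $\int_0^T\int_{S(t)} w\,\nabla_x\phi\cdot(X_t\circ X^{-1}-\mathbf{v})\,dx\,dt$; invoking (\ref{eq:div}) pointwise in $t$ against the test function $\phi(\cdot,t)$, which belongs to $C_0^1(S(t))$ thanks to the boundary condition $\phi=0$ on $\partial S(t)\times[0,T]$, annihilates this term for a.e.\ $t$.

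The hard part will not be the algebraic identity, but the rigorous justification of the two transformations under the stated minimal regularity: one has to verify, using hypotheses~1 and~4, that the spatial change of variables applies to the $H^1$-type integrand at hand; one has to invoke hypothesis~3 to license Fubini and to exchange time differentiation with spatial integration; and one has to confirm that the pull-back $\widetilde{\phi}$ (or equivalently $\phi(\cdot,t)$ in the $x$-picture) is genuinely admissible in the weak form of (\ref{eq:div}). Once these measure-theoretic checks are carried out, combining the two simplifications delivers (\ref{eq:1-1}) directly and closes the argument.
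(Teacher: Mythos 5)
Your argument is correct and is essentially the same as the paper's: pull back to the fixed domain $S(0)$ via the Lagrangian change of variables, exploit the time-independence of $\widetilde w\,|J|=w_0$ to collapse the time integral, and invoke the weak form of \eqref{eq:div} (the paper cites the Green--Gauss formula for divergence-measure fields from the reference \cite{Frid} to license this under the stated regularity, which is the measure-theoretic check you flag as the ``hard part'') to annihilate the $(X_t-\mathbf v)$ term. The only difference is cosmetic: the paper starts from $-\int_{S(0)} w_0\phi(\cdot,0)\,dy=\int_0^T\frac{d}{dt}\int_{S(t)}w\phi\,dx\,dt$ and unfolds via the product rule, whereas you start from the volume integral and split; the two computations are rearrangements of the same algebra.
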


\begin{proof} By \cite[Th.2.2]{Frid} for a.e. $t>0$ the field
$F:=w\,(X_t-\mathbf{v}(X,t))$ has the normal traces on every Lipschitz-continuous surface in
$\overline{S(t)}$ and the Green-Gauss formulas hold: for every $\phi\in
C^{1}(0,T;C_{0}^{1}(S(t)))$

\[
0=\int_{S(t)}\phi\,\operatorname{div}\,F\,dx=-\int_{S(t)}\nabla\phi\cdot F\,dx.
\]
Let us denote $D_T=S(0)\times (0,T]$. Notice that for every test-function $\phi\in
C^{\infty}(0,T;C_{0}^{1}(S(t)))$, vanishing as $t=T$,

\[
\left.\dfrac{d}{dt}\phi(x,t)\right|_{x=X(y,t)}=\widetilde \phi_t(y,t)+\widetilde{\nabla_x
\phi(x,t)}\cdot X_t.
\]
Then

\[
\begin{split}
-\int_{S(0)}w_0(y)\,\phi(y,0)\,dy &
=\int_{0}^{T}\dfrac{d}{dt}\left(\int_{S(t)}w(x,t)\,\phi(x,t)\,dx\right)\,dt
\\
& = \int_{0}^{T}\dfrac{d}{dt}\left(\int_{S(0)}\widetilde w(y,t)\,\widetilde
\phi(y,t)\,|J|\,dy\right)\,dt
\\
& = \int_{D_T}\dfrac{d}{dt}\left(\widetilde w(y,t)\,\widetilde
\phi(y,t)\,|J|\right)\,dy dt
\\
& =\int_{D_T}\left[\dfrac{d}{dt}\left(\widetilde
w(y,t)|J|\right)\widetilde \phi+\widetilde
w\,(\widetilde{\phi_t}+\widetilde{\nabla_x\phi}\cdot
X_{t})\,|J|\right]\,dydt.
\end{split}
\]
Using (\ref{eq:conserve-Lagrange})  we obtain

\begin{equation}
\label{eq:partial}
\begin{split}
- \int_{S(0)}w_0(y)\,\phi(y,0)\,dy & =\int_{D_T}\widetilde
w\,(\phi_t+\widetilde{\nabla_x\phi}\cdot X_{t})\,|J|\,dydt
\\
& =\int_{D_T}\widetilde
w\,\left(\widetilde{\phi_t}+\widetilde{\nabla_{x}\phi}\cdot
\mathbf{v}(X,t)\right)\,|J|\,dydt
\\
& = \int_{0}^{T}\int_{S(t)} w\,\left(\phi_t+\nabla_{x}\phi\cdot
\mathbf{v}(x,t)\right)\,dxdt.
\end{split}
\end{equation}
\end{proof}

\begin{theorem}
\label{th:interface} Assume that the domain $\omega(0)$ is split into two annular domains
$\omega^{\pm}(0)$ by the Lipschitz-continuous surface $\Gamma_0$ such that $\Gamma_0\cap
\partial\omega^\pm(0)=\emptyset$. If the conditions of Lemma \ref{le:div-div} are fulfilled in
each of the domains $\omega^{\pm}(0)$ and if

\begin{enumerate}
\item[(i)] $\lim\limits_{\omega^{+}(0)\ni y\to y_0\in
\Gamma_0}|J(y,t)|=\lim\limits_{\omega^{-}(0)\ni y\to y_0\in \Gamma_0}|J(y,t)|\quad
\forall\,y_0\in \Gamma_0,\;t\in [0,T]$,

\item[(ii)] $\mathbf{v}\in C^0(\overline{\omega^{+}(t)\cup \omega^-(t)})\times [0,T];
\,\mathbb{R}^{n})$,
\end{enumerate}
then $w(x,t)$ defined by \eqref{eq:conserve-Lagrange} satisfies conditions \eqref{eq:lagr}
in the sense of \eqref{eq:1-1}.
\end{theorem}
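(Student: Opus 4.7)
The plan is to mimic the proof of Lemma \ref{le:div-div} on the combined domain $\omega(t)=\omega^+(t)\cup\Gamma_t\cup\omega^-(t)$, treating the two subdomains $\omega^\pm(t)$ separately in the change-of-variables steps, and then showing that the contributions generated on the interface $\Gamma_t$ cancel. I would fix a test function $\phi\in C^1(0,T;C_0^1(\mathbb{R}^n))$ with $\phi(x,T)=0$ and $\phi=0$ on $\partial\omega(t)\times[0,T]$, noting that $\phi$ is \emph{not} required to vanish on the interior interface $\Gamma_t$.

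Starting from
\[
-\int_{\omega(0)}w_0(y)\,\phi(y,0)\,dy=\int_0^T\frac{d}{dt}\int_{\omega(t)}w\,\phi\,dx\,dt,
\]
I would split the spatial integral as $\int_{\omega^+(t)}+\int_{\omega^-(t)}$ and perform the Lagrangian change $x=X(y,t)$ on each side separately. Using the time-independence $\widetilde w\,|J|=w_0(y)$ coming from \eqref{eq:conserve-Lagrange}, assumption (i) (which makes $|J(\cdot,t)|$ continuous across $\Gamma_0$ and therefore defines a single measurable Jacobian on all of $\omega(0)$), and the chain rule $\frac{d}{dt}\phi(X(y,t),t)=\widetilde\phi_t+\widetilde{\nabla_x\phi}\cdot X_t$, the computation goes through exactly as in Lemma \ref{le:div-div} and yields
\[
-\int_{\omega(0)}w_0\,\phi(y,0)\,dy=\int_0^T\int_{\omega(t)}w\,(\phi_t+\nabla_x\phi\cdot X_t)\,dx\,dt,
\]
where $X_t$ is understood piecewise on $\omega^\pm(t)$. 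The continuity of $|J|$ across $\Gamma_0$ is what makes the combined change of variables globally consistent and the right-hand integrand a well-defined measurable function on $\omega(0)\times(0,T)$.

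The critical new step is replacing $X_t$ by $\mathbf{v}$ inside this integral. Set $F=w\,(X_t-\mathbf{v}(X,t))$. By \eqref{eq:div}, $\operatorname{div}_x F=0$ separately in $\omega^\pm(t)$; applying the Frid normal-trace Green--Gauss formula (as in Lemma \ref{le:div-div}) in each subdomain and using $\phi=0$ on $\partial\omega(t)$ produces
\[
\int_{\omega^\pm(t)}\nabla\phi\cdot F\,dx=\pm\int_{\Gamma_t}\phi\,F^\pm\cdot\mathbf{n}\,dS,
\]
where $\mathbf{n}$ is a fixed unit normal to $\Gamma_t$ and $F^\pm$ are the one-sided normal traces. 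Summing gives
\[
\int_{\omega(t)}\nabla\phi\cdot F\,dx=\int_{\Gamma_t}\phi\,[F\cdot\mathbf{n}]\,dS.
\]
To complete the proof it remains to show $[F\cdot\mathbf{n}]|_{\Gamma_t}=0$. Continuity of $w$ across $\Gamma_t$ follows from $\widetilde w=w_0/|J|$ together with assumption (i) and the continuity of $w_0$ across $\Gamma_0$; continuity of $\mathbf{v}$ is assumption (ii); finally, since $\Gamma_t=X(\Gamma_0,t)$ is the common image of $\Gamma_0$ under both diffeomorphisms $X^\pm(\cdot,t)$, the normal velocity $X_t\cdot\mathbf{n}$ computed from either side coincides (only the tangential components of $X_t$ may jump, and these are annihilated by $\mathbf{n}$). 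Hence $[F\cdot\mathbf{n}]=w\,[X_t\cdot\mathbf{n}]-w\,[\mathbf{v}\cdot\mathbf{n}]=0$, the interface term drops out, and one recovers \eqref{eq:1-1}.

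The main obstacle is the rigorous justification of the jump relations on $\Gamma_t$: the field $F$ is only $L^\infty$, so one must carefully invoke the normal-trace theorem from each side, and transfer the purely geometric fact that $\Gamma_t$ is swept bijectively by $\Gamma_0$ into the pointwise identity $[X_t\cdot\mathbf{n}]=0$ at the regularity available. Assumption (i) plays a double role here: it both guarantees that the piecewise change of variables glues into a single measurable identity on $\omega(0)$, and, through $w=w_0/|J|$, yields the continuity of $w$ across $\Gamma_t$ without which the interface term could not be killed.
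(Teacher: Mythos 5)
Your proof follows the paper's approach — apply Lemma \ref{le:div-div} (i.e.\ the change-of-variables identity \eqref{eq:partial}) separately in $\omega^{\pm}(t)$ and add — but you correctly and explicitly identify the only nontrivial point that the paper leaves implicit in the phrase ``gather relations \eqref{eq:partial}'': since the test function no longer vanishes on $\Gamma_t$, the two applications of the Frid Green--Gauss formula produce a one-sided flux term on $\Gamma_t$ apiece, and these cancel precisely because $[w(X_t-\mathbf{v})\cdot\mathbf{n}]_{\Gamma_t}=0$, which you derive from (i) (continuity of $w=w_0/|J|$), (ii) (continuity of $\mathbf{v}$), and the parametrization-independence of the normal velocity of $\Gamma_t$. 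This is essentially the paper's proof with the missing interface computation supplied.
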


\begin{proof}
By Lemma \ref{le:div-div} problem \eqref{eq:lagr} has solutions $w^{\pm}$ in each of the
domains $\mathcal{C}^{\pm}$. By virtue of condition (ii) the images of the surface
$\Gamma_0$ under the mappings $X^{+}$ and $X^{-}$ coincide, which means that
$\Gamma_t=\overline{\mathcal{C}}^+\cap \overline{\mathcal{C}}^+$. The function
$w(x,t)=\widetilde w(y,t)$ defined by \eqref{eq:conserve-Lagrange} in each of the domains
$\mathcal{C}^{\pm}$ is continuous across the surface $\Gamma_t$ because of assumption (i).
Finally, to get \eqref{eq:lagr} we gather relations \eqref{eq:partial}, corresponding to the
domains $\omega^{\pm}(0)$.
\end{proof}

Theorem \ref{th:interface} will be used in the proof of Theorem \ref{th:lagrange-1}. In the
proof of Theorem \ref{th:lagrange} we rely on the following version of Theorem
\ref{th:interface}.

\begin{theorem}
\label{th:interface-normal} The assertion of Theorem \ref{th:interface} remains true if
condition {\rm (ii)} is substituted by the conditions

\begin{itemize}
\item[(iii)]$\quad w(x,t)=\Phi(t)$ on $ \Gamma_t$, $\qquad [\mathbf{v}\cdot
\mathbf{n}_{x}]_{\Gamma_t}=0$,
\end{itemize}
where $\mathbf{n}_{x}$ denotes the unit normal vector directed inward $\omega^{-}(t)$, and
$\Phi(t)$ is a given strictly positive function.
\end{theorem}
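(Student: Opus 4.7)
The plan is to mirror the proof of Theorem \ref{th:interface}, applying Lemma \ref{le:div-div} separately in each annular domain $\omega^\pm(0)$ to produce diffeomorphisms $X^\pm$ and densities $w^\pm$ that satisfy \eqref{eq:lagr} in the sense of \eqref{eq:1-1} for test functions supported in $\mathcal{C}^\pm$ and vanishing on $\partial\omega^\pm(t)\cup \Gamma_t$. The continuity assumption (ii) on $\mathbf{v}$ entered the earlier proof only in the assertion that $X^+(\Gamma_0,t)=X^-(\Gamma_0,t)$; under the weaker hypothesis (iii) one has to recover this identification and, once it is in hand, to glue the two half-formulations across $\Gamma_t$ so that the resulting weak form holds for test functions required to vanish only on the outer boundaries.

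For the geometric coincidence I would argue along the following lines. The normal component of $X^\pm_t$ on $\Gamma_t^\pm:=X^\pm(\Gamma_0,t)$ equals the normal velocity of that hypersurface at the point $x=X^\pm(y,t)$, while the tangential component is subject only to the divergence constraint \eqref{eq:div}; the natural (and, within that constraint, admissible) choice is $X^\pm_t\cdot \mathbf{n}_x=\mathbf{v}^\pm\cdot \mathbf{n}_x$ on $\Gamma_t^\pm$. By (iii) the right-hand sides agree across $\Gamma_t$, and two smooth hypersurfaces issuing from the common $\Gamma_0$ with identical normal speed must coincide as sets. Continuity of $w$ across $\Gamma_t$ then follows for free from $w^\pm|_{\Gamma_t}=\Phi(t)$, which replaces the role played by (i) in Theorem \ref{th:interface}; in fact the conservation identity \eqref{eq:conserve-Lagrange} now forces $|J^+|=|J^-|=w_0/\Phi(t)$ along $\Gamma_0$.

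To assemble the full weak formulation I would add the Eulerian identities \eqref{eq:partial} on $\omega^\pm(0)$, obtaining \eqref{eq:1-1} on $\omega(t)$ for test functions that additionally vanish on $\Gamma_t$. To remove this restriction, given a general $\phi\in C^1(0,T;C_0^1(\mathbb{R}^n))$ with $\phi=0$ only on the outer boundaries, I would integrate $w(\phi_t+\nabla\phi\cdot \mathbf{v})$ by parts inside each cylinder $\mathcal{C}^\pm$ using the strong form of $w_t=\operatorname{div}(w\,\mathbf{v})$ valid there; this procedure generates boundary contributions on $\Gamma_t$ of the shape $\int_0^T\!\int_{\Gamma_t}\phi\,[w\,\mathbf{v}\cdot \mathbf{n}_x]\,dS\,dt$, and both jumps $[w]_{\Gamma_t}$ and $[\mathbf{v}\cdot \mathbf{n}_x]_{\Gamma_t}$ vanish by (iii), so these terms cancel and \eqref{eq:1-1} holds in the full annular region.

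The main obstacle is precisely the geometric step of the second paragraph: two independently constructed Lagrangian flows whose tangential velocities may be incompatible on $\Gamma_t$ must nevertheless sweep out the same moving interface. Everything rests on the kinematic principle that a hypersurface evolves only by its normal velocity, and condition (iii) is exactly what makes that velocity unambiguous on $\Gamma_t$; once the interface is identified, the matching of $w$ and the cancellation of the jump terms in the weak formulation are considerably softer than in the proof of Theorem \ref{th:interface}.
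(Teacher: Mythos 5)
Your proof follows the same route as the paper: the paper's one-sentence argument is precisely the observation in your third paragraph that $w=\Phi(t)$ together with $[\mathbf{v}\cdot\mathbf{n}_x]_{\Gamma_t}=0$ makes the flux $w\,\mathbf{v}\cdot\mathbf{n}_x$ continuous across $\Gamma_t$, which is exactly the jump condition that cancels the interface boundary terms arising when the two weak formulations from Lemma~\ref{le:div-div} are glued. Your second paragraph on identifying $X^+(\Gamma_0,t)$ with $X^-(\Gamma_0,t)$ spells out a step the paper leaves implicit (and in fact defers to the potential-flow structure, where $U^\pm=0$ on $\Gamma_0$ forces the tangential velocity to vanish), but otherwise the argument matches.
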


\begin{proof}
The assertion follows from Lemma \ref{le:div-div}: although the tangential component of the
velocity is no longer continuous across $\Gamma_t$, the assumption $w=\Phi(t)$ on $\Gamma_t$
provides continuity of the flux $w\,(\mathbf{v}\cdot \mathbf{n}_x)$ across $\Gamma_t$.
\end{proof}

\subsection{Potential flows} Let us now search for the fields $X(y,t)$ and $\mathbf{v}(X,t)$
in the potential form:

\begin{equation}
\label{eq:fields}
\begin{split}
& \text{$X(y,t)=y+\nabla_yU$ in $\omega^{\pm}(0)\times [0,T]$},
\\
& \text{$\mathbf{v}(x,t)=-a_{\pm}\,\nabla_x w + \nabla_x p$  in $\omega^{\pm}(0)\times
[0,T]$},
\\
& \text{$U=0$ on the parabolic boundaries of $\omega^{\pm}(0)\times [0,T]$},
\end{split}
\end{equation}
where $U(y,t)$ and $w(x,t)=\widetilde{w}(y,t)$ are scalar functions related by
\eqref{eq:conserve-Lagrange} and $p(x,t)$ is the new unknown. The parabolic boundary of a
cylinder means ``the lateral boundaries and the bottom". For every $\phi\in
C^{1}(0,T;C_{0}^{1}(\omega(t)))$,  $\phi(x,T)=0$, $\phi=0$ on $\partial\mathcal{C}$,

\begin{equation}
\label{eq:omega^+} \int_{\omega(0)}w_0\,\phi(x,0)\,dx+\int_{\mathcal{C}}
\left(w\,\phi_t-a\,w\,\nabla_{x}\phi\cdot\nabla_x
w\right)\,dx+\int_{\mathcal{C}}w\,\nabla_{x}\phi\cdot\nabla_x p\,dx=0.
\end{equation}
Let us take for $p$ a solution of the elliptic equation endowed with the Dirichlet boundary
conditions on $\partial\omega^{\pm}(t)$ and satisfying the additional condition on
$\Gamma_t$, which provides continuity of the flux $\Phi(t)\,\left(\mathbf{v}\cdot
\mathbf{n}_x\right)$ across the moving boundary:

\begin{equation}
\label{eq:ellip-p}
\begin{cases}
&  -\operatorname{div}_x\left(w\,\nabla_x p\right)=f(w)\quad \text{in $\omega^\pm(t)$},
\\
& \text{$p=0$ on $\partial\omega^{\pm}(t)$},
\\
&  [\nabla_xp\cdot \mathbf{n}_x]_{\Gamma_t}=[a\,\nabla_x w\cdot \mathbf{n}_x]_{\Gamma_t}.
\end{cases}
\end{equation}
Then for every smooth $\phi $, such that $\phi(x,T)=0$, $\phi=0$ on $\partial\mathcal{C}$,

\begin{equation}
\label{eq:omega^+-prim} \int_{\omega(0)}w_0\,\phi(x,0)\,dx+\int_{\mathcal{C}}
\left(w\,\phi_t-a\,w\,\nabla_{x}\phi\cdot\nabla_x w +f\,\phi\right)\,dx=0.
\end{equation}

Let us formulate the conditions for $U$ and $P=\widetilde{p}$ in the time-independent
annular cylinders

\[
Q^\pm_{T}=\omega^\pm(0)\times [0,T].
\]
Denote by $J$ the Jacobian matrix of the mapping $x=y+\nabla U$. Applying Lemma
\ref{le:div-div} we have that for every test-function $\phi(x,t)=\phi(X(y,t),t)=\widetilde
\phi(y,t)$, $\phi\in C^{0}(0,T;C^{1}_{0}(\omega(t)))$, $\phi=0$ on $\partial\omega(t)$,

\[
\begin{split}
0 &
=\int_{\omega^{\pm}(t)}\phi\,\operatorname{div}_x\,(w\,(\nabla_yU_t+a_-\widetilde{\nabla_x
w}-\widetilde{\nabla_{x}p })\,dx
\\
& =-\int_{\omega^{\pm}(0)}\left[w_0((J^{-1})^{2}\cdot \nabla_y\widetilde\phi)\cdot
\left(J\cdot \nabla_yU_t+a\, \nabla_y\left(w_0|J|^{-1}\right)- \nabla_y \widetilde
p\right)\right]\,dy.
\end{split}
\]
In particular, if $w_0,\,J_{ij}\in C^{\alpha}(\omega^{\pm}(0))$, and if $|J|$ is separated
away from zero, we may take for $\phi$ a solution of the problem

\[
\begin{cases} & \text{$\operatorname{div}_{y}\left(w_0(J^{-1})^{2}\cdot
\nabla_y\widetilde\phi\right)=\Delta_y\widetilde \psi$ in $\omega^{\pm}(0)$},
\\
& \text{$\phi=0$ on $\partial\omega^\pm(0)$}
\end{cases}
\]
with an arbitrary $\psi\in C^{0}(0,T;C^{1}_0(\omega(0)))$, whence

\[
\int_{\omega^{\pm}(0)}\psi\,\operatorname{div}_y\left(J\cdot \nabla_yU_t+a_+
\nabla_y\left(w_0|J|^{-1}\right)- \nabla_y \widetilde p\right)\,dy=0
\]
and

\begin{equation}
\label{eq:+} \begin{cases} &  \qquad \mathcal{L}(U,\widetilde p)\equiv
\operatorname{div}_y\left(J\cdot \nabla_yU_t+a \nabla_y\left(w_0|J|^{-1}\right)- \nabla_y
\widetilde p\right)=0\quad \text{in $Q^{\pm}_T$},
\\
& (a)\quad \text{$U=0$ on the parabolic boundaries of $Q_{T}^{\pm}$},
\\
& (b) \quad \text{$\Phi(t)|J|_{\Gamma_{0}}=\Phi(0)$ for all $t\in [0,T]$}.
\end{cases}
\end{equation}
The boundary condition \eqref{eq:+} (b) follows from \eqref{eq:conserve-Lagrange} and the
condition $w=\Phi(t)$ on $\Gamma_t$. (If we assume the conditions of Theorem
\ref{th:lagrange-1}, this condition is omitted). Proceeding in the same way we transform the
problem for $\widetilde p(y,t)=p(x,t)$ into the problem posed in the time-independent
domains $\omega^\pm(0)$:

\begin{equation}
\label{eq:++}
\begin{cases}
& \qquad \mathcal{M}(\widetilde p,U)\equiv
-\operatorname{div}_{y}\left(w_{0}(J^{-1})^{2}\nabla_y \widetilde p\right) +{f}
(w_0|J^{-1}|)|J|=0
\\
&\qquad \qquad \qquad
 \text{in $\omega^{\pm}(0)$ for a.e. $t\in (0,T]$},
\\
& (a) \quad \text{$\widetilde p=0$ on $\partial\omega^{\pm}(0)$},
\\
& (b)\quad [J^{-1}\cdot \nabla_y\widetilde p\cdot
\widetilde{\mathbf{n}}_x]_{\Gamma_0}=[a\,J^{-1}\cdot \nabla_y (w_0|J^{-1}|)\cdot
\widetilde{\mathbf{n}}_x]_{\Gamma_0}.
\end{cases}
\end{equation}
Condition \eqref{eq:++} (b) provides continuity of the normal component of the velocity
$\mathbf{v}$ across the moving boundary $\Gamma_t$.

\begin{theorem}
\label{th:lagr-euler} Let us assume that problem \eqref{eq:+}-\eqref{eq:++} has a solution
$(U,\widetilde{p})$ such that the conditions of Lemma \emph{\ref{le:div-div}} are fulfilled
with

\begin{equation}
\label{eq:Darcy-1} X=y+\nabla_y U\quad \text{ and}\quad \mathbf{v}=-a\,\nabla_x w+\nabla_x
p.
\end{equation}
Then the function $w(x,t)$ defined by the formulas

\begin{equation}
\label{eq:param-repr}
\begin{cases}
& \mathcal{C}^{\pm}=\left\{(x,t):\,x=y+\nabla_y U(y,t),\,(y,t)\in Q^{\pm}_T\right\},
\\
& w(x,t)={w_{0}(y)}{|J^{-1}|},
\end{cases}
\end{equation}
is a solution of problem \eqref{eq:aux-Lagrange} in the sense of Definition
\ref{def:div-div}. The moving boundaries of $\mathcal{C}$ and the interface $\Gamma$ are
parametrized by the equations

\[
x|_{\Gamma_t}=y|_{\Gamma_0}+\nabla_yU(y,t)|_{\Gamma_0},\qquad x|_{\partial\omega^{\pm}(t)}
=y|_{\partial\omega^{\pm}(0)}+\nabla_yU(y,t)|_{\partial\omega^{\pm}(0)}.
\]
\end{theorem}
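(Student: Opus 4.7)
The plan is to verify the two conditions in Definition \ref{def:div-div} for the pair $(w,\mathcal C)$ defined by \eqref{eq:param-repr} by applying Theorem \ref{th:interface-normal} with the potential-flow choice $X = y+\nabla_y U$ and $\mathbf v = -a\nabla_x w + \nabla_x p$, and then converting the pressure-gradient contribution into the source term $f(w)$ by means of the elliptic problem \eqref{eq:ellip-p}.

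First I would check that the hypotheses of Lemma \ref{le:div-div} and of Theorem \ref{th:interface-normal} are in place in each subdomain. The formula $w = w_0|J^{-1}|$ is exactly the conservation identity \eqref{eq:conserve-Lagrange}. Since $(U,\widetilde p)$ is assumed to solve \eqref{eq:+}-\eqref{eq:++} with enough regularity to make $J$ continuous and non-degenerate on $\overline{\omega^\pm(0)}$, the mapping $X$ is a diffeomorphism onto $\overline{\omega^\pm(t)}$ in the sense of \eqref{eq:diffeomorphism}. The Dirichlet conditions $U=0$ on the parabolic boundary of $Q^\pm_T$ pin the exterior boundaries and give $w(x,0) = w_0(x)$. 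A chain-rule manipulation based on $\nabla_x = J^{-T}\nabla_y$ and a Piola-type identity for $J$ shows that $\mathcal L(U,\widetilde p) = 0$ is the Lagrangian pullback of the divergence equation \eqref{eq:div} with $\mathbf v$ as in \eqref{eq:Darcy-1}, and, similarly, that $\mathcal M(\widetilde p,U) = 0$ is the Lagrangian pullback of $-\operatorname{div}_x(w\nabla_x p) = f(w)$ from \eqref{eq:ellip-p}.

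Next I would invoke Theorem \ref{th:interface-normal}. The boundary condition \eqref{eq:+}(b) combined with $w = w_0/|J|$ yields both $w|_{\Gamma_t} = \Phi(t)$ and equality of the limits of $|J|$ from the two sides of $\Gamma_0$, i.e.\ hypothesis (i) of Theorem \ref{th:interface}. The jump condition \eqref{eq:++}(b) is the Lagrangian pullback of $[\nabla_x p\cdot\mathbf n_x]_{\Gamma_t} = [a\nabla_x w\cdot \mathbf n_x]_{\Gamma_t}$, which for our $\mathbf v$ says exactly $[\mathbf v\cdot\mathbf n_x]_{\Gamma_t} = 0$ — the second part of condition (iii) of Theorem \ref{th:interface-normal}. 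Theorem \ref{th:interface-normal} then furnishes the transport identity \eqref{eq:1-1} for $w$ on $\mathcal C$ which, upon substitution of $\mathbf v$, reads
\[
\int_{\omega(0)} w_0\phi(x,0)\,dx + \int_{\mathcal C}\bigl(w\phi_t - a w\nabla_x w\cdot\nabla_x\phi + w\nabla_x p\cdot\nabla_x\phi\bigr)\,dxdt = 0
\]
for every admissible test function.

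The last step is to rewrite $\int_{\mathcal C} w\nabla_x p\cdot\nabla_x\phi\,dxdt$ as $\int_{\mathcal C} f(w)\phi\,dxdt$ by integration by parts on each $\omega^\pm(t)$: the interior contribution equals $\int_{\omega^\pm(t)} f(w)\phi\,dx$ by \eqref{eq:ellip-p}, while the boundary contributions on $\partial\omega^\pm(t)$ either vanish because $\phi = 0$ there or cancel between the two subdomains thanks to the jump condition in \eqref{eq:ellip-p}(c) together with the continuity of $w\phi$ across $\Gamma_t$. This produces exactly \eqref{eq:def-1}. The regularity $w\in C^0(\overline{\mathcal C})\cap L^2(0,T;H^1(\omega(t)))$ is transported from the regularity of $U$ and $\widetilde p$ through the diffeomorphism $X$, and the mass-conservation identity in \eqref{eq:aux-Lagrange} is the case $\sigma(0) = \omega^\pm(0)$ of \eqref{eq:conserve-Lagrange}. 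The main obstacle I anticipate is the explicit chain-rule/Piola bookkeeping that identifies $\mathcal L(U,\widetilde p) = 0$ and $\mathcal M(\widetilde p,U) = 0$ with the Eulerian equations \eqref{eq:div} and \eqref{eq:ellip-p}; once those translations are settled, the remainder reduces to the assembly of Theorem \ref{th:interface-normal} with a routine integration by parts.
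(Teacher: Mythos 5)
The paper's own proof of Theorem \ref{th:lagr-euler} is a single sentence — ``The proof is an immediate byproduct of Theorem \ref{th:interface-normal}.'' — and what you have written is essentially a faithful expansion of the logic the paper sets up in the preceding pages (equations \eqref{eq:omega^+}, \eqref{eq:ellip-p}, \eqref{eq:omega^+-prim} and their Lagrangian pullbacks \eqref{eq:+}, \eqref{eq:++}). Your route coincides with the paper's: read $w = w_0|J^{-1}|$ as the conservation identity \eqref{eq:conserve-Lagrange}; identify $\mathcal L(U,\widetilde p)=0$ and $\mathcal M(\widetilde p,U)=0$ as the $y$-coordinate versions of \eqref{eq:div} and \eqref{eq:ellip-p} with $X=y+\nabla_yU$, $\mathbf v=-a\nabla_x w+\nabla_x p$; check conditions (i) and (iii) of Theorems \ref{th:interface} and \ref{th:interface-normal} from \eqref{eq:+}(b) and \eqref{eq:++}(b); invoke Theorem \ref{th:interface-normal}; and finally trade $\int w\,\nabla_x\phi\cdot\nabla_x p$ for $\int f(w)\phi$ by means of the elliptic problem.

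One detail in the final step deserves a sharper statement than you give it. You write that the interface contribution produced by integrating $\int w\,\nabla_x p\cdot\nabla_x\phi$ by parts ``cancels thanks to the jump condition \eqref{eq:ellip-p}(c) together with the continuity of $w\phi$.'' Taken by itself the jump condition gives $[\nabla_x p\cdot\mathbf n_x]=[a\nabla_x w\cdot\mathbf n_x]$, not $[\nabla_x p\cdot\mathbf n_x]=0$, so the interface integral produced by the pressure term alone does not vanish. What the jump condition actually guarantees is $[\mathbf v\cdot\mathbf n_x]=0$, i.e.\ continuity of the flux $w\,\mathbf v\cdot\mathbf n_x$ across $\Gamma_t$ (given $w=\Phi(t)$ there), which is precisely what makes the transport identity \eqref{eq:1-1} valid on the whole of $\mathcal C$ for test functions that do not vanish on $\Gamma_t$. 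The interface integrals coming from $\nabla_x p$ and from $-a\nabla_x w$ must therefore be assembled together — as parts of $\mathbf v$ — and it is their combination, not the pressure contribution alone, that Theorem \ref{th:interface-normal} controls. The paper elides this point entirely (it is implicit in the passage from \eqref{eq:omega^+} to \eqref{eq:omega^+-prim}), so your account is no less precise than the source; it just attributes the cancellation to the wrong ingredient. Apart from this bookkeeping issue, your proposal is correct and follows the same approach as the paper.
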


The proof is an immediate byproduct of Theorem \ref{th:interface-normal}.

\subsection{Splitting the problems in the annular cylinders $Q_{T}^{\pm}$}

The next step is to split the nonlinear system \eqref{eq:+}-\eqref{eq:++} into two similar
systems in the annular cylinders $Q_{T}^{\pm}$ which can be solved sequentially. Let us
consider first the following problem for defining $(U^+,P^+)$:

\begin{equation}
\label{eq:split-U}
\begin{cases}
& \text{$\mathcal{L}(U^+,P^+)=0$ in $Q_{T}^{+}$},
\\
& \text{$\mathcal{M}(P^+,U^+)=0$ in $\omega^{+}(0)$},
\\
& \text{$U^+=0$ on the parabolic boundary of $Q_{T}^{+}$},
\\
(\ast) & \text{$|J|=\dfrac{\Phi(0)}{\Phi(t)}\equiv \Psi(t)$ on $\Gamma_0\times [0,T]$},
\\
& \text{$P^+=0$ on $\partial\omega^{+}(0)$ and $\Gamma_0$ for all $t\in [0,T]$}.
\end{cases}
\end{equation}
Let us assume that problem \eqref{eq:split-U} has a solution $(U^+,P^+)$ which satisfies the
regularity assumptions of Lemma \ref{le:div-div}. The function $P^+$ automatically satisfies
then the boundary condition \eqref{eq:++} (a) on the lateral boundaries of $Q_{T}^{+}$.
Given a pair $(U^+,P^+)$, we may formulate the problem for $(U^-,P^-)$ in $Q_{T}^{-}$, which
should include the conditions of zero jumps of density and the normal velocity across the
interface $\Gamma_t$. The problem in $Q_{T}^{-}$ is formulated as follows:

\begin{equation}
\label{eq:split-p}
\begin{cases}
& \text{$\mathcal{L}(U^-,P^-)=0$ in $Q_{T}^{-}$},
\\
& \text{$\mathcal{M}(P^-,U^-)=0$ in $\omega^{-}(0)$},
\\
& \text{$U^-=0$ on the parabolic boundary of $Q_{T}^{-}$},
\\
(\ast\ast) & \text{$|J^-|=\Psi(t)$ on $\Gamma_0\times (0,T]$},
\\
& \text{$P^-=0$ on $\partial\omega^{-}(0)$},
\\
& \text{$[J^{-1}\cdot \nabla_yP\cdot \widetilde{\mathbf{n}^{+}_x}]_{\Gamma_0}
=[a\,J^{-1}\cdot \nabla_y (w_0|J^{-1}|)\cdot \widetilde{\mathbf{n}^{+}_x}]_{\Gamma_0}$ for
$t\in [0,T]$},
\end{cases}
\end{equation}
where the upper index ``+" indicates that the corresponding magnitudes are already defined
by the functions $(U^+,P^+)$. By $\widetilde{\mathbf{n}^{+}_x}$ we denote the exterior
normal vector to the hypersurface $\Gamma_t$ parametrized by the formula $X=(y+\nabla
U^+)|_{y\in \Gamma_0}$. The vector $\widetilde{\mathbf{n}^{+}}_x$ is well-defined if
$\Gamma_0\in C^{2+\alpha}$ - see Remark \ref{rem:normal-moving} below. Once problems
\eqref{eq:split-U}, \eqref{eq:split-p} are solved, the functions

\[
U=\begin{cases} U^+ & \text{in  $Q^{+}_{T}$},
\\
U^- & \text{in  $Q^{-}_{T}$},
\end{cases}
\qquad \qquad \widetilde p=\begin{cases} P^+ & \text{in  $Q^{+}_{T}$},
\\
P^- & \text{in  $Q^{-}_{T}$}
\end{cases}
\]
define a solution of problem \eqref{eq:+}-\eqref{eq:++}.

Due to Theorem \ref{th:lagr-euler}, to solve problem \eqref{eq:aux-Lagrange} it suffices to
construct functions $U^\pm$, $P^\pm$ that satisfy the assumptions of Theorem
\ref{th:interface-normal} (or Theorem \ref{th:interface}).

\begin{remark}
\label{rem:1} Let the conditions of Theorem \ref{th:lagrange-1} be fulfilled. In order to
construct a solution of problem \eqref{eq:dirichlet} we omit condition $(\ast)$ in
\eqref{eq:split-U} and substitute condition $(\ast\ast)$ in \eqref{eq:split-p} by

\begin{equation}
\label{eq:J^-} \text{$|J^-|=|J^+|$ on $\Gamma_0\times [0,T]$}\qquad \text{(condition $(i)$
of Theorem \ref{th:interface})}
\end{equation}
Condition $(ii)$ of Theorem \ref{th:interface} has to be checked \emph{a posteriori}.
\end{remark}

\section{Problem in the annular cylinder $Q_{T}^{+}$}
\label{sec:Lagr-+}

Nonlinear problems similar to \eqref{eq:split-U}, \eqref{eq:split-p} were already studied in
\cite{Diaz-Shmarev-2009-1,Diaz-Shmarev-2009-2}. By this reason we confine ourselves to
presenting the main ideas of the proofs and omit the technical details.

We begin with problem \eqref{eq:split-U} posed in $Q_{T}^{+}$. To decouple the system of
equations for $U^+$ and $P^+$ we solve first the nonlinear equation $\mathcal{L}(U^+,P)=0$
considering $P$ as a given function from a suitable function space, and then solve the
linear elliptic equation $\mathcal{M}(P^+,U)=0$ with a given $U$. The solutions of these
equations generate an operator $\chi:\,(U,P)\mapsto (U^+,P^+)$. We show that the operator
$\chi$ has a fixed point, which is the sought solution of system \eqref{eq:split-U}.

\subsection{The function spaces}

Let $q>n+2$. We introduce the Banach spaces%

\[
\begin{split}
& \displaystyle \mathcal{Z}^{+} =\left\{  U:\;
\begin{array}
[c]{l} U \in W^{4}_{q}(Q_{T}^{+}),\;U_{t}\in W^{2}_{q}
(Q_{T}^{+}),\\
\mbox{$U=0$ on the parabolic boundary of $Q_{T}^{+}$}
\end{array}
\right\} , \\
& \mathcal{Y}^{+}=\left\{ f:\;f\in W^{2}_{q}(Q_{T}^{+})\right\},
\\
& \mathcal{X}^+ = \{\phi: \,\phi\in W^{2,1}_{q}(Q_{T}^{+}),\,\text{$\phi(y,0)=0$ in
$\omega^{+}(0)$}\}
\end{split}
\]
with the norms

\[
\|u\|^{(k)}_{q,Q_{T}^{+}}:=\|u\|_{W_{q}^{k}(Q_T^{+})}=\sum_{0\leq |\gamma|\leq
k}\|D_{y}^{\gamma}u\|_{q,Q_{T}^{+}},
\]

\[
\Vert U\Vert_{\mathcal{Z}^{+}}=\Vert U\Vert_{q,Q_{T}^{+}}^{(4)}+\Vert
U_{t}\Vert_{q,Q_{T}^{+}}^{(2)}, \qquad \Vert f\Vert_{\mathcal{Y}^{+}}=\Vert
f\Vert_{q,Q_{T}^{+}}^{(2)},\qquad
\|\phi\|_{\mathcal{X}^+}=\|\phi\|_{W^{2,1}_{q}(Q_{T}^{+})}.
\]
By $C^{\alpha}(Q^+_T)$, $\alpha\in (0,1)$, we denote the space of H\"older-continuous
functions equipped with the norm

\[
\big\langle v \big\rangle_{Q^+_T}^{(\alpha)} =\sup_{Q_{T}^{+}}|v|+\sup_{(x,t),(y,\tau)\in
Q^+_T}\dfrac{|v(x,t)-v(y,\tau)|}{|x-y|^{\alpha}+|t-\tau|^{\alpha/2}}.
\]
The embedding theorems yield that since $D^{2}_{{y}}U\in W^{2,1}_{q}(Q_{T}^{+})$ with
$q>n+2$, then

\begin{equation}
\label{eq:Hold} \forall\,U\in\mathcal{Z}^{+}\quad\sum_{|\gamma|=2,3}\big\langle
D^{\gamma}_{{y}}U\big\rangle^{(\alpha)}_{Q^{+}_{T}}\leq C \|U\|_{\mathcal{Z}^{+}}
\end{equation}
with some $\alpha\in(0,1)$ (see, e.g., \cite[Ch.2,~Lemma 3.3]{LSU}). Since $U({y},0)=0$, it
follows that

\begin{equation}
\label{eq:Holder-prim}\sum_{|\gamma|=2,3}\sup_{Q_{T}^{+}}|D^{\gamma }_{{y}}U| \leq
CT^{\alpha/2}\|U\|_{\mathcal{Z}^{+}}.
\end{equation}
Denote by $J$ the Jacobi matrix of the transformation $y\mapsto y+\nabla U$ and represent it
in the form $J=I+H(U)$, where $H(U)$ is the Hessian of $U$, $H_{ij}(U)=D^{2}_{ij}(U)$.
Estimate \eqref{eq:Holder-prim} allows us to choose $T$ so small that for every $U\in
\mathcal{Z}^+$, $\|U\|_{\mathcal{Z}^+}\leq 1$, the elements of the Jacobi matrix $J=I+H(U)$
and the Jacobian satisfy the estimates

\begin{equation}
\label{eq:Jacobian} \sup_{Q_{T}^{+}}|J_{ij}|\leq \delta_{ij}+C\,T^{\alpha/2},\qquad
\sup_{Q_{T}^{+}}\left||J|-1\right|\leq C\,T^{\alpha/2}\|U\|_{\mathcal{Z}^+}
\end{equation}
with an independent of $U$ constant $C$.

\subsection{The nonlinear parabolic problem}
Let $P\in \mathcal{Y^+}$ be given. Denote

\[
\mathcal{H}(U)=(\mathcal{H}_1(U),\mathcal{H}_{2}(U)), \quad
\begin{cases} \mathcal{H}_{1}(U)=\mathcal{L}(U,P) & \text{in $Q_{T}^{+}$},
\\
\mathcal{H}_{2}(U)=|J|-\Psi(t) & \text{on $\Gamma_0\times [0,T]$}.
\end{cases}
\]
The solution of the nonlinear problem

\begin{equation}
\label{eq:nonlinear-+} \mathcal{H}(U)=0,\quad U\in \mathcal{Z^+}
\end{equation}
is constructed by means of the modified Newton's method.

\begin{theorem}{\cite[Ch.~X]{Kolm-Fomin}}
\label{th:Newton} Let $\mathcal{X},\,\mathcal{Y}$  be Banach spaces. Assume that

\begin{enumerate}

\item the operator $\mathcal{H}(U):\,\mathcal{X}\mapsto \mathcal{Y}$ has the strong
differential $\mathcal{H}'(\cdot)$ in a ball $B_r(0)\subset \mathcal{X}$,

\item the operator $\mathcal{H}'(V)$ is Lipschitz-continuous in $B_r(0)$,

\[
\|\mathcal{H}^{\prime}(U_{1})-\mathcal{H}^{\prime}(U_{2})\|\leq L\,\|U_{1} -U_{2}\|,\qquad
L=const,
\]

\item there exists the inverse operator $\left[ \mathcal{H}'(0)\right] ^{-1}$ and

\[
\left\| \left[ \mathcal{H}^{\prime}(0)\right] ^{-1}\right\| =M,\qquad\left\| \left[
\mathcal{H}^{\prime}(0)\right] ^{-1}\langle\mathcal{H}(0)\rangle \right\| =\Lambda.
\]
\end{enumerate}
Then, if $\lambda=M\Lambda L<1/4$, the equation $\mathcal{H}(U)=0$ has a unique solution
$U^{\ast}$ in the ball $B_{\Lambda t_{0}}(0)$, where $t_{0}$ is the least root of the
equation $\lambda\,t^{2}-t+1=0$. The solution $U^{\ast}$ is obtained as the limit of the
sequence
\begin{equation}
\label{eq:iteration} U_{n+1} =U_{n}-[\mathcal{H}'(0)]^{-1}
\langle\mathcal{H}(U_{n})\rangle,\qquad U_{0}=0.
\end{equation}
\end{theorem}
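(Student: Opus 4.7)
The plan is to recast the equation $\mathcal{H}(U)=0$ as a fixed-point problem for the modified Newton operator
\[
G(U):=U-[\mathcal{H}'(0)]^{-1}\langle\mathcal{H}(U)\rangle,
\]
whose fixed points in $B_r(0)$ coincide with the zeros of $\mathcal{H}$ there, since $[\mathcal{H}'(0)]^{-1}$ is an isomorphism. The iteration \eqref{eq:iteration} is exactly $U_{n+1}=G(U_n)$ with $U_0=0$, so everything reduces to showing that $\{U_n\}$ is Cauchy inside a suitable ball and identifying its limit.

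First I would use hypotheses (2)--(3) to produce a pointwise estimate on $G'$. Differentiating gives $G'(U)=[\mathcal{H}'(0)]^{-1}(\mathcal{H}'(0)-\mathcal{H}'(U))$, whence $\|G'(U)\|\leq ML\|U\|$. Combining this with the identity
\[
U_{n+1}-U_n=-[\mathcal{H}'(0)]^{-1}\bigl[\mathcal{H}(U_n)-\mathcal{H}(U_{n-1})-\mathcal{H}'(0)(U_n-U_{n-1})\bigr]
\]
and the standard integral representation of the bracket via $\mathcal{H}'$ on the segment $[U_{n-1},U_n]$ yields the key recursion
\[
\|U_{n+1}-U_n\|\leq ML\,\max(\|U_n\|,\|U_{n-1}\|)\,\|U_n-U_{n-1}\|,
\]
while the base case $\|U_1-U_0\|=\|[\mathcal{H}'(0)]^{-1}\langle\mathcal{H}(0)\rangle\|=\Lambda$ is given by hypothesis.

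The central step, and the one I expect to be the main obstacle, is the scalar majorant argument: the recursion above couples $\|U_n\|$ and $\|U_n-U_{n-1}\|$, so neither quantity can be bounded without the other. I would compare $\{U_n\}$ with the scalar sequence $s_0=0$, $s_{n+1}=\lambda s_n^{2}+1$, which is the modified Newton iteration applied to $\varphi(t)=\lambda t^2-t+1$ and which, for $\lambda<1/4$, is monotone increasing with limit the smaller root $t_0$. By a simultaneous induction on $n$ I would then prove the twin bounds
\[
\|U_n\|\leq\Lambda s_n,\qquad \|U_n-U_{n-1}\|\leq\Lambda(s_n-s_{n-1}),
\]
the inductive step using the algebraic identity $s_{n+1}-s_n=\lambda(s_n+s_{n-1})(s_n-s_{n-1})$ to match the estimate above. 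Convergence $s_n\uparrow t_0$ then forces $\{U_n\}$ to be Cauchy; passing to the limit in \eqref{eq:iteration} and using continuity of $\mathcal{H}$ gives a fixed point $U^*\in B_{\Lambda t_0}(0)$ with $\mathcal{H}(U^*)=0$.

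For uniqueness inside $B_{\Lambda t_0}(0)$, if $U^*,V^*$ are two fixed points of $G$ there, the mean-value inequality together with $\|G'(U)\|\leq ML\|U\|$ gives $\|U^*-V^*\|\leq ML\Lambda t_0\,\|U^*-V^*\|=\lambda t_0\,\|U^*-V^*\|$; the identity $\lambda t_0^{2}-t_0+1=0$ rewrites as $\lambda t_0=1-1/t_0<1$, forcing $U^*=V^*$. In summary, the routine pieces are the differentiation and Taylor-type expansion of $G$, while the genuinely nontrivial ingredient is the choice and monotonicity analysis of the quadratic majorant $\varphi$, which is precisely what turns the coupled estimate into a workable induction.
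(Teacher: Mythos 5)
The paper does not prove this theorem: it is quoted verbatim from \cite[Ch.~X]{Kolm-Fomin} and used as a black box, so there is no ``paper's own proof'' to compare against. Your argument is correct and reproduces the standard Kantorovich-style majorant proof of the modified Newton method, which is essentially the one given in the cited source: the recursion $\|U_{n+1}-U_n\|\le ML\max(\|U_n\|,\|U_{n-1}\|)\|U_n-U_{n-1}\|$, the quadratic majorant $\varphi(t)=\lambda t^2-t+1$ with scalar iterates $s_{n+1}=\lambda s_n^2+1\uparrow t_0$, the twin inductive bounds $\|U_n\|\le\Lambda s_n$, $\|U_n-U_{n-1}\|\le\Lambda(s_n-s_{n-1})$, and the contraction estimate $\lambda t_0=1-1/t_0<1$ for uniqueness are all exactly the ingredients of that proof. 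The one point you leave tacit, as does the theorem statement itself, is the a priori requirement $\Lambda t_0\le r$ so that the iterates and the segments joining them stay inside $B_r(0)$ where the strong differential and the Lipschitz bound are available; this is implicit in the hypotheses and does not affect correctness.
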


Item (2) of Theorem \ref{th:Newton} means that the strong and weak defferentials of
$\mathcal{H}$ coincide and can be found by means of linearization of the operator
$\mathcal{H}$ at the initial state $U_0= 0$. Let us denote $J=I+H(U)$, where $H(U)$ is the
Hessian matrix of $U$, $H_{ij}(U)=D^{2}_{ij}(U)$. We have to compute

\[
\frac{d}{d\epsilon}\mathcal{H}(\epsilon U)=\frac{d}{d\epsilon}\operatorname{div}\left(
\epsilon \,({I} +\epsilon\,{H}(U))\nabla U_{t}+\nabla\left(  w_0 \,|{I}+\epsilon
{H}(U)|^{-1}\rangle-P\right) \right)
\]
at $\epsilon=0$. Since ${H}(U)$ is symmetric, for every fixed $(y,t)\in Q_{T}^{+}$ the
matrix $H(\epsilon\,U(y,t))$ is equivalent to the diagonal matrix with the eigenvalues
$\lambda_i$, $i=1,\ldots,n$. It follows that $|I+\epsilon
H(U)|=\prod\limits_{i=1}^{n}(1+\epsilon\lambda_i)$ and

\[
\left.\dfrac{d}{d\epsilon}\mathcal{H}_{2}(\epsilon
U)\right|_{\epsilon=0}=\dfrac{d}{d\epsilon}|J||_{\epsilon=0}
=\sum_{i=1}^{n}\operatorname{trace}\,H(U)=\Delta U.
\]
It is easy to see now that

\[
\left.\frac{d}{d\epsilon}\mathcal{H}_1(\epsilon\,U)\right|_{\epsilon=0}=\Delta
\left(U_t-a_+w_0\,\Delta U\right)
\]
and the linearized equation $\mathcal{H}'(0)(U)=\left(\Delta f,\phi\right)$ takes the form:
given $g\in \mathcal{Y}^+$, $\phi\in \mathcal{X}^+$, find a function $U\in \mathcal{Z}^+$
such that

\begin{equation}
\label{eq:par-linear} \begin{cases} &\Delta\left(U_t-a_+w_0\Delta U\right)=\Delta g\in
L^{q}(Q^{+}_T),
\\
& \left(\Delta U-\phi\right)|_{\Gamma_0\times [0,T]}=0.
\end{cases}
\end{equation}

\begin{lemma}
\label{th:par-linear} For every $(g,\phi)\in\mathcal{Y}^{+}\times \mathcal{X}^+$ problem
$(\ref{eq:par-linear})$ has at least one solution $U\in\mathcal{Z}^{+}$ satisfying the
estimate

\begin{equation}
\label{eq:est-tilde}\|U\|_{\mathcal{Z}^{+}}\leq C\,\left(\|g\|_{\mathcal{Y}
^{+}}+\|\phi\|_\mathcal{X^+}\right),\quad C\equiv C\left( n,q,\sup w_0,\inf
w_0,\|w_0\|^{(2)}_{q,\omega^+(0)}\right).
\end{equation}
\end{lemma}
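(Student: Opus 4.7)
The plan is to exploit the factorization of the linear operator on the left as $\Delta \circ (\partial_t - a_+ w_0 \Delta)$, a spatial Laplacian composed with a uniformly parabolic operator of second order. Introducing the auxiliary unknown $V := U_t - a_+ w_0 \Delta U$, the PDE becomes $\Delta(V - g) = 0$ at every fixed time, while on $\Gamma_0$ the conditions $U \equiv 0$ (hence $U_t = 0$) together with the prescribed boundary value $\Delta U = \phi$ force $V = -a_+ w_0 \phi$ there. The strategy is therefore to construct $V$ first by solving a slice-wise elliptic problem whose data are entirely determined by $g$ and $\phi$, and then to recover $U$ from a standard second-order parabolic problem with $V$ as the forcing term.

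For the elliptic step I would write $V = g + h$, where $h(\cdot, t)$ is the harmonic function in $\omega^+(0)$ with boundary values
\[
h|_{\partial \omega^+(0)} = 0, \qquad h|_{\Gamma_0} = -a_+ w_0 \phi(\cdot, t) - g(\cdot, t)|_{\Gamma_0}.
\]
The choice $h = 0$ on the outer component is arbitrary but convenient; the only constraint imposed by the original problem is on $\Gamma_0$. Since $\partial \omega^+(0), \Gamma_0 \in C^{2+\alpha}$, the trace of $g \in W^2_q(Q_T^+)$ on $\Gamma_0 \times [0,T]$ lies in $W^{2-1/q}_q$ in space, and likewise for $\phi \in W^{2,1}_q$. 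Elliptic $L^q$ theory for the Dirichlet problem in the annular domain $\omega^+(0)$ then gives $h \in W^2_q(Q_T^+)$ and
\[
\|V\|_{\mathcal{Y}^+} \leq \|g\|_{\mathcal{Y}^+} + \|h\|_{\mathcal{Y}^+} \leq C\bigl(\|g\|_{\mathcal{Y}^+} + \|\phi\|_{\mathcal{X}^+}\bigr).
\]

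For the parabolic step I would solve
\[
U_t - a_+ w_0 \Delta U = V \quad \text{in } Q_T^+, \qquad U = 0 \text{ on the parabolic boundary of } Q_T^+.
\]
Since $w_0$ is strictly positive and of class $C^{2+\alpha}$, the operator is uniformly parabolic with smooth coefficients and the Ladyzhenskaya--Solonnikov--Ural'tseva $L^q$ theory furnishes a unique solution $U \in W^{2,1}_q(Q_T^+)$ satisfying $\|U\|_{W^{2,1}_q} \leq C\|V\|_{q, Q_T^+}$. The boundary requirement $\Delta U|_{\Gamma_0} = \phi$ is then automatic: evaluating the equation on $\Gamma_0$, where $U_t = 0$, yields $-a_+ w_0 \Delta U = V = -a_+ w_0 \phi$.

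The hard part will be the bootstrap to the anisotropic regularity $U \in W^4_q(Q_T^+)$ and $U_t \in W^2_q(Q_T^+)$ required for membership in $\mathcal{Z}^+$. My plan is to differentiate the parabolic equation twice in space: the function $W = U_{y_i}$ satisfies
\[
W_t - a_+ w_0 \Delta W = V_{y_i} + a_+(w_0)_{y_i}\Delta U,
\]
a parabolic problem with $L^q$ right-hand side by the previous step and the inclusion $V \in W^2_q$; reapplying the $L^q$ theory upgrades $U$ to $W^3_q$ in space with $\nabla U_t \in L^q$. A second differentiation, whose forcing again belongs to $L^q$ because $V \in W^2_q$ and $U \in W^3_q$, produces $U \in W^4_q$ and $U_t \in W^2_q$. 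The principal technical subtlety is the verification of the compatibility conditions at the corner sets $(\partial \omega^+(0) \cup \Gamma_0) \times \{t = 0\}$ required to invoke higher-order $L^q$ theory at each bootstrap stage; these follow from $U(\cdot, 0) \equiv 0$, from $\phi(\cdot, 0) = 0$ (built into the definition of $\mathcal{X}^+$), and from the $C^{2+\alpha}$-regularity of $w_0$, $\partial \omega^+(0)$, and $\Gamma_0$. Collecting all intermediate estimates yields the announced bound with $C$ depending only on $n$, $q$, $\sup w_0$, $\inf w_0$, and $\|w_0\|^{(2)}_{q, \omega^+(0)}$.
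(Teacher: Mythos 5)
Your elliptic step matches the paper's construction almost verbatim: you set $V:=U_t-a_+w_0\Delta U$, observe that $V-g$ must be harmonic at each time slice, and fix $V$ by choosing the boundary trace of the harmonic part so that the condition $\Delta U=\phi$ on $\Gamma_0$ is enforced through the equation and $U_t|_{\Gamma_0}=0$. (Your sign on $\Gamma_0$, $g+h=-a_+w_0\phi$, is in fact the correct one; the paper writes $g+G=a_+w_0\phi$, which would yield $\Delta U=-\phi$, so you have quietly fixed a typo. The different choice of trace on $\partial\omega^+(0)$ is immaterial, as you say.) Your first parabolic step, solving $U_t-a_+w_0\Delta U=V$ with zero data on the parabolic boundary to get $U\in W^{2,1}_q(Q_T^+)$, also matches.

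The gap is in the bootstrap to $\mathcal{Z}^+$. You propose to differentiate the parabolic equation in a spatial direction and feed $W=U_{y_i}$ into the $L^q$ parabolic theory. But $W$ has no prescribed boundary condition on the parabolic boundary of $Q_T^+$: the identity $U=0$ there controls only tangential derivatives, so the trace of a generic $U_{y_i}$ — in particular the normal derivative — is an unknown, not a datum. Without boundary data, the Ladyzhenskaya--Solonnikov--Ural'tseva estimate for $W$ cannot be invoked, and the bootstrap does not close as written. The compatibility conditions at the corners, which you single out as ``the principal technical subtlety,'' are a secondary issue compared with this one. The paper avoids the problem by taking $\Delta$ (not $\partial_{y_i}$) of the equation: since $\Delta(g+G)=\Delta g$, the function $Z:=\Delta U$ solves the well-posed Dirichlet IBVP
\begin{equation*}
Z_t-a_+\Delta(w_0 Z)=\Delta g \quad\text{in }Q_T^+,\qquad Z=0\ \text{on }\partial\omega^+(0)\times[0,T],\qquad Z=\phi\ \text{on }\Gamma_0\times[0,T],
\end{equation*}
with $Z(\cdot,0)=0$ because $U(\cdot,0)=0$ and $\phi(\cdot,0)=0$. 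One application of the $L^q$ parabolic theory to $Z$ gives $\Delta U\in W^{2,1}_q(Q_T^+)$; elliptic regularity in $\omega^+(0)$ then upgrades $U$ to $W^4_q$ in space, and $U_t\in W^2_q$ follows from reading $U_t=a_+w_0\Delta U+g+G$ directly off the equation. Replacing your differentiation step by this observation (or, more laboriously, differentiating only tangentially and recovering normal derivatives from the equation) would close the argument.
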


\begin{proof} The proof follows \cite[Th.~9]{Diaz-Shmarev-2009-1} with obvious
modifications due to the form of the equation: instead of dealing with the heat equation now
we have to study problem \eqref{eq:par-linear} for a linear uniformly parabolic equation.
Let $U$ be a solution of the problem

\[
\begin{cases}
& U_t-a_+w_0\Delta U=  g+G \in L^{q}(Q_{T}^{+}),
\\
& \text{$U=0$ on the parabolic boundary of $Q_{T}^{+}$}
\end{cases}
\]
with a harmonic in $\omega^+(0)$ function $G$ to be defined. For every $g,\,G\in
L^{q}(Q^{+}_T)$ this problem has a unique solution $U\in W^{2,1}_{q}(Q_{T}^{+})$ which
satisfies the estimate

\begin{equation}
\|U\|_{W^{2,1}_{q}(Q_{T}^{+})}\leq
C\,(\|g\|_{q,Q_{T}^{+}}+\|G\|_{q,Q_{T}^{+}})\label{eq:reg-W}
\end{equation}
with a constant $C$ depending only on $q$, $n$, $\sup w_0$ and $\inf w_0$ (see
\cite[Ch.4,~Sec.9]{LSU}). Let us take for $G$ the solution of the Dirichlet problem

\[
\begin{cases}
& \text{$\Delta G(\cdot,t)=0$ in $\omega^+(0)$},
\\
& \text{$G(\cdot,t)+g(\cdot,t)=0$ on $\partial\omega^+(0)$},
\\
& \text{$G(\cdot,t)+g(\cdot,t)=a_+w_0(\cdot)\phi(\cdot, t)$ on $\Gamma_0$}.
\end{cases}
\]
(The boundary conditions are understood in the sense of traces). The function $G$ is
uniquely defined and satisfies the estimate

\[
\|G(\cdot,t)\|_{W^{2}_{q}(\omega^+(0))}\leq
C\,\left(\|g(\cdot,t)\|_{W_{q}^{2}(\omega^+(0))}+\|\phi(\cdot,t)\|_{W_{q}^{2}(\omega^{+}(0))}\right)\quad
\forall\,{\rm a.e.}\,t\in (0,T),
\]
which gives

\[
\|G\|_{W^{2}_{q}(Q_{T}^{+})}\leq
C\left(\|g\|_{W^{2}_{q}(Q_{T}^{+})}+\|\phi\|_{W^{2}_{q}(Q_{T}^{+})}\right).
\]
By construction

\[
\Delta (U_t-a_+w_0\Delta U-g)=\Delta G=0
\]
$U(y,0)=0$ and $U_t$ on $\partial\omega^{+}(0)\times [0,T]$. By the choice of $G$ the
function $g+G$ has zero trace on $\partial\omega^+(0)\times [0,T]$, while $g+G-a_+w_0\phi$
has zero trace on $\Gamma_0\times [0,T]$. By virtue of the equation for $U$ we have that
$\Delta U=0$ on $\partial\omega^+(0)\times [0,T]$ and $\Delta U=\phi$ on $\Gamma_0\times
[0,T]$.  It follows that $V=\Delta U$ solves the problem

\[
\begin{cases}
& V_t-a_+\Delta(w_0V)=\Delta g\in L^{q}(Q_{T}^{+})\quad \text{in $Q_{T}^{+}$},
\\
& \text{$V=0$ on $\partial\omega^+(0)\times [0,T]$},
\\
& \text{$V-\phi=0$ on $\Gamma_0\times [0,T]$}
\end{cases}
\]
and satisfies the estimate

\[
\|\Delta U\|_{W_{q}^{2,1}(Q_{T}^{+})}=\|V\|_{W_{q}^{2,1}(Q_{T}^{+})}\leq C \left(\|\Delta
g\|_{q,Q_{T}^{+}}+\|\phi\|_{W^{2}_{q}(Q_{T}^{+})}\right)
\]
with $C$ depending also on $\|w_0\|^{(2)}_{q,\omega^+(0)}$ (see \cite[Ch.4,~Sec.9]{LSU}).
Gathering this estimate with \eqref{eq:reg-W} we obtain \eqref{eq:est-tilde}.
\end{proof}

\begin{corollary}
\label{cor:constants}
\[
\begin{split}
& \left\| \left[ \mathcal{H}^{\prime}(0)\right] ^{-1}\right\| =M\leq C,
\\
& \left\| \left[ \mathcal{H}^{\prime}(0)\right] ^{-1}\langle\mathcal{H}(0)\rangle \right\|
=\Lambda\leq C\left(a_+T^{1/q}\|\Delta w_0\|_{q,\omega^+(0)}+\|P\|_{\mathcal{Y}^+}\right)
\\
&\qquad \qquad +T^{1/q}|\omega^{+}(0)|\left(\max_{[0,T]}|1-\Psi(t)|
+\max_{[0,T]}|\Psi'(t)|\right)
\end{split}
\]
with the constant $C$ from \eqref{eq:est-tilde}.
\end{corollary}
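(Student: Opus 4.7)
My plan is to derive both estimates directly from Lemma \ref{th:par-linear}. The first bound is essentially tautological: $[\mathcal{H}'(0)]^{-1}$ is by construction the solution map of the linearized problem \eqref{eq:par-linear}, and the linear estimate \eqref{eq:est-tilde} furnishes the operator norm bound $M \leq C$ with the constant recorded in the lemma (depending only on $n$, $q$, $\sup w_0$, $\inf w_0$, and $\|w_0\|^{(2)}_{q,\omega^+(0)}$).

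For $\Lambda$, the first step is to evaluate $\mathcal{H}(0)$ explicitly. Setting $U\equiv 0$ gives $J = I$, $|J|=1$, and $\nabla_y U_t = 0$, so from the definitions of $\mathcal{L}$ and $\mathcal{H}_2$,
\[
\mathcal{H}_1(0) = \operatorname{div}_y(a_+\nabla_y w_0 - \nabla_y P) = \Delta(a_+w_0 - P), \qquad \mathcal{H}_2(0) = 1 - \Psi(t) \ \text{ on } \Gamma_0 \times [0,T].
\]
I also note the compatibility $\Psi(0) = \Phi(0)/\Phi(0) = 1$, so $\mathcal{H}_2(0)$ vanishes at $t=0$ as required for membership in the relevant trace class.

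I then identify $\mathcal{H}(0)$ with a pair $(g,\phi)\in \mathcal{Y}^+\times\mathcal{X}^+$ to feed into \eqref{eq:est-tilde}. Take $g = a_+w_0 - P$, so that $\Delta g = \mathcal{H}_1(0)$, and choose the spatially constant extension $\phi(y,t) = 1-\Psi(t)$, which lies in $\mathcal{X}^+$ because $\phi(\cdot,0)=0$. Since $w_0$ is time-independent,
\[
\|a_+w_0\|_{W^2_q(Q_T^+)} = a_+T^{1/q}\|w_0\|_{W^2_q(\omega^+(0))},
\]
which under the sharper form of the proof of Lemma \ref{th:par-linear} (where the top-order part of $U$ in $\mathcal{Z}^+$ is controlled by $\|\Delta g\|_{q,Q_T^+}$ alone, not the full $W^2_q$ norm of $g$) reduces to $a_+T^{1/q}\|\Delta w_0\|_{q,\omega^+(0)}$. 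The spatially constant extension contributes
\[
\|\phi\|_{\mathcal{X}^+} \leq C|\omega^+(0)|^{1/q} T^{1/q}\bigl(\max_{[0,T]}|1-\Psi(t)| + \max_{[0,T]}|\Psi'(t)|\bigr),
\]
and summing the two pieces yields the asserted bound for $\Lambda$.

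The only delicate point is the need for the refined form of the linear estimate, in order to obtain $\|\Delta w_0\|_{q,\omega^+(0)}$ rather than the full $W^2_q$ norm of $w_0$. This requires tracking through the proof of Lemma \ref{th:par-linear} that the $W^{2,1}_q$ bound on $\Delta U$ depends only on $\Delta g$ and on $\phi$, so that the corresponding fourth-order part of $\|U\|_{\mathcal{Z}^+}$ inherits the sharper dependence; the remaining terms in $\|U\|_{\mathcal{Z}^+}$ contribute the $\|P\|_{\mathcal{Y}^+}$ piece. Everything else is a direct bookkeeping computation.
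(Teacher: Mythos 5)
Your proposal is correct and takes the same route as the paper: the paper's one-line proof is precisely to apply the linear estimate \eqref{eq:est-tilde} to the explicit expressions $\mathcal{H}_1(0)=a_+\Delta w_0-\Delta P$ and $\mathcal{H}_2(0)=1-\Psi(t)$. You are also right to flag that \eqref{eq:est-tilde} as stated carries $\|g\|_{\mathcal{Y}^+}$ and hence would yield $\|w_0\|^{(2)}_{q,\omega^+(0)}$ rather than $\|\Delta w_0\|_{q,\omega^+(0)}$; the sharper dependence does indeed come from the $W^{2,1}_q$ estimate for $V=\Delta U$ inside the proof of Lemma \ref{th:par-linear} (combined with elliptic regularity and $U=U_t=0$ on the lateral boundary), a point the paper uses implicitly here and again before Theorem \ref{th:nonlinear-+} without comment, so your caveat is an accurate reading of what is actually being invoked.
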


\begin{proof}
The estimates follow from \eqref{eq:est-tilde} and the equalities $
\mathcal{H}_1(0)=a_+\Delta w_0-\Delta P$, $\mathcal{H}_2(0)=1-\Psi(t)$.
\end{proof}
To prove the existence of a unique solution of the equation $\mathcal{H}(U)=0$ in
$\mathcal{Z}^+$ amounts to checking Lipshitz-continuity of the linearized operator

\[
\begin{split}
\mathcal{H}'(V)(U) &  =\operatorname{div}\Big( {H} (U)\,\nabla V_{t}+({I}+{H}(V))\,\nabla
U_{t}\Big)
\\
& \qquad \qquad \qquad -a_+\nabla\left( \operatorname{trace}\,\left[
({I}+{H}(V))^{-1}{H}(U)\right] \right),
\\
\mathcal{H}'_{2}(V)(U) & =\operatorname{trace} \left[(I+H(V))^{-1}H(U)\right],
\end{split}
\]
which can be done exactly as in \cite{Diaz-Shmarev-2009-1} with the use of formulas
\eqref{eq:Holder-prim}:

\begin{equation}
\label{eq:Lip}\left\| (\mathcal{H}'_{i}(V_{1})-\mathcal{H}'_{i}(V_{2})\right) \langle
U\rangle\|\leq L\|V_{1}-V_{2} \|_{\mathcal{Z}^{+}}\|U\|_{\mathcal{Z} ^{+}}.
\end{equation}

\begin{theorem}
\label{th:linear-aux-exist} Let $P\in W^{2}_{q}(Q_{T}^{+})$ with $q>n+2$ and $\Psi(t)\in
C^{1}[0,1]$. Then there exists $T_{\ast}\in(0,1)$ so small that $\lambda=M\,L\,\Lambda<1/4$
with the constants $\Lambda$, $M$ and $L$ from Corollary $\ref{cor:constants}$, and problem
\eqref{eq:nonlinear-+} has a unique solution

\begin{equation}
\label{eq:ball}U\in {B}_{r}\quad\text{with}\quad  r <2\,\Lambda.
\end{equation}
\end{theorem}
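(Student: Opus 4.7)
The plan is a direct application of Theorem \ref{th:Newton} to the map $\mathcal{H}:\mathcal{Z}^+\to\mathcal{Y}^+\times (\text{traces on }\Gamma_0\times[0,T])$. All three hypotheses of that theorem have essentially been set up in the preceding discussion; what remains is to verify them carefully and to show that the smallness parameter $\lambda=ML\Lambda$ can be forced strictly below $1/4$ by choosing $T_{\ast}$ small.

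First I would fix the ball $B_1(0)\subset\mathcal{Z}^+$ and shrink $T$ so that the Jacobian bounds \eqref{eq:Jacobian} hold uniformly on it; this keeps $|J|$ bounded away from zero and makes $\mathcal{H}$ well-defined and smooth on $B_1(0)$. Since $\mathcal{H}_1$ depends polynomially on the entries of $J=I+H(U)$ and on $|J|^{-1}$ (with coefficients built from $w_0$ and $P$), and $\mathcal{H}_2=|J|-\Psi(t)$ is polynomial in $H(U)$, the strong (Fr\'echet) derivative exists throughout $B_1(0)$ and is given by the explicit formulas displayed just before the statement of the theorem. This verifies hypothesis (1) of Theorem \ref{th:Newton}.

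Hypothesis (3) is supplied by Lemma \ref{th:par-linear} together with Corollary \ref{cor:constants}: the linearized problem \eqref{eq:par-linear} has a unique solution satisfying \eqref{eq:est-tilde}, which gives $M$ independent of $T$, while $\Lambda$ is controlled by the data uniformly in $T\in(0,1]$. The crucial point for hypothesis (2) is the vanishing condition $U(y,0)=0$ built into $\mathcal{Z}^+$: by \eqref{eq:Holder-prim} every second- and third-order derivative of $U\in\mathcal{Z}^+$ picks up a factor $T^{\alpha/2}$ in its sup-norm. Writing $\mathcal{H}'(V_1)-\mathcal{H}'(V_2)$ as a bilinear expression in $H(V_1)-H(V_2)$, $H(V_1)$, $H(V_2)$, their inverses, and $w_0$, and substituting this Hölder estimate exactly as in the proof of \cite[Theorem 9]{Diaz-Shmarev-2009-1}, one obtains the Lipschitz bound \eqref{eq:Lip} with $L\leq CT^{\alpha/2}$ and $C$ independent of $T$.

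Combining these three ingredients one gets $\lambda=ML\Lambda\leq C'T^{\alpha/2}\bigl(1+\|P\|_{\mathcal{Y}^+}\bigr)$, which tends to zero as $T\to 0$. One therefore chooses $T_{\ast}\in(0,1)$ so small that $\lambda<1/4$; Theorem \ref{th:Newton} then yields a unique solution $U^\ast\in B_{\Lambda t_0}(0)$, where $t_0<2$ is the smaller root of $\lambda t^2-t+1=0$, which gives \eqref{eq:ball} with $r=\Lambda t_0<2\Lambda$. The main obstacle is the bookkeeping in the Lipschitz estimate: one must ensure that every difference term in $\mathcal{H}'(V_1)-\mathcal{H}'(V_2)$ absorbs at least one factor $T^{\alpha/2}$ from \eqref{eq:Holder-prim}, but this follows the template of \cite{Diaz-Shmarev-2009-1} and introduces no new ideas.
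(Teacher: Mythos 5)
Your proposal is correct and follows the same route the paper takes: apply the modified Newton method (Theorem \ref{th:Newton}) with $M$ from Lemma \ref{th:par-linear}, $\Lambda$ from Corollary \ref{cor:constants}, and the Lipschitz estimate \eqref{eq:Lip} obtained via \eqref{eq:Holder-prim} following \cite{Diaz-Shmarev-2009-1}, then shrink $T_{\ast}$ to force $\lambda<1/4$. One small remark: the smallness of $\lambda$ already follows from $\Lambda\to 0$ as $T_{\ast}\to 0$ (the $T^{1/q}$ factors in Corollary \ref{cor:constants} together with the absolute continuity of $\|P\|_{W^{2}_{q}(Q_{T_{\ast}}^{+})}$), so the stronger claim $L\leq CT^{\alpha/2}$ is not strictly needed; still, your bookkeeping with \eqref{eq:Holder-prim} is consistent with the paper's argument.
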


\begin{remark}
\label{rem:2} Under the conditions of Theorem \ref{th:interface} problem
\eqref{eq:nonlinear-+} transforms into the problem $\mathcal{H}_1(U)=0$, $U\in
\mathcal{Z}^+$, and the linearized problem \eqref{eq:par-linear} takes the form: find $U\in
\mathcal{Z}^+$ such that

\[
\Delta\left(U_t-a_+w_0\Delta U\right)=\Delta g\in L^{q}(Q^{+}_T).
\]
We may take for a solution the solution of \eqref{eq:nonlinear-+} with $\phi\equiv 0$. The
estimates of Corollary \ref{cor:constants} change in the obvious way,:

\[
\left\| \left[ \mathcal{H}^{\prime}(0)\right] ^{-1}\langle\mathcal{H}(0)\rangle \right\|
=\Lambda\leq C\left(a_+T^{1/q}\|\Delta w_0\|_{q,\omega^+(0)}+\|P\|_{\mathcal{Y}^+}\right).
\]
\end{remark}

\subsection{Linear elliptic problem}
Given $U\in \mathcal{Z}^+$, we consider now the equation $\mathcal{N}(P)\equiv
\mathcal{M}(P^+,U)=0$ in $Q_{T}^+$ under the homogeneous Dirichlet boundary conditions on
$\partial\omega^+(0)$ and $\Gamma_0$:

\begin{equation}
\label{eq:ellip-lin-1}
\begin{cases}
& \mathcal{N}(P)\equiv -\operatorname{div}_{y}\left(w_{0}(J^{-1})^{2}\nabla_y P\right)
+{f}_+ (w_0|J^{-1}|)|J|=0
\\
& \qquad \qquad \qquad \text{in $\omega^+(0)$, $t\in [0,T]$},
\\
& \text{$P=0$ on $\partial\omega^+(0)$ and $\Gamma_0$, $t\in [0,T]$}.
\end{cases}
\end{equation}

\begin{lemma}
\label{le:ellip-1} Let $w_0,\,D_{x_i}w_0\in L^{q}(\omega^{+}(0))$ and let $f_+$ be locally
Lipschitz-continuous. Then for every $U\in \mathcal{Z}^+$ with $\|U\|_{\mathcal{Z}^+}\leq 1$
problem \eqref{eq:ellip-lin-1} has a unique solution $P(\cdot,t)\in
W^{2}_{q}(\omega^{+}(0))$ such that

\begin{equation}
\label{eq:est-ell-1} \|P(\cdot,t)\|^{(2)}_{q,\omega^+(0)}\leq C
\|f_+(w_0|J|^{-1})\|_{q,\omega^+(0)}\quad \text{for a.e. $t\in (0,T)$}
\end{equation}
and

\begin{equation}
\label{eq:est-ellip-2} \|P\|_{\mathcal{Y}^{+}}\leq
C\,T^{1/q}\sup_{(0,T)}\|f_+(w_0|J|^{-1})\|_{q,\omega^+(0)}
\end{equation}
with a constant $C$ depending on $n$, $q$, $\sup w_0$, $\inf w_0$, $\|\nabla
w_0\|_{q,\omega^+(0)}$.
\end{lemma}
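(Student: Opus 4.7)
The equation in \eqref{eq:ellip-lin-1} is, for each fixed $t$, a linear elliptic Dirichlet problem for $P(\cdot,t)$ on the annular domain $\omega^+(0)$, with leading matrix $A(y,t)=w_0(y)(J^{-1}(y,t))^2$ and source $F(y,t)=-f_+(w_0|J^{-1}|)|J|$. My plan is to treat $t$ as a parameter, apply the classical $W^2_q$ regularity estimate to obtain \eqref{eq:est-ell-1} pointwise in $t$, and then integrate over $t$ to derive \eqref{eq:est-ellip-2}.

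Since $\|U\|_{\mathcal{Z}^+}\le 1$, the H\"older bound \eqref{eq:Holder-prim} gives $\sup_{Q^+_T}|D^\gamma U|\le CT^{\alpha/2}$ for $|\gamma|=2,3$, so $J=I+H(U)$ is a small perturbation of the identity when $T$ is sufficiently small. By \eqref{eq:Jacobian}, $|J|$ stays bounded away from zero, $J^{-1}$ is defined, and $(J^{-1})^2$ is uniformly positive definite; together with the two-sided bound on $w_0$ this makes $A$ uniformly elliptic. Moreover $w_0\in C^{2+\alpha}$ combined with $D^2U\in C^\alpha$ (from \eqref{eq:Hold}) gives $A\in C^\alpha(\overline{\omega^+(0)})$ uniformly in $t$. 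Since $f_+$ is locally Lipschitz on the bounded range of its argument and $|J|$ is bounded, the source $F(\cdot,t)\in L^\infty(\omega^+(0))\subset L^q(\omega^+(0))$ with a pointwise-in-$t$ bound controlled by $\|f_+(w_0|J^{-1}|)\|_{q,\omega^+(0)}$.

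Because $\partial\omega^+(0)$ and $\Gamma_0$ lie in $C^{2+\alpha}$ and the homogeneous Dirichlet data are trivially compatible, the Calder\'on-Zygmund / Agmon-Douglis-Nirenberg theory (see \cite[Ch.~4]{LSU}) yields, for each $t$, a unique $P(\cdot,t)\in W^2_q(\omega^+(0))$ satisfying
\[
\|P(\cdot,t)\|^{(2)}_{q,\omega^+(0)}\le C\,\|F(\cdot,t)\|_{q,\omega^+(0)},
\]
with $C$ depending on $n$, $q$, $\sup w_0$, $\inf w_0$ and $\|\nabla w_0\|_{q,\omega^+(0)}$; the last quantity enters through the first-order coefficients produced when the divergence-form operator is expanded as $w_0(J^{-1})^2_{ij}\,\partial_i\partial_j P + \partial_i(w_0(J^{-1})^2_{ij})\,\partial_j P$. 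This is \eqref{eq:est-ell-1}. Uniqueness is immediate from uniform ellipticity via the standard energy identity with test function $P$.

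To pass to \eqref{eq:est-ellip-2}, I raise the pointwise estimate to the $q$-th power, integrate over $[0,T]$ using that $\|F(\cdot,t)\|_{L^q}$ is uniformly bounded in $t$, and take the $q$-th root, which produces the factor $T^{1/q}$. Measurability of $t\mapsto P(\cdot,t)$ as a $W^2_q$-valued map follows from the continuous dependence on $A$ and $F$, which are themselves continuous in $t$ through $U\in\mathcal{Z}^+$. The only nontrivial step is the verification of uniform ellipticity of $A$ on the full cylinder $Q_T^+$, which reduces to the perturbation argument above and hence to the smallness of $T$ and the constraint $\|U\|_{\mathcal{Z}^+}\le 1$; once this is secured, the remainder is textbook linear elliptic theory.
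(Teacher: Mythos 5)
Your proof follows essentially the same route as the paper's: fix $t$, use \eqref{eq:Jacobian} (and the smallness of $T$) to ensure $J$ is a small perturbation of the identity so the leading matrix $w_0(J^{-1})^2$ is uniformly elliptic, invoke classical $W^2_q$ elliptic theory on the annular domain, and integrate the pointwise estimate over $(0,T)$ to produce the factor $T^{1/q}$. The extra detail you provide --- explaining where $\|\nabla w_0\|_{q}$ enters via the first-order terms after expanding the divergence form, and remarking on measurability of $t\mapsto P(\cdot,t)$ --- fills in points the paper leaves implicit, but does not change the argument; note only that the lemma assumes $w_0,D_{x_i}w_0\in L^q$ rather than $w_0\in C^{2+\alpha}$, though with $q>n+2$ the Sobolev embedding still gives the H\"older continuity of $w_0$ that you need for the leading coefficient.
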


\begin{proof}
Using \eqref{eq:Jacobian} we choose $T$ be so small that $||J|-1|\leq \dfrac{1}{2}$, which
entails the inequalities

\[
\dfrac{1}{2}\leq |J|\leq \dfrac{3}{2},\qquad \dfrac{2}{3}\leq |J^{-1}|\leq 2\quad \text{in
$Q_{T}^{+}$}.
\]
Moreover, by virtue of \eqref{eq:Jacobian} $J$ is strictly positive definite for small $t$.
For every fixed $t$ the existence of a solution to problem \eqref{eq:ellip-lin-1} follows
immediately from the standard elliptic theory - see, e.g., \cite[Ch.~3, Sec.~5, 15]{LU}) or
\cite{Grisvard}. The second estimate follows upon integration of \eqref{eq:est-ell-1} over
the interval $(0,T)$.
\end{proof}

For $t=0$ problem \eqref{eq:ellip-lin-1} takes the form

\begin{equation}
\label{eq:ellip-initial}
\begin{cases}
& -\operatorname{div}_{y}\left(w_{0}\nabla_y P_0\right) +{f}_+ (w_0)=0\quad \text{in
$\omega^+(0)$},
\\
& \text{$P_0=0$ on $\partial\omega^+(0)$ and $\Gamma_0$}.
\end{cases}
\end{equation}

\begin{lemma}
\label{le:ellip-conv} Under the conditions of Lemma \ref{le:ellip-1}

\[
\|P(\cdot,t)-P_0\|^{(2)}_{q,\omega^+(0)}\leq C\,t^{\alpha/2}\|U\|_{\mathcal{Z}^+}.
\]
\end{lemma}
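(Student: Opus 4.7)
The plan is to set $W = P(\cdot,t) - P_0$ and derive the elliptic problem it satisfies by subtracting \eqref{eq:ellip-initial} from \eqref{eq:ellip-lin-1}. This yields
\[
-\operatorname{div}_y\bigl(w_0(J^{-1})^2\nabla_y W\bigr) = \operatorname{div}_y\bigl(w_0\bigl((J^{-1})^2 - I\bigr)\nabla_y P_0\bigr) - \bigl(f_+(w_0|J|^{-1})|J| - f_+(w_0)\bigr),
\]
with $W = 0$ on $\partial\omega^+(0) \cup \Gamma_0$. By \eqref{eq:Jacobian}, for $T$ small the principal coefficient $w_0(J^{-1})^2$ is uniformly positive definite with Hölder-bounded entries, so the same $L^q$ elliptic theory invoked in Lemma \ref{le:ellip-1} applies and gives
\[
\|W\|^{(2)}_{q,\omega^+(0)} \leq C\bigl(\|F_1\|_{q,\omega^+(0)} + \|F_2\|_{q,\omega^+(0)}\bigr),
\]
where $F_1$ and $F_2$ denote the two terms on the right-hand side of the equation for $W$.

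The next step is to show that both $F_1$ and $F_2$ are of order $t^{\alpha/2}\|U\|_{\mathcal{Z}^+}$. Since $U(y,0) = 0$, the Hölder bound \eqref{eq:Hold} sharpens, for each fixed $t$, to the pointwise estimate $\|D^{\gamma}U(\cdot,t)\|_{\infty} \leq C t^{\alpha/2}\|U\|_{\mathcal{Z}^+}$ for $|\gamma| = 2,3$, exactly as in \eqref{eq:Holder-prim} but with $T$ replaced by $t$. Writing $J = I + H(U)$ and expanding $(I+H(U))^{-1}$, $|I+H(U)|$ and their squares around $H(U) = 0$ (legitimate since $\|H(U)\|_{\infty}$ is small), one obtains
\[
\|(J^{-1})^2 - I\|_{\infty} + \||J|-1\|_{\infty} + \||J|^{-1}-1\|_{\infty} + \|\nabla_y(J^{-1})^2\|_{\infty} \leq C t^{\alpha/2}\|U\|_{\mathcal{Z}^+}.
\]
For $F_1$, expand the divergence into a sum of products of $w_0$, $\nabla w_0$, $(J^{-1})^2 - I$, $\nabla_y(J^{-1})^2$, $\nabla P_0$ and $\nabla^2 P_0$; using $\|P_0\|^{(2)}_{q,\omega^+(0)} \leq C$ from Lemma \ref{le:ellip-1} and the Sobolev embedding $W^{2,q}(\omega^+(0)) \hookrightarrow C^{1}(\overline{\omega^+(0)})$ (valid since $q > n+2$), each term has one factor controlled in $L^{\infty}$ by $Ct^{\alpha/2}\|U\|_{\mathcal{Z}^+}$ and the remaining factors bounded independently of $U$. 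For $F_2$, split
\[
F_2 = |J|\bigl(f_+(w_0|J|^{-1}) - f_+(w_0)\bigr) + f_+(w_0)\bigl(|J|-1\bigr)
\]
and apply local Lipschitz continuity of $f_+$ on the bounded range of $w_0|J|^{-1}$, together with the $L^{\infty}$ estimate on $|J|^{-1} - 1$ and $|J| - 1$.

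The main technical bookkeeping is in $F_1$: one must carefully verify that in each of the terms arising from $\operatorname{div}_y\bigl(w_0((J^{-1})^2 - I)\nabla_y P_0\bigr)$ exactly one factor carries the small $L^{\infty}$ norm of order $t^{\alpha/2}\|U\|_{\mathcal{Z}^+}$ while the remaining factors are controlled either in $L^{\infty}$ (for $w_0$ and $\nabla P_0$) or in $L^q$ (for $\nabla w_0$ and $\nabla^2 P_0$). Beyond this routine but delicate bookkeeping, no new analytic ingredient is required: the proof relies only on \eqref{eq:Hold}--\eqref{eq:Holder-prim}, the a priori estimate from Lemma \ref{le:ellip-1}, Lipschitz continuity of $f_+$, and the Sobolev embedding.
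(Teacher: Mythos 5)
Your proof is correct and follows essentially the same route as the paper: subtract the $t=0$ elliptic problem from the $t>0$ problem, estimate the resulting right-hand side $F$ in $L^q$ via the smallness of $J-I$ (from \eqref{eq:Holder-prim} and $U(\cdot,0)=0$) and local Lipschitz continuity of $f_+$, then apply the $W^2_q$ elliptic estimate of Lemma \ref{le:ellip-1}. Your observation that the Hölder bound yields $t^{\alpha/2}$ rather than $T^{\alpha/2}$ is in fact more faithful to the lemma's stated estimate than the $T^{\alpha/2}$ appearing in the paper's own displayed bound on $\|F(\cdot,t)\|_q$.
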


\begin{proof}
The function $P-P_0$ solves the problem

\[
\text{$-\operatorname{div}_{y}\left(w_{0}(J^{-1})^{2}\nabla_y (P-P_0)\right) =F$ in
$\omega^+(0)$},\qquad \text{$P-P_0=0$ on $\partial\omega^+(0)$ and $\Gamma_0$}
\]
with the right-hand side

\[
\begin{split}
F & =-({f}_+
(w_0|J^{-1}|)|J|-f_+(w_0))-\operatorname{div}_{y}\left(w_{0}(I-(J^{-1})^{2})\nabla_y
P_0\right)
\\
& =-({f}_+ (w_0|J^{-1}|)-f_+(w_0)) +f_+(w_0)(|J|-1)
-\operatorname{div}_{y}\left(w_{0}(I-(J^{-1})^{2})\nabla_y P_0\right)
\end{split}
\]
Since $f$ is locally Lipschitz-continuous, it follows from \eqref{eq:Holder-prim} and
\eqref{eq:Jacobian} that

\[
\|F(\cdot,t)\|_{q,\omega^{+}(0)}\leq C\,T^{\alpha/2} \|U\|_{\mathcal{Z}^+}
\]
with a constant $C$ depending also on the Lipshitz constant of $f(s)$ on the interval
$|s|\leq 2\sup w_0$. The required estimate follows now from \eqref{eq:est-ell-1}.
\end{proof}

\subsection{Solution of the nonlinear system \eqref{eq:split-U}} Following \cite{Diaz-Shmarev-2009-1}
we consider the sequences $\{U_k\}$, $\{P_k\}$ defined as follows: $U_0=0$, $P_0$ is the
solution of problem \eqref{eq:ellip-initial}, for every $k\geq 1$ $U_k$ is the solution of
\eqref{eq:split-U} with $P=P_k$, $P_{k+1}$ is the solution of problem \eqref{eq:ellip-lin-1}
with $U=U_k$. Gathering the estimates on the solutions of problems \eqref{eq:split-U},
\eqref{eq:ellip-lin-1} we find that independently of $k$

\[
\begin{split}
& \|U_k\|_{\mathcal{Z}^+}\leq C \left(a_+T^{1/q}\|\Delta
w_0\|_{q,\omega^+(0)}+\|P_k\|_{\mathcal{Y}^+}\right), \qquad \|P_k\|_{\mathcal{Y}^+}\leq
C\,R\,T^{1/q}
\end{split}
\]
with $R=\sup\{|f(s)|:\;|s|\leq 2 \sup w_0\}$, provided that $T$ is sufficiently small. It
follows that, up to subsequences,

\begin{equation}
\begin{split}
& \text{$U_{k}\rightharpoonup U$ in $\mathcal{Z}^{+}$},\quad \text{$P_{k}\rightharpoonup P$
in $\mathcal{Y}^+$},
\\
& \text{$D_iP_k\to D_iP$,$\quad D^{2}_{ij}U_{k}\to D^{2}_{ij}U$ in
$C^{\alpha^{\prime},\alpha^{\prime}/2}( \overline{D}_{T}^{+})$}
\end{split}
\label{eq:converge}%
\end{equation}
with some $\alpha'\in (0,1)$. Denote

\[
J_{k}=(I+H(U_{k})),\qquad \mathbf{v}_{k}=J_{k}^{-1}\nabla\left( a_+\,w_0|J_{k}|^{-1}
-P_{k}\right).
\]
By the method of construction

\[
\int_{\omega^+(0)}\eta\operatorname{div}\left( w_{0}\left(\,J_{k}\nabla U_{k,t}-
\mathbf{v}_k\right)\right) \,d{y} =0
\]
for every smooth test-function $\eta$. Passing to the limit as $k\to \infty$ we find that
$(U,P)$ is the solution of problem \eqref{eq:split-U}. Moreover, the constructed solution
possesses the regularity properties required in Lemma \ref{le:div-div}.

\begin{theorem}
\label{th:nonlinear-+} Let $w_0\in W^{2}_{q}(\omega^+(0))$ be strictly positive in
$\overline{\omega^+(0)}$, $f$ be Lipschitz-continuous on the interval $|s|\leq 2 \sup w_0$,
and let $\partial\omega^+(0),\Gamma_0\in C^{2+\beta}$ with some $\beta\in (0,1)$. There
exists $T^\ast$, depending on $\|w_0\|^{(2)}_{q,\omega^{+}(0)}$, $n$, $q$, $a_+$, $\beta$
and the Lipschitz constant of $f$ such that in the cylinder $\omega^+(0)\times (0,T^\ast]$
problem \eqref{eq:split-U} has a unique solution $U\in \mathcal{Z}^+$, $P\in \mathcal{Y}^+$.
\end{theorem}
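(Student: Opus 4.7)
The strategy is to realize the decoupled system \eqref{eq:split-U} as the fixed point of the composition of two maps: the nonlinear solver for $U$ given $P$ (Theorem \ref{th:linear-aux-exist}) and the linear elliptic solver for $P$ given $U$ (Lemma \ref{le:ellip-1}). The construction is carried out via the iteration already written down just before the theorem statement:
\begin{equation}
\label{eq:sketch-iter}
U_0=0,\quad P_0 \text{ solves } \eqref{eq:ellip-initial}; \quad U_k \in \mathcal{Z}^+ \text{ solves } \mathcal{H}(U_k)=0 \text{ with } P=P_k, \quad P_{k+1}\in\mathcal{Y}^+ \text{ solves } \eqref{eq:ellip-lin-1} \text{ with } U=U_k.
\end{equation}

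First I would choose $T^\ast \in (0,1)$ so small that several constraints are simultaneously met: (a) the Jacobian estimate \eqref{eq:Jacobian} guarantees $\tfrac12 \le |J_k|\le \tfrac32$ whenever $\|U_k\|_{\mathcal{Z}^+}\le 1$; (b) the Newton contraction parameter $\lambda=ML\Lambda$ of Theorem \ref{th:linear-aux-exist} is $<1/4$, with $M$ fixed by Lemma \ref{th:par-linear} and $\Lambda$ controlled via Corollary \ref{cor:constants}; (c) the elliptic estimate \eqref{eq:est-ellip-2} combined with the smallness of $T^{1/q}$ forces $\|P_k\|_{\mathcal{Y}^+}\le CRT^{1/q}$, which in turn, via Theorem \ref{th:linear-aux-exist}, forces $\|U_k\|_{\mathcal{Z}^+}\le 2\Lambda\le 1$. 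A single smallness condition on $T^\ast$, depending only on $n$, $q$, $a_+$, $\beta$, $\|w_0\|^{(2)}_{q,\omega^+(0)}$, $\inf w_0$ and the Lipschitz constant of $f_+$ on $[0,2\sup w_0]$, closes this loop independently of $k$.

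Next, from the uniform bounds in $\mathcal{Z}^+$ and $\mathcal{Y}^+$, extract subsequences with $U_k\rightharpoonup U$ in $\mathcal{Z}^+$ and $P_k\rightharpoonup P$ in $\mathcal{Y}^+$, as indicated in \eqref{eq:converge}. Since $q>n+2$, the embeddings $W^{2,1}_q(Q_T^+)\hookrightarrow C^{\alpha',\alpha'/2}$ and $W^{2}_q\hookrightarrow C^{1+\alpha'}$ are compact, hence $D^2 U_k\to D^2U$ and $\nabla P_k\to\nabla P$ strongly in $C^{\alpha',\alpha'/2}(\overline{Q_T^+})$. This strong convergence is what is needed to pass to the limit in the nonlinear dependences on the Hessian $H(U_k)$ inside $J_k$, $|J_k|^{-1}$, and in $f_+(w_0|J_k|^{-1})|J_k|$ (using the local Lipschitz continuity of $f_+$ together with the uniform pinching of $|J_k|$ away from $0$). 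Passing to the limit in the weak formulation of $\mathcal{L}(U_k,P_k)=0$ and in the elliptic identity for $P_{k+1}$ yields $\mathcal{L}(U,P)=0$ and $\mathcal{M}(P,U)=0$, and the interior boundary condition $|J|=\Psi(t)$ on $\Gamma_0$ persists because $H(U_k)\to H(U)$ uniformly up to the boundary. The homogeneous boundary conditions for $U$ and $P$ are stable under weak convergence.

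For uniqueness, suppose $(U^{(1)},P^{(1)})$ and $(U^{(2)},P^{(2)})$ are two solutions with $\|U^{(i)}\|_{\mathcal{Z}^+}\le 1$. Subtracting the two systems and applying the linear regularity estimates \eqref{eq:est-tilde} and \eqref{eq:est-ell-1} to the differences, the right-hand sides become quadratic-in-difference expressions of the form treated in \eqref{eq:Lip}, together with the $T^{\alpha/2}$-small Lipschitz bound underlying Lemma \ref{le:ellip-conv}. One obtains an inequality
\[
\|U^{(1)}-U^{(2)}\|_{\mathcal{Z}^+}+\|P^{(1)}-P^{(2)}\|_{\mathcal{Y}^+}\le C\, T^{\alpha/2}\bigl(\|U^{(1)}-U^{(2)}\|_{\mathcal{Z}^+}+\|P^{(1)}-P^{(2)}\|_{\mathcal{Y}^+}\bigr),
\]
and shrinking $T^\ast$ further so that $CT^{\alpha/2}<1$ gives uniqueness. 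The main technical point, and the step I expect to be the principal obstacle, is verifying that a \emph{single} choice of $T^\ast$ works simultaneously for the Newton step inside Theorem \ref{th:linear-aux-exist}, the closure of the outer Picard iteration, and the contraction argument for uniqueness — essentially checking that the $T$-dependence in Corollary \ref{cor:constants}, Lemma \ref{le:ellip-1}, and the quadratic Lipschitz estimate \eqref{eq:Lip} can be balanced consistently; all remaining manipulations follow the blueprint of \cite{Diaz-Shmarev-2009-1}.
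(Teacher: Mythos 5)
Your proof proposal follows essentially the same route as the paper: the same Picard-type iteration $U_0=0 \to P_0 \to U_1 \to P_1 \to \cdots$ alternating Theorem \ref{th:linear-aux-exist} (the Newton step with $P$ frozen) and Lemma \ref{le:ellip-1} (the elliptic solve with $U$ frozen), the same uniform bounds $\|P_k\|_{\mathcal{Y}^+}\le CRT^{1/q}$ and $\|U_k\|_{\mathcal{Z}^+}\le 2\Lambda$ for $T$ small, the same extraction of weakly convergent subsequences combined with compact embedding $W^{2,1}_q\hookrightarrow C^{\alpha',\alpha'/2}$ to get strong convergence of $H(U_k)$ and $\nabla P_k$, and the same passage to the limit in the weak formulation. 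The one genuine addition on your side is the explicit uniqueness argument: the paper states uniqueness in the theorem but its proof text (following \cite{Diaz-Shmarev-2009-1}) only records the convergence of the iteration to some $(U,P)$ and does not spell out why two solutions must coincide. Your contraction sketch — invert $\mathcal{H}'$ to bound the $U$-difference by the $P$-difference, then use the $T$-small elliptic estimate to bound the $P$-difference by the $U$-difference and close the loop — is the natural way to fill that in, though the precise placement of the $T^{\alpha/2}$ (or $T^{1/q}$) smallness factor matters: it sits on the $P$-difference estimate, not symmetrically on both, so the combined inequality should be derived in that order rather than written as a single symmetric bound as you did; the conclusion is unaffected.
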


\begin{remark}
\label{rem:normal-moving} The normal vector $\mathbf{n}_x$ is well-defined because
$\Gamma_0\in C^{2+\alpha}$ and $\mathbf{v}=J^{-1}\nabla\left( a_+\,w_0|J|^{-1} -P\right)$ is
continuous in $t$ due to \eqref{eq:Holder-prim} and Lemma \ref{le:ellip-conv}.
\end{remark}

By the method construction, the obtained solution satisfies all the conditions of Lemma
\ref{le:div-div} except bijectivity of the mappings $\partial\omega^+(0)\mapsto
X(\partial\omega^+(0),t)=\partial\omega^{+}(t)$, $\Gamma_0\mapsto X(\Gamma_0,t)=\Gamma_t$,
which has to be checked independently.

\begin{lemma}
\label{le:bijectivity-+} Under the conditions of Theorem \ref{th:nonlinear-+} the value of
$T^\ast$ can be chosen so small that for every points $y,\,z\in \omega^{+}(0), \,\Gamma_0$

\[
|X(y,t)-X(z,t)|\geq \mu\,|y-z|
\]
with an independent of $y,z$ constant $\mu\in (0,1)$.
\end{lemma}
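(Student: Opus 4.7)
The plan is to reduce the lower bound to a smallness estimate on the increment of $\nabla U$. Writing
\[
X(y,t)-X(z,t) = (y-z) + \bigl(\nabla U(y,t)-\nabla U(z,t)\bigr),
\]
I would show that, for $T^\ast$ sufficiently small, the second summand has norm at most $\tfrac{1}{2}|y-z|$, so that the reverse triangle inequality yields $|X(y,t)-X(z,t)|\geq \tfrac{1}{2}|y-z|$ and the conclusion holds with $\mu=1/2$.

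The first step is to record a uniform $C^0$ bound on $D^2U$. By Theorem \ref{th:linear-aux-exist} and \eqref{eq:ball}, the solution furnished by Theorem \ref{th:nonlinear-+} satisfies $\|U\|_{\mathcal{Z}^+}\leq 2\Lambda$, where $\Lambda$ is controlled by a quantity that tends to $0$ as $T\to 0$ (Corollary \ref{cor:constants}). Combined with the embedding estimate \eqref{eq:Holder-prim}, this gives
\[
\sup_{Q_{T}^{+}}|D^{2}U|\leq C\,T^{\alpha/2}
\]
with a constant $C$ depending only on the data. Hence the Hessian of $U$ can be made as small as desired by shrinking $T^\ast$.

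The second step is a quasi-convex path argument. Since $\partial\omega^{+}(0),\Gamma_0\in C^{2+\alpha}$, the bounded annular closure $\overline{\omega^{+}(0)}$ is quasi-convex: there exists a constant $K\geq 1$, depending solely on the geometry of the domain, such that every pair $y,z\in\overline{\omega^{+}(0)}$ is joined by a rectifiable curve $\gamma\subset \overline{\omega^{+}(0)}$ of length $\leq K|y-z|$ (and the same holds intrinsically within the smooth closed hypersurface $\Gamma_0$ for pairs lying on it). Integrating $D^{2}U$ along such a path,
\[
|\nabla U(y,t)-\nabla U(z,t)|\leq K|y-z|\sup_{Q_T^{+}}|D^2U|\leq C\,K\,T^{\alpha/2}\,|y-z|,
\]
and it suffices to choose $T^\ast\leq T$ with $C\,K\,(T^\ast)^{\alpha/2}\leq 1/2$.

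I do not expect any serious obstacle here. The only non-routine ingredient is the quasi-convexity of $\overline{\omega^{+}(0)}$, which is classical for bounded domains with $C^{2+\alpha}$ boundary. Everything else is an immediate consequence of the a priori estimates already established in Theorems \ref{th:linear-aux-exist} and \ref{th:nonlinear-+}: the $T^{\alpha/2}$ factor in \eqref{eq:Holder-prim} is precisely what makes the perturbation $y\mapsto y+\nabla U(y,t)$ of the identity a bi-Lipschitz map on $\overline{\omega^{+}(0)}$ for short times, with a bi-Lipschitz constant as close to $1$ as desired.
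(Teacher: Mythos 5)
Your argument is correct and essentially reproduces the paper's own proof: both decompose $X(y,t)-X(z,t)=(y-z)+(\nabla U(y,t)-\nabla U(z,t))$, connect $y$ and $z$ by a curve of length comparable to $|y-z|$ (what you call quasi-convexity is precisely the paper's Lipschitz curve with $\kappa_1|y-z|\le |l(y,z)|\le\kappa_2|y-z|$), integrate $D^2U$ along it, and invoke the $T^{\alpha/2}$ smallness of $\sup|D^2U|$ from \eqref{eq:Holder-prim} together with the a priori bound on $\|U\|_{\mathcal{Z}^+}$ to make the perturbation of the identity small.
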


\begin{proof}
Let us fix an arbitrary pair of points $y,z\in\Gamma_0$ and connect them by a
Lipschitz-continuous curve $l(y,z)\subset \omega^{+}(0)$. Since $\Gamma_0$ is smooth, we can
choose $l(y,z)$ in such a way that its length $|l(y,z)|$ satisfies the estimates
$\kappa_1|y-z|\leq |l(y,z)|\leq \kappa_2|y-z|$ with finite constants $\kappa_i$ depending
only on module of continuity of the parametrization of $\Gamma_0$.  By the definition

\[
X(y,t)-X(z,t)=(y-z)+\nabla (U(y,t)-U(z,t))=(y-z)+\int_{l(y,z)}\dfrac{d}{dl}(\nabla U)\,ds
\]
and by virtue of \eqref{eq:Holder-prim}

\[
|X(y,t)-X(z,t)|\geq |y-z|-\sum_{|\gamma|=2}\sup_{Q_{T}^{+}}|D^{\gamma}U||l(y,z)|\geq
|y-z|\left(1-C\,\kappa_2\,T^{\alpha/2}\right).
\]
\end{proof}


\subsection{Problem in the cylinder $Q^{-}_{T}$ and a local solution of the free-boundary problem}

To construct a solution of problem \eqref{eq:split-p} we follow the same scheme that was
used to find a solution of problem \eqref{eq:split-U}. The only difference is that now the
solution $P^-$ of the linear elliptic problem has to satisfy the Neumann boundary condition
on $\Gamma_0$. Let us define the function spaces $\mathcal{Z}^-$, $\mathcal{Y}^-$ ,
$\mathcal{X}^-$, where the upper index means that we consider the functions defined on
$Q_{T}^{-}=\omega^-(0)\times [0,T]$. Problem \eqref{eq:split-p} is split into the problems
for defining $U^-$ and $P^-$. The first step is to find a solution $U^{-}$ of the problem

\begin{equation}
\label{eq:split-p-}
\begin{cases}
& \text{$\mathcal{L}(U^-,P^-)=0$ in $Q_{T}^-$},
\\
& \text{$U^-=0$ on the parabolic boundary of $Q_{T}^{-}$},
\\
& \text{$|J^-|=\Psi(t)$ on $\Gamma_0\times (0,T]$}
\end{cases}
\end{equation}
with a given $P^-\in W_{q}^{2}(Q_{T}^-)$. The boundary condition for $|J^{-}|$ is
substituted \eqref{eq:J^-} in case of Theorem \ref{th:lagrange-1}. Repeating the proof of
Theorem \ref{th:linear-aux-exist} we arrive at the following assertion.

\begin{lemma}
\label{le:linear-prim} Let $P^-\in W^{2}_{q}(Q_{T}^{-})$ with $q>n+2$ and $\Psi(t)\in
C^{1}[0,1]$. Then there exists $T_{\ast}\in(0,1)$ so small that problem \eqref{eq:split-p-}
has a unique solution $U^-\in \mathcal{Z}^-$ such that $\|U^-\|_{\mathcal{Z^-}}\leq r'<1$
and $r'\to 0$ as $T_{\ast}\to 0$.
\end{lemma}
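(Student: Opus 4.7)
The plan is to mirror the argument that established Theorem~\ref{th:linear-aux-exist} in the $+$ cylinder, applying the modified Newton's method (Theorem~\ref{th:Newton}) to a suitably defined nonlinear operator on $\mathcal{Z}^-$. Define the Banach spaces $\mathcal{Z}^-$, $\mathcal{Y}^-$, $\mathcal{X}^-$ exactly as the corresponding $+$ spaces, but over the annular cylinder $Q_T^-=\omega^-(0)\times[0,T]$, and set
\[
\mathcal{H}^-(U)=\bigl(\mathcal{L}(U,P^-),\,|I+H(U)|-\Psi(t)\bigr),
\]
with the second component interpreted as a trace on $\Gamma_0\times[0,T]$. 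A zero of $\mathcal{H}^-$ in $\mathcal{Z}^-$ is precisely a solution of \eqref{eq:split-p-}.

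The next step is to verify the three hypotheses of Theorem~\ref{th:Newton}. Computing the Fr\'echet derivative at $U=0$ as in Section~\ref{sec:Lagr-+} (with $a_+$ replaced by $a_-$), the linearized problem reads: given $g\in\mathcal{Y}^-$ and $\phi\in\mathcal{X}^-$, find $U\in\mathcal{Z}^-$ satisfying $\Delta(U_t-a_-\,w_0\,\Delta U)=\Delta g$ in $Q_T^-$ together with $\Delta U=\phi$ on $\Gamma_0\times[0,T]$. I would solve this exactly as in Lemma~\ref{th:par-linear}: let $U$ solve $U_t-a_-w_0\Delta U=g+G$ with $U=0$ on the parabolic boundary of $Q_T^-$, where the harmonic corrector $G(\cdot,t)$ solves $\Delta G=0$ in $\omega^-(0)$, $G+g=0$ on $\partial\omega^-(0)$, and $G+g=a_-w_0\phi$ on $\Gamma_0$. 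Since $\omega^-(0)$ is a smooth annular domain with two $C^{2+\alpha}$ boundary components, $G$ is well defined and the parabolic $W^{2,1}_q$ estimates combined with elliptic regularity of $G$ yield the analog of \eqref{eq:est-tilde}, so $M\leq C$. Lipschitz continuity of $\mathcal{H}^{-\prime}$ on a bounded ball in $\mathcal{Z}^-$ follows verbatim from the computation giving \eqref{eq:Lip}, using the H\"older bounds \eqref{eq:Hold}--\eqref{eq:Holder-prim} on $Q_T^-$.

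The analog of Corollary~\ref{cor:constants} then reads
\[
\Lambda\leq C\bigl(a_-\,T^{1/q}\|\Delta w_0\|_{q,\omega^-(0)}+\|P^-\|_{\mathcal{Y}^-}\bigr)+T^{1/q}|\omega^-(0)|\bigl(\max_{[0,T]}|1-\Psi(t)|+\max_{[0,T]}|\Psi'(t)|\bigr),
\]
since $\mathcal{H}_1^-(0)=a_-\Delta w_0-\Delta P^-$ and $\mathcal{H}_2^-(0)=1-\Psi(t)$. Each term on the right-hand side tends to $0$ as $T\to 0$: the factors $T^{1/q}$ handle the data coming from $w_0$ and $\Psi$, while $\|P^-\|_{\mathcal{Y}^-}=\|P^-\|_{q,Q_T^-}^{(2)}\to 0$ by absolute continuity of the Lebesgue integral, as $P^-\in W^2_q(Q_T^-)$. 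Hence $\Lambda\to 0$ as $T_\ast\to 0$, and for $T_\ast$ small enough the contraction condition $\lambda=M\Lambda L<1/4$ holds. Theorem~\ref{th:Newton} then delivers a unique $U^-\in B_{2\Lambda}(0)\subset\mathcal{Z}^-$, giving $\|U^-\|_{\mathcal{Z}^-}\leq r':=2\Lambda<1$ with $r'\to 0$ as $T_\ast\to 0$.

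The main obstacle is essentially bookkeeping rather than substance: one must ensure that the boundary data of the harmonic corrector $G$ are consistently prescribed on the two components $\partial\omega^-(0)$ and $\Gamma_0$ of $\partial\omega^-(0)\cup\Gamma_0$, and that the Jacobian boundary condition is compatible with $U^-(y,0)=0$, which it is because $\Psi(0)=\Phi(0)/\Phi(0)=1$. With these points in place, every remaining estimate is a direct transcription of the corresponding step in the $+$ case.
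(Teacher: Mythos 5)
Your proposal is correct and follows precisely the route the paper indicates: the paper's entire proof is the remark that one repeats the argument of Theorem~\ref{th:linear-aux-exist} (i.e., Lemma~\ref{th:par-linear}, Corollary~\ref{cor:constants}, estimate \eqref{eq:Lip}, and the modified Newton scheme) on $Q_T^-$ with $a_+$ replaced by $a_-$, and you have spelled out exactly that transcription, including the correct observation that the boundary datum is compatible since $\Psi(0)=1$. The one point you add that the paper leaves implicit — that $\|P^-\|_{W^2_q(Q_{T_*}^-)}\to 0$ as $T_*\to 0$ by absolute continuity of the integral, so that the bound on $\Lambda$ from Corollary~\ref{cor:constants} indeed gives $r'=2\Lambda\to 0$ — is the right justification for the final clause of the lemma.
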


The second step is to solve the problem

\begin{equation}
\label{eq:P^-}
\begin{cases}
& \text{$\mathcal{M}(P^-,U^-)=0$ in $\omega^{-}(0)$},
\\
& \text{$P^-=0$ on $\partial\omega^{-}(0)$}, \quad \text{$(J^-)^{-1}\cdot \nabla_yP^-\cdot
\widetilde{\mathbf{n}^{+}_x}=S$ on $\Gamma_0$}
\end{cases}
\end{equation}
with given $U^{\pm}\in \mathcal{Z}^{\pm}$, $P^+\in W_{q}^{2}(Q_{T}^{+})$ and

\[
S=(J^+)^{-1}\cdot \nabla_yP^-\cdot \widetilde{\mathbf{n}^{+}_x}+ [a\,J^{-1}\cdot \nabla_y
(w_0|J^{-1}|)\cdot \widetilde{\mathbf{n}^{+}_x}]_{\Gamma_0}.
\]

\begin{lemma}
\label{le:elliptic-1} Let $U^{\pm}\in\mathcal{Z}^{\pm}$, $P^{+}\in W_{q}^{2}(Q_{T}^{+})$. If
$f_-$ is locally Lipschitz-continuous, then for a.e. $t\in(0,T)$ problem $(\ref{eq:P^-})$
has a solution $P(\cdot,t)\in W^{2}_{q}(\omega^{-}_{0})$ which satisfies the estimates

\begin{equation}
\label{eq:sec}\|P\|_{\mathcal{Y}^{-}}\leq
C\,\left(\|U\|_{\mathcal{Z}^{+}}+\|U\|_{\mathcal{Z}^{-}}+\|P^+\|_{\mathcal{Y}^+}
+T^{1/q}\max_{[0,T]}|\Psi'(t)|\right)
\end{equation}
with an absolute constant $C$.
\end{lemma}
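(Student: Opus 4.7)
For every fixed $t\in(0,T)$ problem \eqref{eq:P^-} is a linear mixed boundary value problem posed in the time-independent annular domain $\omega^-(0)$: Dirichlet data on $\partial\omega^-(0)$ and a Neumann (oblique) condition on $\Gamma_0$. The strategy is to apply classical elliptic theory pointwise in $t$ in the spirit of Lemma~\ref{le:ellip-1}, and then integrate the resulting estimate in $t$. I would proceed in four steps.

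\textbf{Step 1: uniform ellipticity.} Since $U^-\in\mathcal{Z}^-$ and $T$ is small, estimate \eqref{eq:Jacobian} applied to $U^-$ gives that $J^-=I+H(U^-)$ is invertible with $\frac12\le |J^-|\le\frac32$, so the matrix $A(y,t):=w_0(y)(J^-(y,t))^{-2}$ satisfies $\lambda I\le A\le\Lambda I$ uniformly in $(y,t)$ with $\lambda,\Lambda>0$ depending only on $\sup w_0,\inf w_0$. Moreover \eqref{eq:Hold} yields $A(\cdot,t)\in C^\alpha(\overline{\omega^-(0)})$ uniformly in $t$. The portions $\partial\omega^-(0)$ and $\Gamma_0$ of the boundary belong to $C^{2+\alpha}$ and are disjoint, so the mixed problem is well-posed.

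\textbf{Step 2: pointwise-in-$t$ existence and $W^2_q$ regularity.} Classical elliptic theory for mixed Dirichlet--Neumann problems with $C^{2+\alpha}$ boundary pieces that do not meet (see, e.g., \cite[Ch.~3]{LU} or \cite{Grisvard}) gives, for a.e.\ $t\in(0,T)$, a unique $P(\cdot,t)\in W^2_q(\omega^-(0))$ solving \eqref{eq:P^-} together with the a priori estimate
\[
\|P(\cdot,t)\|^{(2)}_{q,\omega^-(0)}\le C\bigl(\|f_-(w_0|(J^-)^{-1}|)|J^-|\|_{q,\omega^-(0)}+\|S(\cdot,t)\|_{W^{1-1/q}_{q}(\Gamma_0)}\bigr),
\]
with $C$ depending on $n,q,\sup w_0,\inf w_0,\|\nabla w_0\|_{q,\omega^-(0)}$ but independent of $t$.

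\textbf{Step 3: controlling the data.} The bulk term is harmless: local Lipschitz continuity of $f_-$ together with the uniform bounds on $|J^-|,|(J^-)^{-1}|$ from \eqref{eq:Jacobian} yields $\|f_-(w_0|(J^-)^{-1}|)|J^-|\|_{q,\omega^-(0)}\le R$ with $R=R(\sup w_0,\mathrm{Lip}\,f_-)$, so it can be absorbed into the constant. For the Neumann datum $S$ I would apply the trace theorem together with the Hölder bound \eqref{eq:Hold}: since $P^+\in W^2_q(Q_T^+)$, the trace of $(J^+)^{-1}\nabla_yP^+\cdot\widetilde{\mathbf n^+_x}$ on $\Gamma_0$ belongs to $W^{1-1/q}_q(\Gamma_0)$ with norm bounded by $C\|P^+\|_{\mathcal{Y}^+}$, and the jump $[a\,J^{-1}\nabla_y(w_0|J^{-1}|)\cdot\widetilde{\mathbf n^+_x}]_{\Gamma_0}$ is controlled through the Hölder norms of $D^2U^\pm$ by $C(\|U^+\|_{\mathcal{Z}^+}+\|U^-\|_{\mathcal{Z}^-})$ (plus terms from $w_0$ absorbed into $C$). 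Finally, $\widetilde{\mathbf n^+_x}$ depends on time through $\nabla U^+|_{\Gamma_0}$, and the compatibility of this Neumann condition with the constraint $|J^-|=\Psi(t)$ on $\Gamma_0$ (inherited from \eqref{eq:split-p-}) contributes the term $\max_{[0,T]}|\Psi'(t)|$, coming from the $C^1[0,T]$ bound for $\Psi$.

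\textbf{Step 4: integration in $t$.} Taking the $q$-th power of the pointwise estimate, integrating over $(0,T)$, and using $\int_0^T 1\,dt=T$ with H\"older's inequality in time on the right-hand side produces the $T^{1/q}$ prefactor in \eqref{eq:sec}.

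\textbf{Main obstacle.} The only delicate point is Step~3, namely the trace-regularity of the Neumann datum $S$ on $\Gamma_0$. One must verify that the traces of $\nabla_y P^+$ and of $\nabla_y(w_0|J^{-1}|)$ belong to $W^{1-1/q}_q(\Gamma_0)$ with the correct dependence on $\|P^+\|_{\mathcal{Y}^+}$ and $\|U^\pm\|_{\mathcal{Z}^\pm}$; this hinges on the choice $q>n+2$, which (through \eqref{eq:Hold}) guarantees that $D^2U^\pm$ are Hölder-continuous in space-time and that the moving normal vector $\widetilde{\mathbf n^+_x}$ is well defined and regular (cf.\ Remark~\ref{rem:normal-moving}). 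Once these trace estimates are in place, the rest is a routine pointwise application of the standard $W^2_q$ theory followed by integration in $t$.
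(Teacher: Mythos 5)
Your argument fleshes out exactly what the paper's one-line proof delegates to the citations: pointwise-in-$t$ $W^{2}_{q}$ theory for a linear elliptic mixed Dirichlet--Neumann problem on the fixed annulus $\omega^{-}(0)$, with uniform ellipticity of $w_{0}(J^{-})^{-2}$ coming from \eqref{eq:Jacobian}, H\"older regularity of coefficients and of $\widetilde{\mathbf n^{+}_{x}}$ from \eqref{eq:Hold}, trace control of the Neumann datum $S$, and finally integration in $t$ as in Lemma~\ref{le:ellip-1}. The one soft spot is the last sentence of Step~3, where the appearance of $T^{1/q}\max_{[0,T]}|\Psi'(t)|$ in \eqref{eq:sec} is asserted rather than traced through the data of the pointwise elliptic problem; the paper's own proof offers no more on this term than you do, so this does not distinguish your route from theirs, but if you want a self-contained argument you should show explicitly how the constraint $|J^{\pm}|=\Psi(t)$ on $\Gamma_{0}$ injects $\Psi'$ into $S$.
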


\begin{proof}
The existence of a solution of problem \eqref{eq:P^-} satisfying \eqref{eq:sec} follows from
the classical elliptic theory - see, e.g., \cite[Ch.~3, Sec.~5-6, 15]{LU}) or
\cite{Grisvard}.
\end{proof}
Recall that in the case of Theorem \ref{th:lagrange-1} the corresponding estimate
\eqref{eq:sec} is independent of $\Phi(t)$.

The next step consists in checking the convergence of the iteratively defined sequences
$\{U_{k}^-\}$, $\{P_{k}^-\}$: $U_{0}^{-}=0$, $P^{-}_0$ is the solution of problem
\eqref{eq:P^-} with $U^-=0$, for every $k\geq 1$ $U^{-}_k$ is the solution of
\eqref{eq:split-p-} with $P=P^{-}_k$, $P^{-}_{k+1}$ is the solution of problem
\eqref{eq:P^-} with $U^{-}=U^{-}_k$. This is done exactly as in the proof of Theorem
\ref{th:nonlinear-+}.

\begin{lemma}
\label{le:nonlinear-} Let $w_0\in W^{2}_{q}(\omega^\pm(0))$ be strictly positive in
$\overline{\omega(0)}$, $f$ be Lipschitz-continuous on the interval $|s|\leq 2 \sup w_0$,
and let $\partial\omega^\pm (0),\Gamma_0\in C^{2+\beta}$ with some $\beta\in (0,1)$. There
exists $T^\ast$, depending on $\|w_0\|^{(2)}_{q,\omega(0)}$, $n$, $q$, $a_\pm$, $\beta$ and
the Lipschitz constant of $f$ such that problems \eqref{eq:split-U}, \eqref{eq:split-p} have
unique solutions $(U^{\pm},P^{\pm})\in \mathcal{Z}^{\pm}\times \mathcal{Y}^{\pm}$.
\end{lemma}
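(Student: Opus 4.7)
The plan is to construct the pair $(U^\pm,P^\pm)$ sequentially, first in $Q_T^+$ and then in $Q_T^-$, by reducing each nonlinear system to the iteration scheme already developed in the proof of Theorem \ref{th:nonlinear-+}. For the $+$ part, the assertion is not new: under the stated regularity on $w_0$, $\partial\omega^+(0)$, $\Gamma_0$ and $f$, Theorem \ref{th:nonlinear-+} gives a time $T_1^\ast>0$ and a unique solution $(U^+,P^+)\in\mathcal{Z}^+\times\mathcal{Y}^+$ of \eqref{eq:split-U} satisfying $\|U^+\|_{\mathcal{Z}^+}\leq 1$. We then freeze $(U^+,P^+)$ and pass to the construction in $Q_T^-$.

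In $Q_T^-$, I would define the sequence $\{(U_k^-,P_k^-)\}_{k\geq 0}$ as follows: set $U_0^-=0$ and let $P_0^-$ be the solution of \eqref{eq:P^-} with $U^-=0$ (the function $U^+$ and $P^+$ appearing in the source $S$ and in the Neumann datum are kept fixed throughout the iteration); for $k\geq 1$, let $U_k^-\in\mathcal{Z}^-$ be the solution of \eqref{eq:split-p-} with $P^-=P_{k-1}^-$ furnished by Lemma \ref{le:linear-prim}, and $P_k^-\in\mathcal{Y}^-$ the solution of \eqref{eq:P^-} with $U^-=U_k^-$ furnished by Lemma \ref{le:elliptic-1}. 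Lemma \ref{le:linear-prim} gives $\|U_k^-\|_{\mathcal{Z}^-}\leq r'<1$ with $r'\to 0$ as $T_*\to 0$, while the estimate \eqref{eq:sec} gives
\[
\|P_k^-\|_{\mathcal{Y}^-}\leq C\bigl(\|U^+\|_{\mathcal{Z}^+}+\|U_k^-\|_{\mathcal{Z}^-}+\|P^+\|_{\mathcal{Y}^+}+T^{1/q}\max_{[0,T]}|\Psi'(t)|\bigr),
\]
so both sequences are uniformly bounded in their respective spaces provided $T^\ast$ is chosen sufficiently small, depending on the data listed in the statement.

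Having uniform bounds, I would extract weakly convergent subsequences $U_k^-\rightharpoonup U^-$ in $\mathcal{Z}^-$ and $P_k^-\rightharpoonup P^-$ in $\mathcal{Y}^-$. Because $q>n+2$, the embeddings of $W^{4}_q$ and $W^{2,1}_q$ into H\"older spaces analogous to \eqref{eq:Hold}--\eqref{eq:Holder-prim} yield strong convergence of the second derivatives $D^2_{ij}U_k^-$ and of $\nabla P_k^-$ in suitable $C^{\alpha',\alpha'/2}$ spaces, exactly as in \eqref{eq:converge}. This strong convergence is enough to pass to the limit in the nonlinear terms involving $|J^-|$, $(J^-)^{-1}$ and $f_-(w_0|J^-|^{-1})$, so that $(U^-,P^-)$ solves \eqref{eq:split-p}. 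Together with the $+$ part, this gives the existence of $(U^\pm,P^\pm)\in\mathcal{Z}^\pm\times\mathcal{Y}^\pm$ after replacing $T_1^\ast$ by a possibly smaller $T^\ast$ to accommodate the requirements of Lemmas \ref{le:linear-prim} and \ref{le:elliptic-1} in $Q_T^-$.

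Uniqueness follows by the same Newton contraction argument that underlies Theorem \ref{th:linear-aux-exist}: the nonlinear operators $\mathcal{L},\mathcal{M}$ have Lipschitz Fr\'echet derivatives on bounded balls (cf.\ estimate \eqref{eq:Lip}), and by \eqref{eq:Holder-prim} and Lemma \ref{le:ellip-conv} the Lipschitz constants decay like a positive power of $T^\ast$, so for $T^\ast$ small the map $(U^-,P^-)\mapsto (\widetilde U^-,\widetilde P^-)$ obtained by one iteration is a strict contraction on the ball $\|U^-\|_{\mathcal{Z}^-}+\|P^-\|_{\mathcal{Y}^-}\leq \rho$. The main technical obstacle I anticipate is the rigorous control of the coupling on $\Gamma_0$: the Neumann datum in \eqref{eq:P^-} contains the jump $[aJ^{-1}\cdot\nabla_y(w_0|J^{-1}|)\cdot\widetilde{\mathbf n^+_x}]_{\Gamma_0}$, which mixes the fixed $+$ quantities with the current iterate in $Q_T^-$; one must verify that this trace is well-defined as an element of the appropriate fractional Sobolev space on $\Gamma_0$ and that its dependence on $U_k^-$ is Lipschitz with a small constant. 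This is where the regularity $U^\pm\in W^{4}_q$ and the Hölder bounds \eqref{eq:Hold} on $D^2 U^\pm$, together with $\Gamma_0\in C^{2+\beta}$ and $w_0\in W^2_q$, are used in an essential way, just as in \cite{Diaz-Shmarev-2009-1,Diaz-Shmarev-2009-2}.
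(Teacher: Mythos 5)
Your proposal reproduces, essentially verbatim, the paper's own scheme: solve \eqref{eq:split-U} via Theorem \ref{th:nonlinear-+}, then iterate Lemma \ref{le:linear-prim} (parabolic step in $Q_T^-$) and Lemma \ref{le:elliptic-1} (elliptic step with Neumann datum on $\Gamma_0$), pass to the limit using the uniform $\mathcal{Z}^-\times\mathcal{Y}^-$ bounds and the H\"older embeddings analogous to \eqref{eq:Hold}--\eqref{eq:Holder-prim}, and obtain uniqueness from the Newton contraction of Theorem \ref{th:Newton}. The paper simply states that the convergence is established ``exactly as in the proof of Theorem \ref{th:nonlinear-+}''; your remarks on the trace regularity of the coupling term on $\Gamma_0$ are a sensible elaboration of that terse reference, not a different argument.
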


Finally, we repeat the proof of Lemma \ref{le:bijectivity-+} to ensure the bijectivity of
the mapping $y\mapsto X(t,t):=y+\nabla U$ for $y\in \overline{\omega}^{-}(0)$. The assertion
of Theorem \ref{th:lagr-euler} follows now if we define

\[
U=\begin{cases} U^+ & \text{in $Q_{T}^{+}$},
\\
U^- & \text{in $Q_{T}^{-}$},
\end{cases}
\qquad P=\begin{cases} P^+ & \text{in $Q_{T}^{+}$},
\\
P^- & \text{in $Q_{T}^{-}$}.
\end{cases}
\]

\section{Proofs of the main results}
\label{sec:proofs}

\subsection{Continuation to the rest of the cylinder. Proof of Theorem \ref{th:lagrange}}

Let us denote by $\Sigma^{\pm}$ the images of the surfaces $\partial^\pm\omega(0)$ under the
mapping $y\mapsto X(y,t)$. According to Theorem \ref{th:lagr-euler} the pair
$(w,\mathcal{C})$ defined by formulas (\ref{eq:param-repr}) is a solution of problem
(\ref{eq:aux-Lagrange}) in the sense of Definition \ref{def:div-div}. Let us take a smooth
simply connected surface $\gamma\subset \omega^{+}(0)$ such that $\gamma\cap
\Gamma_0=\emptyset$ and $\gamma\cap
\partial\omega^+(0)=\emptyset$. By continuity of the mapping $y\mapsto y+\nabla U$, there is
$T^+$ such that $\Sigma^{+}$ and $\Gamma_t$ do not touch the vertical surface
$S=\gamma\times [0,T^+]$, so that $S\subset \mathcal{C}^+$. Since $w_0>0$ in $\omega^+(0)$,
the function $w$ constructed in Theorem \ref{th:lagr-euler} is strictly positive in
$\mathcal{C}^+$ and is a weak solution of the uniformly parabolic equation. The local
regularity results for the solutions of uniformly parabolic quasilinear equations
\cite[Ch.~6, Sec.~4]{LSU} imply that $w\in C^{2+\beta,(2+\beta)/2}_{x,t}$ in a vicinity of
$S$. Let us set $\psi=w|_{S}\in C^{2+\beta,(2+\beta)/2}(S)$, denote by $\mathcal{A}$ the
annular cylinder with the lateral boundaries $\partial \Omega\times[0,T^+]$ and $S$, and
consider the following problem:

\begin{equation}
\label{eq:A} \left\{
\begin{array}
[c]{ll}
& u_{t}=\operatorname{div}(a_{+} u\,\nabla u)+f_+(u)\quad\mbox{in $\mathcal{A}$},\\
& \mbox{$u=\psi$ on $S$},\quad\mbox{$u=h$ on $\partial\Omega
\times\lbrack0,T^+]$},\\
& \mbox{$u({x},0)=w_{0}$ in $\overline{\mathcal{A}}\cap\{t=0\}$}.
\end{array}
\right.
\end{equation}
This problem has a unique solution $u\in C^{2+\beta,(2+\beta)/2}%
(\overline{\mathcal{A}})$, that is,

\[
\left.  D_{x}^{\kappa}D_{t}^{s}(u-w)\right\vert _{\Sigma}=0\quad \mbox{for
$0\leq|\kappa|+2s\leq2$}.
\]
The required continuation to the exterior of $\mathcal{C}^+$ is now given by the formula

\[
W= \left\{
\begin{array}
[c]{ll}
w({x},t) & \mbox{in $\mathcal{C}^+\setminus\mathcal{A}$},\\
u({x},t) & \mbox{in $\mathcal{A}$},
\end{array}
\right.
\]
The continuation from $\mathcal{C}^{-}$ is constructed likewise.

\subsection{Proof of Corollary \ref{cor:indep}} The proof if a byproduct of the proof of
Theorem \ref{th:lagrange}. Items (1)-(2) follows directly from Theorem \ref{th:lagrange}. By
Lemma \ref{le:div-div} $u$ is obtained as the solution of problem \eqref{eq:lagr} in the
moving domain $\mathcal{C}^{+}$ and then continued across the exterior boundary of
$\mathcal{C}^{+}$ up to the lateral boundary of $D$ by the solution of problem \eqref{eq:A}.
Recall that by construction $u$ satisfies the equation
\[
 u_t+\operatorname{div}(u\mathbf{v})=0 \quad \text{for a.e.}\,(x,t) \in \mathcal{C}^+
\]
with $\mathbf{v}=-a_+ \nabla u+\nabla p$ (see \eqref{eq:Darcy-1}). Let $S$ and $\mathcal{A}$
be the sets chosen in the proof Theorem \ref{th:lagrange},
$\mathcal{A}_0=\overline{\mathcal{A}}\cap \{t=0\}$. By Lemma \ref{le:div-div}, for every
$\phi\in C^{1}(\overline{Q}\setminus \mathcal{C}^-)$, $\phi=0$ on $\partial\Omega\times
[0,T]$, $u$ satisfies \eqref{eq:1-1}:

\[
\begin{split}
-\int_{S}\phi a_+u\nabla u\cdot \mathbf{n}^+\,dS & +\int_{\mathcal{C^{+}}\setminus
\mathcal{A}} u\,\left(\phi_t+\nabla_{x}\phi\cdot
\mathbf{v}(x,t)\right)\,dxdt
\\
& +\int_{\omega(0)\setminus\mathcal{A}_0}u_0(x)\,\phi(x,0)\,dx =0.
\end{split}
\]
Continuing $u$ to $\mathcal{A}$ by the classical solution $\widetilde u$ of problem
\eqref{eq:A} we have

\[
\begin{split}
\int_{S}\phi a_+u\nabla u\cdot \mathbf{n}^{+}\,dS & +\int_{\mathcal{A}}
\left(\widetilde{u}\phi_t+a_+\widetilde{u} \nabla_{x}\phi\cdot \nabla \widetilde u
-f_+(\widetilde{u})\phi\right)\,dxdt
\\
& +\int_{\mathcal{A}_0}u_0\,\phi(x,0)\,dx =0.
\end{split}
\]
Gathering these equalities and taking into account the definition of $p$, we obtain
\eqref{eq:ind-1}. Relation \eqref{eq:ind-2} follows by the same arguments.

\subsection{Proof of Theorem \ref{th:Darcy}} The assertion is an immediate byproduct of the method of
construction of the solution to problem \eqref{eq:dirichlet}.

\subsection{Proof of Theorem \ref{th:lagrange-1}} The assertion of Theorem
\ref{th:lagrange-1} will follow if we prove that the velocity given by formula
\eqref{eq:Darcy} is continuous on $\Gamma_t$. The normal component of velocity is continuous
on $\Gamma_t$ by the definition. Let us fix an arbitrary point $y_{0}\in\Gamma_0$ and denote
by $x^{+}_0=X^+(y_0,t)$ its image under the mapping $X^+=y+\nabla U^+$ in $Q_{T}^{+}$. By
the definition, for every $t>0$

\[
\mathbf{v}^+(x^{+}_{0},t)=X^{+}_{t}(y_0,t)=\nabla_y U_t^{+}(y_0,t).
\]
Let $\tau(y_0)$ be an arbitrary unit vector in the tangent plane to $\Gamma_0$ at the point
$y_0$. Since $U^{+}_t=0$ on $\Gamma_0\times [0,T]$, we have

\[
\text{$\nabla_yU^{+}_{t}(y_0,t)\cdot \tau(y_0)=0\quad $ and $\quad
\mathbf{v}^+(x_{0}^{+},t)\cdot \tau(y_0)=0$ for all $t>0$},
\]
which means that for all $t>0$ the direction on the velocity $\mathbf{v}^{+}(X^+(y_0),t)$
coincides with $\mathbf{n}_{x}(y_0)$. Repeating this argument we find that the direction of
$\mathbf{v}^{-}(X^-(y_0),t)$ is also given by $\mathbf{n}_{x}(y_0)$ for all $t>0$. Thus, the
images $X^{\pm}(y_0,t)$ of the point $y_0\in \Gamma_0$ move along the same line with the
direction vector $\mathbf{n}_{x}(y_0)$. Since $[\mathbf{v}(x_0)]\cdot \mathbf{n}_{x}(x_0)=0$
by construction, it is necessary that the tangent component of $\mathbf{v}$ is also
continuous at $x_0$: every tangent vector $\tau(y_0)$ can be represented in the form
$\tau(y_0)=\alpha \tau(x_0)+\beta\,\mathbf{n}_{x}(x_0)$ with $\alpha\not=0$ (for small $t$),
whence

\[
0=[\mathbf{v}(x_0)]\cdot \tau(y_0)=\alpha [\mathbf{v}(x_0)]\cdot
\tau(x_0)+\beta\,[\mathbf{v}(x_0)]\cdot\mathbf{n}_{x}(x_0)=\alpha [\mathbf{v}(x_0)]\cdot
\tau(x_0).
\]

\section{Special cases}
\label{sec:special} In this section, we review special cases of system \eqref{eq:system}
available in the literature. The first example concerns the possibility to construct a
solution assuming that neither the contact inhibition assumption \eqref{ci.id} on the
initial data is fulfilled, nor that the matrix $A$ in \eqref{eq:cross-diffusion} is positive
definite. The second example is an explicit solution that corresponds to specific initial
data generated by the self-similar Barenblatt solution of the porous medium equation.
Finally we provide examples of numerical simulations.

\subsection{The singular case without the contact-inhibition assumption}

Given a fixed $T>0$ and a bounded set $\Omega\subset\mathbb{R}^n$, with $\partial \Omega \in
C^{0,1}$, find $u_i:\Omega\times (0,T]=Q_T\to\mathbb{R}$, $i=1,2$, such that

\begin{equation}
\label{eq:pde}
\begin{cases}
& \partial_t u_i-\operatorname{div} J_i(u_1,u_2)=u_i F_i(u_1,u_2) \quad \text{in $Q_T$},
\\
& J_i(u_1,u_2)\cdot n =0 \quad \text{on $\Gamma_T= \partial\Omega\times (0,T]$},
\\
& u_{i}(\cdot,0)=u_{i0} \quad \text{in $\Omega$},
\end{cases}
\end{equation}
with the flows given by
\begin{align}
 J_i(u_1,u_2)=a u_i \nabla (u_1+u_2),   \label{def:flow2}
\end{align}
and the Lotka-Volterra terms of the special type

\begin{equation}
\label{LVbertsch} F_1(u_1,u_2)=1-u_1-\alpha u_2,\quad F_2(u_1,u_2)=\gamma(1-\beta u_1-u_2/k)
\end{equation}
with positive constants $\alpha$, $\beta$, $\gamma$ and $k$.
\begin{theorem}[\cite{bertsch12}]
\label{th:bertsch} For $i=1,2$, let $u_{i0}\in C^3(\bar \Omega)$ such that $u_{i0}\geq 0$
and $B_0\leq u_{10}+u_{20} \leq B_0^{-1}$, for some constant $B_0$. Then there exist a
solution $u_i \in C^{2,1}([0,\infty)\times\Omega)$ of \eqref{eq:pde} with $J_i$ given by
\eqref{def:flow2} and $F_i$ by \eqref{LVbertsch}.
\end{theorem}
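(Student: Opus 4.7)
\subsection*{Proof proposal for Theorem \ref{th:bertsch}}

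The plan is to exploit the same reduction that was sketched in the Introduction: introduce the total density $w=u_1+u_2$ and the proportion $r=u_2/w$, so that the singular cross-diffusion system decouples into a quasilinear parabolic equation for $w$ and a first-order transport equation for $r$ driven by $\nabla w$. Since the flows in \eqref{def:flow2} share the same coefficient $a$, adding the two equations in \eqref{eq:pde} gives
\begin{equation*}
\partial_t w=a\operatorname{div}(w\nabla w)+u_1F_1+u_2F_2,\qquad \nabla w\cdot n=0 \text{ on } \Gamma_T,
\end{equation*}
and a short computation using $u_2=rw$, $u_1=(1-r)w$ shows that $r$ formally satisfies the transport equation
\begin{equation*}
\partial_t r-a\nabla w\cdot\nabla r=r(1-r)(F_2-F_1),\qquad r(\cdot,0)=u_{20}/(u_{10}+u_{20}).
\end{equation*}

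First I would establish two-sided bounds on $w$ independent of time. The maximum principle applied to the equation for $w$, combined with the sign structure of the logistic-type right-hand side $wG(r,w):=(1-r)wF_1+rwF_2$, yields uniform $L^\infty$ bounds of the form $b_0\le w\le b_0^{-1}$ on $[0,\infty)\times\Omega$, as soon as $B_0\le w_0\le B_0^{-1}$; this is where the specific form \eqref{LVbertsch} of $F_i$ enters, through the fact that $G(r,w)\le 0$ whenever $w$ exceeds a threshold and $G(r,w)\ge 0$ whenever $w$ is small enough. With $w$ bounded away from $0$, the equation for $w$ becomes \emph{uniformly} parabolic, and classical Schauder and $L^p$ theory for quasilinear parabolic equations (together with the regularity of $u_{i0}$ and the compatibility of the Neumann data) provides Hölder estimates for $\nabla w$ and then $C^{2+\beta,1+\beta/2}$ estimates on every bounded subcylinder.

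The construction itself would proceed by a Picard-type iteration. Starting from $r^{(0)}=u_{20}/w_0$, I would at each step solve the uniformly parabolic equation
\begin{equation*}
\partial_t w^{(n+1)}=a\operatorname{div}(w^{(n+1)}\nabla w^{(n+1)})+w^{(n+1)}G(r^{(n)},w^{(n+1)})
\end{equation*}
with the original Neumann and initial data, and then solve the linear transport equation for $r^{(n+1)}$ along the characteristics of the smooth field $-a\nabla w^{(n+1)}$ (which is tangent to $\partial\Omega$ because of the no-flux condition, so characteristics stay in $\overline\Omega$). A comparison argument on the characteristic ODE $\dot r=r(1-r)(F_2-F_1)$ shows that $r^{(n+1)}\in[0,1]$, and Grönwall estimates in a suitable norm (using the uniform parabolic regularity of $w^{(n+1)}$) give contraction on small time intervals. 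One then recovers $u_1=(1-r)w$, $u_2=rw$, verifies that they satisfy \eqref{eq:pde} classically, and continues to $[0,\infty)$ using the uniform $L^\infty$ bound on $w$ and uniform-in-time bounds on $r$ via the characteristic representation.

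The main obstacle is the continuation to all of $[0,\infty)$. Local existence in $C^{2,1}$ is essentially parabolic Schauder theory once $w$ is controlled from below, but promoting the local-in-time solution to a global one requires uniform-in-time bounds on $\|\nabla w(\cdot,t)\|_{C^\beta}$ so that the transport equation for $r$ admits Lipschitz characteristics for all $t>0$. This in turn depends on an $L^\infty$ bound on $w$ that is stable under the nonlinear source $wG(r,w)$; the specific algebraic form \eqref{LVbertsch} is precisely what makes $G(r,w)\le 0$ for $w$ large, which is the key mechanism preventing blow-up of $w$ and hence delivering the global-in-time classical solution.
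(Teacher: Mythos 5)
Your decomposition into total density $w=u_1+u_2$ and proportion $r=u_2/w$ is exactly the change of unknowns that \cite{bertsch12} uses, and that the paper records after the statement of Theorem~\ref{th:bertsch} (they write $v=u_1/u$, i.e.\ $v=1-r$, which is cosmetic). Your derivation of the transport equation $\partial_t r-a\nabla w\cdot\nabla r=r(1-r)(F_2-F_1)$ is correct, as is the observation that the no-flux boundary condition makes $-a\nabla w$ tangent to $\partial\Omega$ so that characteristics remain in $\overline\Omega$, and the use of the logistic sign structure of \eqref{LVbertsch} to obtain time-uniform two-sided bounds on $w$ and hence globalize. The one substantive divergence from the cited proof is the fixed-point scheme: you propose a Picard/contraction iteration, while \cite{bertsch12} applies the Schauder fixed-point theorem and, as the paper explicitly notes, passes to Lagrangian coordinates along the flow of $-\nabla u$ precisely to manufacture the compactness Schauder needs. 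These are not interchangeable without care. The transport step does not gain regularity, so a contraction estimate for the coupled map in a single norm must be set up to compensate for the derivative loss between the two equations (e.g.\ $r$ measured one derivative below $w$, and a quantitative stability estimate for the characteristic flow in terms of $\|\nabla w_1-\nabla w_2\|$); your ``Gr\"onwall gives contraction on small intervals'' glosses over exactly this bookkeeping, which is the genuinely delicate step. This is also where the $C^3(\overline\Omega)$ hypothesis on $u_{i0}$ enters --- the paper remarks that this strong regularity is forced by the method, because one derivative is consumed in closing the loop between the parabolic and hyperbolic halves --- a point your sketch does not address. So: same reduction, same a priori mechanisms, same globalization, but a compactness argument replaced by a contraction argument whose norm choices would need to be spelled out to be convincing.
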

The requirement of the strong regularity of the initial data is due to method of proof.
Initially, the following formally equivalent system is solved for $u=u_1+u_2$ and $v=u_1/u$:

\begin{equation}
\label{eq:pde12}
\begin{cases}
& \partial_t u-\operatorname{div} (u\nabla u)=G_1(u,v) \quad \text{in $Q_T$},
\\
& \partial_t v- \nabla u\cdot\nabla v = G_2(u,v)  \quad \text{in $Q_T$},
\\
& u\nabla u\cdot n =0 \quad \text{on $\Gamma_T$},
\\
& u_0(\cdot,0)=u_{10}+u_{20}, \quad v_0(\cdot,0)=u_{10}/u \quad \text{in $\Omega$},
\end{cases}
\end{equation}
with some smooth functions $G_1,~G_2$. The proof of existence of solutions  of
\eqref{eq:pde12} is based on the Schauder fixed point theorem. In order to obtain the
required compactness for the fixed point operator, the authors pass to the system of
Lagrangian coordinates related to the flow $-\nabla u(t,x)$, and claim the strong regularity
assumptions on the initial data.

A similar problem  was studied in \cite{galiano12b} under weaker assumptions on the initial
data and with a more general flow of the type

\begin{align}
 J_i(u_1,u_2)=a u_i \nabla (u_1+u_2) +b q u_i +c\nabla u_i .   \label{def:flow}
\end{align}
The existence was proved with a different method.

\begin{theorem}[\cite{galiano12b}]
\label{th.gs} \label{th:orig} Assume the following conditions: for $i=1,2$

\begin{enumerate}
\item the flows $J_i(u_1,u_2)$ are given by \eqref{def:flow} with constant $a>0$, $c\geq0$
and $b\in \mathbb{R}$, $q\in L^2(Q_T)$ and $\operatorname{div} q \geq 0$ a.e. in $Q_T$,

\item $u_{i0}\in L^\infty(\Omega)$ with $u_{i0}\geq0$, $u_0=u_{10}+u_{20} \in H^2(\Omega)$
with  $u_{0} > \delta$ for some constant $\delta>0$, $\nabla u_0 \cdot \mathbf{n} =0$ on
$\partial\Omega$ (the compatibility condition),

\item $F_1(u_1,u_2)=F_2(u_1,u_2)=F(u_1+u_2)$ with $F\in C^0(\mathbb{R}_+)$,  $\forall s\geq
0,~ F(\cdot,\cdot,s)\leq C s$  with $C>0$.
\end{enumerate}

Then problem \eqref{eq:pde} has a weak solution $(u_1, u_2 )$ understood in the following
sense:

\begin{itemize}
\item[(i)]  $u_{i}\geq 0$, $u_{i}\in L^{\infty}(Q_T)\cap H^{1}(0,T;(H^{1}(\Omega  ))^{'})$,

\item[(ii)]  for all $\varphi  \in L^2 (0,T; H^{1}(\Omega))$

\begin{equation}
 \int_{0}^{T}\langle \partial_t u_{i} ,\varphi \rangle
    + \int_{Q_T }  J_i(u_1,u_2) \cdot\nabla\varphi
=\int_{Q_T }u_i F(u_1+u_2)~\varphi, \label{weak.fin}
\end{equation}
where $\langle\cdot ,\cdot\rangle$ denotes the duality product of $(H^1(\Omega ))^{'}\times
H^1(\Omega )$,

\item[(iii)]  the initial conditions in \eqref{eq:pde} are satisfied in the sense

\begin{equation}
\notag \lim_{t \to 0}\| u_{i}(\cdot ,t)-u_{i0}\|_{(H^{1}(\Omega ))^{'}}=0 \quad
\text{as}\quad t\rightarrow 0.
\end{equation}
\end{itemize}
\end{theorem}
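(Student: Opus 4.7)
The plan is to construct the solution by a fully discrete approximation scheme, derive uniform a priori estimates, and pass to the limit by compactness, as indicated by the introductory discussion of \cite{galiano12b}. The key structural observation is that summing the two equations yields a scalar quasilinear parabolic equation for the total density $w=u_1+u_2$, which is uniformly parabolic if $c>0$ and degenerate of porous-medium type if $c=0$.

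First, I would introduce a conforming finite element Galerkin discretization in space (piecewise affine elements on a quasi-uniform triangulation) coupled with an implicit Euler scheme in time. For each time step, the nonlinear discrete system is solved by a Brouwer fixed point argument applied in the positive cone, using non-negativity of the initial data and the sign hypothesis $\operatorname{div} q \geq 0$ to preserve positivity. A priori estimates are then derived: summing the two discrete equations and combining the growth assumption $F(s)\leq Cs$ with a discrete Gronwall inequality yields a uniform $L^{\infty}(Q_T)$ bound on $w$, hence on each $u_i$; testing the summed equation by $w$ gives an $L^2(0,T;H^1(\Omega))$ bound on $w$; testing each individual equation by arbitrary discrete $H^1$ functions produces a uniform $H^1(0,T;(H^1(\Omega))')$ estimate on $u_i$, using assumption (1) and the $L^2$ integrability of $q$.

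Next, Aubin--Lions compactness combined with these estimates provides, up to subsequences, strong convergence of the discrete $w_h$ in $L^2(Q_T)$, weak convergence of $\nabla w_h$ in $L^2(Q_T)$, weak-$*$ convergence of $u_{i,h}$ in $L^{\infty}(Q_T)$, and strong convergence of $u_{i,h}$ in $L^2(Q_T)$, the latter extracted from the duality bound on the time-derivatives combined with the $L^\infty$ bound via a Riesz--Fr\'echet--Kolmogorov type argument. Passage to the limit in the weak formulation \eqref{weak.fin} then proceeds termwise: the product of a strongly $L^2$-convergent sequence and a weakly $L^2$-convergent sequence converges in the sense of distributions, which is enough to identify the cross-diffusion flux $a\,u_i\nabla(u_1+u_2)$ and the convective term $b\,q\,u_i$; the reaction term $u_iF(u_1+u_2)$ passes to the limit by the continuity of $F$ and the strong convergence of $w_h$ and $u_{i,h}$; the recovery of the initial condition in the $(H^1(\Omega))'$ norm follows from the uniform time-derivative bound.

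The main obstacle is the passage to the limit in the nonlinear cross-diffusion flux $a\,u_i\nabla(u_1+u_2)$, which hinges on strong $L^2(Q_T)$ compactness of the individual $u_{i,h}$. Each $u_i$ taken separately does not satisfy a uniformly parabolic equation; only the sum $w$ does. Consequently the equicontinuity-in-time estimate for $u_{i,h}$ must be obtained by duality from its individual discrete evolution equation, and then combined with the $L^\infty$ bound through an interpolation argument. This is the technical heart of the construction of \cite{galiano12b} and the step most sensitive to the structure of the flow \eqref{def:flow}, in particular to the hypothesis $\operatorname{div} q\geq 0$ which guarantees absorption of the convective term in the energy estimate.
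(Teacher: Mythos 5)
There is a genuine gap at the compactness step, which you correctly identify as the technical heart of the argument but do not actually close. A uniform bound on $u_{i}$ in $L^{\infty}(Q_T)$ together with a uniform bound on $\partial_t u_{i}$ in $L^{2}(0,T;(H^{1}(\Omega))')$ does \emph{not} give strong $L^2(Q_T)$ compactness of the individual components. The time-derivative bound yields equicontinuity in time only; nothing in your scheme controls spatial oscillations of $u_i$ separately. (Take $u_n(x,t)=\sin(nx)$: bounded in $L^\infty$, $\partial_t u_n\equiv 0$, weakly but not strongly convergent.) Aubin--Lions requires an intermediate space that embeds compactly into $L^2(\Omega)$, and $L^\infty(\Omega)$ does not; Riesz--Fr\'echet--Kolmogorov requires equicontinuity under spatial translations, which you do not have. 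The obstruction is structural: in the singular case $c=0$ the entropy estimate \eqref{entropy} collapses to a bound on $\nabla(u_1+u_2)$ alone, so no individual $L^2(0,T;H^1)$ estimate on $u_i$ is available, and strong compactness of the $u_{i,h}$ cannot be salvaged from your estimates.

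The argument sketched in Section~\ref{sec:special} closes the gap the opposite way around: it keeps only $L^\infty$ weak-$*$ convergence for $u_i^{(\delta)}$ and instead upgrades the convergence of the \emph{sum}, obtaining strong $L^2(Q_T)$ convergence of $\nabla(u_1^{(\delta)}+u_2^{(\delta)})$ from a uniform $L^2(0,T;H^2(\Omega))$ bound on $u^{(\delta)}=u_1^{(\delta)}+u_2^{(\delta)}$, which solves a scalar, uniformly parabolic equation. This is precisely where the hypotheses $u_0\in H^2(\Omega)$, $u_0>\delta>0$, and the compatibility condition $\nabla u_0\cdot\mathbf{n}=0$ enter; your proposal never uses them, and your $L^2(0,T;H^1)$ bound on $w_h$ obtained by testing with $w$ is insufficient for this purpose. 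The approximation is also tailored to this strategy: the perturbed flux $J_i^{(\delta)}=J_i+\tfrac{\delta}{2}\Delta(u_iu)$ is chosen so that the entropy method with $\log u_i$ yields nonnegative approximations at fixed $\delta$, and so that the extra viscosity sums to a porous-medium term compatible with the parabolic regularity of the sum equation. With strong $\nabla u^{(\delta)}$ in $L^2$ and weak-$*$ $u_i^{(\delta)}$ in $L^\infty$, the products $u_i^{(\delta)}\nabla u^{(\delta)}$ and $u_i^{(\delta)}F(u^{(\delta)})$ pass to the limit; with your pairing (strong $u_{i,h}$, weak $\nabla w_h$) the flux cannot be identified because the first half of the pairing is unavailable. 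As the paper itself remarks, a strong $L^1(Q_T)$ compactness of the $u_i^{(\delta)}$ would allow one to relax the $H^2$ assumption on the initial data, but obtaining it requires additional structure (e.g.\ the $BV$ estimates of Bertsch et al.\ in one space dimension) that is not part of your scheme.
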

The proof of this theorem is based on the following two observations. Firstly, note that if
a weak solution of \eqref{eq:pde} does exist, then the addition of its components,
$u=u_1+u_2$ satisfies the equation

\begin{equation}
 \label{prob:suma.ec}
\partial_t u -\operatorname{div} J(u) =uF(u) \quad\text{in $Q_T$}
\end{equation}
with the flow

\begin{equation}
\label{prob:suma.flujo}
 J(u) = u\big(a\nabla u +b q \big) +
 c\nabla u,
\end{equation}
together with non-flow boundary conditions and the initial datum satisfying $u_0>0$ on
$\Omega$. Existence and uniqueness of $L^\infty (Q_T)\cap L^2(0,T;H^2(\Omega))$ positive
solutions to this uniformly parabolic problem is a well-known issue, see, e.g., \cite{LSU}.
Then,  the non-negativity of the solutions $u_i$ of problem \eqref{eq:pde} results in
$u_i\in L^\infty(Q_T)$, $i=1,2$, which is a property difficult to obtain directly from the
analysis of system \eqref{eq:pde}.

As a second observation, let us note that the usual approach to the proof of existence of
solutions to cross-diffusion systems in the most conflicting case $c_i=0$ is based on
justifying the use of $\log u_i$  as a test-function in \eqref{weak.fin} in order to obtain
estimates from the addition of the resulting identities

 \begin{eqnarray}
\label{entropy}
 \int_\Omega h(u_i(T,\cdot))+
\int_{Q_T}  \big(|\nabla u_i|^2 + \nabla u_1 \cdot \nabla u_2 \big)
 \leq C,
\end{eqnarray}
with $ h(u_i)=u_i(\log u_i-1) +1$. However, in the present case the singularity of the
diffusion matrix corresponding to \eqref{eq:pde} prevents us from obtaining the $L^2$
estimates for $\nabla u_i$ from \eqref{entropy}. To circumvent this difficulty and keep at
the same time the good properties derived for the addition of the components of a solution,
the following perturbation of the original problem is introduced:

\begin{equation}
\notag
 \label{prob:pert.ec}
\partial_t
u_{i} -\operatorname{div} J_i^{(\delta)}(u_1,u_2)=u_iF(u)\quad \text{in $Q_T$}
\end{equation}
with

\begin{equation}
\label{reg:flow}
J_i^{(\delta)}(u_1,u_2)= J_i(u_1,u_2) +\frac{\delta}{2} \Delta(u_iu),
\end{equation}
subject to the non-flow boundary conditions. Using results of \cite{ggj03} one may deduce
the existence of a sequence of non-negative functions $(u_1^{(\delta)},u_2^{(\delta)})$.
Moreover, it turns out that the sum $u_{1}^{(\delta)}+u_{2}^{(\delta)}$ is uniformly bounded
in $L^\infty(Q_T)$. This fact allows one to pass to the limit, which leads to the assertion
of Theorem \ref{th:orig}. The difficulties in identifying the limit of the sequence of
solutions to the approximated problems are delivered by the diffusive and the Lotka-Volterra
terms

\[
 \int_{Q_T}u_i^{(\delta)}\nabla{(u_1^{(\delta)}+u_2^{(\delta)})}\cdot \nabla\varphi ,
\qquad  \int_{Q_T}u_i^{(\delta)}F_i(u_1^{(\delta)},u_2^{(\delta)})\varphi  .
\]
Since the $L^\infty(Q_T)$ weak-$*$ convergence is the only convergence for the independent
components $u_i^{(\delta)}$ obtained from the approximated problems, stronger conditions on
the data of the problems for $u_1^{(\delta)}+u_2^{(\delta)}$ are required in order to pass
to the limit. To be precise, one needs the strict positivity and $H^2(\Omega)$ regularity of
the initial data. Notice, however, that if a strong convergence of $u_i^{(\delta)}$ in, for
instance, $L^1(Q_T)$ is proven, then the assumptions on $u_{01}+u_{02}$ may be weakened in
such a way that just the usual $L^2(Q_T)$ weak convergence of
$\nabla{(u_1^{(\delta)}+u_2^{(\delta)})}$ holds. In addition,  in this case some other
restrictions on the coefficients, such as the equality of the diffusive terms $a_1=a_2$, or
the restriction on the form of the Lotka-Volterra terms, can be removed. In the
one-dimensional case Bertsch et al. \cite{bertsch10} proved $BV(Q_T)$ uniform estimates for
the vanishing viscosity approximation to \eqref{eq:pde12}, which allowed one to get strong
convergence in $L^1(Q_T)$. However, these estimates depend on the $L^2(Q_T)$-norm of the
Laplacian of the sum, thus leading to similar regularity assumptions on the initial data.
Let us finally notice that, due to the discontinuities arising in the limit problem, the
uniform estimate for $u_1^{(\delta)}+u_2^{(\delta)}$ in $SBV(Q_T)$ is the strongest estimate
that can be expected.

\subsection{A constructive example for the contact-inhibition problem}
We consider a particular situation of the contact-inhibition problem in which an explicit
solution of \eqref{eq:pde}  may be computed in terms of a suitable combination of the
Barenblatt explicit solution of the porous medium equation, the Heavyside function and the
trajectory of the contact-inhibition point. To be precise, we construct a solution to the
problem

\begin{align}
& \partial_t u_{i} - (u_i (u_1+u_2)_x)_x =0&& \text{in }(-R,R)\times(0,T)=Q_T,&\label{eq:s1}\\
&u_i (u_1+u_2)_x = 0 &&\text{on } \{-R,R\}\times(0,T),& \label{eq:s2}
\end{align}
with
\begin{equation}
\label{def:us}
 u_{10}(x)=H(x-x_0)B(x,0), \quad u_{20}(x)=H(x_0-x)B(x,0).
\end{equation}
Here, $H$ is the Heavyside function and $B$ is the Barenblatt solution of the porous medium
equation corresponding to the initial datum $B(x,-t^*)=\delta_0$, i.e.

\begin{equation}
\notag \label{barenblatt}
 B(x,t)= 2 (t+t^*)^ {-1/3} \big[1-\frac{1}{12}x^ 2(t+t^*)^ {-2/3}\big]_+.
\end{equation}
For simplicity, we consider problem \eqref{eq:s1}-\eqref{def:us} for $T>0$ such that
$R(T)<R^ 2$, with $R(t)=\sqrt{12}(t+t^*)^ {1/3}$, so that $B(R,t)=0$ for all $t\in[0,T]$.
The point $x_0$ is the initial contact-inhibition point, for which we assume $|x_0|<R(0)$,
i.e. it belongs to the interior of the support of $B(\cdot,0)$, implying that the initial
mass of both populations is positive.

\begin{theorem}\label{th.easy}
 The functions

\begin{equation}
\notag
 u_1(x,t)=H(x-\eta(t))B(x,t),\quad u_2(x,t)=H(\eta(t)-x)B(x,t),
\end{equation}
with $\eta(t)=x_0(t/t^*)^{1/3}$, are a weak solution of problem \eqref{eq:s1}-\eqref{def:us}
in the following sense:

\begin{equation}
\notag \int_{-R}^ {R} \big( (u_{i} \varphi)(\cdot,T) - u_{i0} \varphi(\cdot,0) \big)
 - \int_{Q_T}  u_{i} (\varphi_t -  (u_1 +u_2 )_x \varphi_x)  =  0
\end{equation}
for all $\varphi\in H^1(Q_T)$.
\end{theorem}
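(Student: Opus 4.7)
The plan is to reduce the verification of the weak formulation to the classical theory for the porous medium equation applied separately on the two open subdomains
\[
Q_T^{\pm} = \{(x,t) \in Q_T : \pm(x-\eta(t)) > 0\},
\]
which cover $Q_T$ up to the measure-zero trajectory $\Sigma = \{x=\eta(t)\}$. Because $H(x-\eta(t))+H(\eta(t)-x)=1$ almost everywhere, one has $u_1+u_2=B$ pointwise off $\Sigma$, so $(u_1+u_2)_x=B_x$ on $Q_T\setminus\Sigma$. Inside its support $|x|<R(t)$ the Barenblatt profile $B$ is smooth and classically solves $B_t=(BB_x)_x$; by the hypotheses $R(T)<R$ and $|x_0|<R(0)$, both the interface $\Sigma$ and the sets $Q_T^{\pm}$ lie in the interior of this support, where all the required traces and derivatives are well defined.

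The critical ingredient is the interface equation
\[
\eta'(t) = -B_x(\eta(t),t),
\]
the one-dimensional specialization of the modified Darcy law \eqref{eq:Darcy} when the reaction term vanishes (so the auxiliary pressure $p$ is identically zero). A direct computation from the explicit Barenblatt formula yields $B_x(x,t)=-x/(3(t+t^{\ast}))$ in the interior of the support, while differentiation of the explicit self-similar trajectory $\eta$ gives $\eta'(t)=\eta(t)/(3(t+t^{\ast}))$, so the ODE is satisfied (and $\eta(0)=x_0$ matches the initial segregation point).

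Given this, for a test function $\varphi\in H^{1}(Q_T)$ I would apply the planar divergence theorem to the vector field $F=(-u_1 B_x\varphi,\,u_1\varphi)$ on $Q_T^+$. On that subdomain $u_1=B$ solves the PME classically, hence
\[
\operatorname{div}_{(x,t)}F = u_1\varphi_t - u_1 B_x\varphi_x,
\]
and the integral $\int_{Q_T^+}(u_1\varphi_t-u_1 B_x\varphi_x)\,dxdt$ equals the outward flux of $F$ through $\partial Q_T^+$. The top $\{t=T\}$ and bottom $\{t=0\}$ portions contribute $\int u_1(\cdot,T)\varphi(\cdot,T)\,dx$ and $-\int u_{10}\varphi(\cdot,0)\,dx$ respectively; the lateral pieces $x=\pm R$ carry no flux because $B$ vanishes there, so condition \eqref{eq:s2} is automatically respected; and on $\Sigma$, whose outward conormal from $Q_T^+$ is proportional to $(-1,\eta'(t))$, the contribution reduces to
\[
\int_0^T u_1(\eta(t)^+,t)\bigl[B_x(\eta(t),t)+\eta'(t)\bigr]\varphi(\eta(t),t)\,dt,
\]
which vanishes precisely by the interface ODE. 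Since $u_1\equiv 0$ on $Q_T^-$, the contribution from the complementary subdomain is trivial, and assembling the pieces yields the required weak identity for $u_1$. The argument for $u_2$ is entirely symmetric, with the roles of $Q_T^{\pm}$ swapped.

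The main obstacle will be the bookkeeping on the interface: one must orient the conormal correctly, use the appropriate one-sided traces of $u_i$ and $B_x$ on $\Sigma$, and rely on the precise self-similar form of $\eta(t)$ to make the jump terms cancel. Everything else amounts to the explicit smoothness of $B$ in the interior of its support and a routine application of the divergence theorem.
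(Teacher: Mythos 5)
Your proof is correct, and it takes a genuinely different route from the paper's, although both arguments turn on the same key identity $\eta'(t) + B_x(\eta(t),t) = 0$. The paper does not work directly with $H$; it first proves an intermediate lemma using the mollified Heaviside $H_\epsilon$, inserting $\varphi H_\epsilon(x - \eta(t))$ (and $\varphi H_\epsilon(\eta(t) - x)$) into the weak formulation satisfied by the Barenblatt solution. The leftover term $I^1_\epsilon = -\int_{Q_T} \varphi\,B\,H_\epsilon'(x-\eta)\,(\eta' + B_x)$ is then bounded by $C\epsilon$ precisely because the bracket vanishes on $\Sigma$ and is Lipschitz in $x$, and the theorem follows by letting $\epsilon\to 0$. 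You bypass the regularization entirely: you split $Q_T$ along $\Sigma$, apply the Gauss--Green theorem to the space-time flux field $F=(-u_1B_x\varphi,\,u_1\varphi)$ on $Q_T^+$, and show the interface boundary term carries the factor $B_x+\eta'$, which vanishes. This is more direct and makes the Rankine--Hugoniot structure of the interface condition transparent; the paper's approach trades that geometric clarity for working only with smooth objects.

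One point you gloss over and should state explicitly: $Q_T^+$ is \emph{not} contained in the open support of $B$, since the Barenblatt free boundary $\{x=R(t)\}$ cuts across $Q_T^+$ under the hypothesis $R(T)<R$. So $B$ does not ``classically solve'' the PME throughout $Q_T^+$, and $B_x$ is discontinuous across $\{x=R(t)\}$. The divergence-theorem argument nonetheless survives because the vector field $F=(-BB_x\varphi,\,B\varphi)$ (not $B$ itself) is Lipschitz across the Barenblatt free boundary — both $B$ and the flux $BB_x$ vanish there — so $\operatorname{div}_{(x,t)} F$ agrees a.e. with its pointwise expression $u_1\varphi_t - u_1B_x\varphi_x$ and picks up no surface contribution on $\{x=R(t)\}$. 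Similarly, since $\varphi$ is only $H^1$, the identity must be justified for Lipschitz (rather than $C^1$) vector fields on the Lipschitz domain $Q_T^+$, e.g.\ by density. A small arithmetic point: the trajectory printed in the statement, $\eta(t)=x_0(t/t^*)^{1/3}$, does not satisfy $\eta(0)=x_0$; your derived ODE $\eta'(t)=\eta(t)/(3(t+t^*))$ corresponds to $\eta(t)=x_0((t+t^*)/t^*)^{1/3}$, which is what both you and the paper actually use.
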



Let $H_\epsilon$ the regularization of the Heavyside function taking the values
$\left\{1,\frac{1}{2}(1-x/\epsilon),0\right\}$ in the intervals $(-R,-\epsilon)$,
 $(-\epsilon,\epsilon)$ and $(\epsilon,R)$, respectively, for $\epsilon>0$ small. The proof of the above theorem is based on the approximation result given in the next lemma.

\begin{lemma}
 Let $u_i^ \epsilon:[0,T]\times[-R,R]$, $i=1,2$, be given by

\begin{equation}
\label{def:us2} u_1^ \epsilon(x,t)=H_\epsilon(x-\eta(t))B(x,t), \quad u_2^
\epsilon(x,t)=H_\epsilon(\eta(t)-x)B(x,t)
\end{equation}
with $\eta(t)=x_0(t/t^*)^{1/3}$. Then

\begin{equation}
\notag \left|\int_{-R}^ {R} \big( (u_{i\epsilon} \varphi)(\cdot,T) - (u_{i\epsilon}
\varphi)(\cdot,0)\big) - \int_{Q_T}  u_{i}^ \epsilon (\varphi_t -  (u_1^ \epsilon+u_2^
\epsilon )_x \varphi_x)  \right|\leq C\epsilon
\end{equation}
for all $\varphi\in H^ 1((t^*,T)\times(-R,R))$.

\end{lemma}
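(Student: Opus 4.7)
The plan rests on the algebraic identity $H_\epsilon(s)+H_\epsilon(-s)\equiv 1$ on $\mathbb R$, which gives at once
\[
u_1^\epsilon+u_2^\epsilon = B\quad\text{in }Q_T,
\]
so the flux $(u_1^\epsilon+u_2^\epsilon)_x$ entering the lemma is simply $B_x$, independent of $\epsilon$. I would then integrate by parts twice inside the absolute value: once in $t$, which moves $\varphi_t$ onto $u_i^\epsilon$ and generates precisely the two time traces already appearing on the left-hand side, and once in $x$, whose boundary terms at $x=\pm R$ vanish because $B(\pm R,\cdot)\equiv 0$ under the assumption $R(T)<R$. The quantity to be estimated thus collapses to
\[
\int_{Q_T}E_i^\epsilon\,\varphi\,dx\,dt,\qquad E_i^\epsilon:=\partial_t u_i^\epsilon-(u_i^\epsilon B_x)_x.
\]

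Next I would compute $E_1^\epsilon$ explicitly. Writing $u_1^\epsilon=\Phi(x,t)\,B(x,t)$ with $\Phi=H_\epsilon(x-\eta(t))$ and using the porous-medium identity $B_t=(BB_x)_x$ satisfied by the Barenblatt profile, a short calculation gives
\[
E_1^\epsilon = B\,(\Phi_t-\Phi_x B_x) = -H_\epsilon'(x-\eta(t))\,B(x,t)\,\bigl[\eta'(t)+B_x(x,t)\bigr],
\]
together with the symmetric expression for $E_2^\epsilon$. Since $|H_\epsilon'|=\tfrac{1}{2\epsilon}\mathbf 1_{|x-\eta(t)|\le\epsilon}$, the error $E_i^\epsilon$ is confined to the thin strip $\{|x-\eta(t)|\le\epsilon\}$ and is a priori of size $1/\epsilon$ there.

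The crucial algebraic fact is that the trajectory $\eta$ is precisely the Lagrangian curve of the Darcy velocity $-B_x$ of the Barenblatt flow: using the explicit form $B_x(x,t)=-x/[3(t+t^*)]$ inside the support of $B$ together with the definition of $\eta$, one verifies that $\eta'(t)+B_x(\eta(t),t)=0$. A first-order Taylor expansion in $x$ about $\eta(t)$ then yields
\[
|\eta'(t)+B_x(x,t)| \le \|B_{xx}\|_{L^\infty(Q_T)}\,|x-\eta(t)| \le C\epsilon
\]
on the support of $H_\epsilon'$, and hence the uniform pointwise bound $|E_i^\epsilon(x,t)|\le C\,\mathbf 1_{|x-\eta(t)|\le\epsilon}$ with $C$ independent of $\epsilon$.

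The main obstacle, and the only delicate step, is upgrading this pointwise bound to an $O(\epsilon)$ (not merely $O(\sqrt\epsilon)$) estimate for $\int_{Q_T}E_i^\epsilon\,\varphi$ when $\varphi$ lies only in $H^1(Q_T)$, whose ambient dimension is $2$ and for which the Sobolev embedding does not reach $L^\infty$. I would overcome this by slicing in $t$ and writing
\[
\varphi(x,t) = \varphi(\eta(t),t)+\int_{\eta(t)}^x \varphi_s(s,t)\,ds,
\]
which, after a Cauchy--Schwarz on the inner integral, gives
\[
\int_{\eta(t)-\epsilon}^{\eta(t)+\epsilon}|\varphi(x,t)|\,dx \le 2\epsilon\,|\varphi(\eta(t),t)| + C\epsilon^{3/2}\|\varphi_x(\cdot,t)\|_{L^2(-R,R)}.
\]
Integrating in $t$ and applying the trace inequality $\int|\varphi(\eta(t),t)|\,dt\le C\|\varphi\|_{H^1(Q_T)}$, valid since $t\mapsto\eta(t)$ is a $C^1$ curve strictly interior to $(-R,R)$, yields the claimed $C\epsilon$ bound. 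Combining the four ingredients \textemdash{} reduction to $\int E_i^\epsilon\varphi$, explicit form of $E_i^\epsilon$, vanishing of $\eta'+B_x$ on the interface, and the sliced trace estimate \textemdash{} completes the proof.
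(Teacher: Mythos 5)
Your argument is essentially the paper's, repackaged. The paper plugs $\varphi_\epsilon=\varphi\,H_\epsilon(x-\eta(t))$ into the weak form of the porous medium equation satisfied by $B$; after unpacking, the residual is exactly your $\int_{Q_T}E_1^\epsilon\varphi$, and the explicit formula
\[
I^1_\epsilon=-\int_{Q_T}\varphi\,B\,H'_\epsilon(x-\eta)\bigl(\eta'+B_x\bigr)\,dx\,dt
\]
is the same object you obtain by computing $E_1^\epsilon=-B\,H'_\epsilon(x-\eta)(\eta'+B_x)$ from the identity $B_t=(BB_x)_x$. Where you Taylor-expand $\eta'+B_x$ about $x=\eta(t)$ using the ODE $\eta'=-B_x(\eta,t)$, the paper computes exactly $\eta'(t)+B_x(x,t)=-(x-\eta(t))/[3(t+t^*)]$, which is the same thing since $B_x$ is linear in $x$ on the support; both yield the crucial extra factor $|x-\eta|\le\epsilon$. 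The one place you genuinely go beyond the paper is the last step: the paper concludes $|I^1_\epsilon|\le C\epsilon$ by ``$\varphi$ and $B$ are uniformly bounded in $L^\infty$,'' which does not follow from $\varphi\in H^1$ in two space-time dimensions (only $\varphi\in L^p$ for all finite $p$, giving $O(\epsilon^{1-1/p})$). Your sliced trace estimate
\[
\int_0^T\!\!\int_{|x-\eta(t)|\le\epsilon}|\varphi|\,dx\,dt\le 2\epsilon\int_0^T|\varphi(\eta(t),t)|\,dt+C\epsilon^{3/2}\|\varphi_x\|_{L^2(Q_T)}\le C\epsilon\|\varphi\|_{H^1(Q_T)}
\]
closes that gap and actually delivers the stated $O(\epsilon)$ bound under the $H^1$ hypothesis as written. (Minor caveats you should make explicit: the Taylor bound for $B_{xx}$ is valid only as long as the $\epsilon$-strip around $\eta$ remains inside the support of $B$, which holds for small $\epsilon$ since $|x_0|<R(0)$; and the ODE $\eta'=-B_x(\eta,\cdot)$ is literally the formula in the paper's remark, though as stated $\eta(t)=x_0(t/t^*)^{1/3}$ appears to have a typo and should read $x_0((t+t^*)/t^*)^{1/3}$ for the cancellation to hold.)
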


\begin{proof} Observe that $u_{i}^{\epsilon}$ are continuous and bounded in $\Omega\times(t^ *,T)$, and
satisfy $u_1^ \epsilon+u_2^ \epsilon =B$. Therefore, $u_1^ \epsilon+u_2^ \epsilon \in
L^2(t^*,T;H^1(-R,R))$ uniformly in $\epsilon$. Let $\varphi\in H^ 1(Q_T)$.  Using
$\varphi_\epsilon (x,t)=\varphi(x,t) H_\epsilon(x-\eta(t)) $ as the test-function in the
weak formulation of the problem satisfied by the Barenblatt solution in $Q_T$ we obtain

\begin{equation}
\notag \label{weak.B}
 \int_{-R}^ {R} \big( (H_\epsilon B \varphi)(\cdot,T) - (H_\epsilon B \varphi)(\cdot,t^*)\big) -
 \int_{Q_T}  H_\epsilon B (\varphi_t  - B_x \varphi_x )   =I^ 1_\epsilon,
\end{equation}
with

\begin{eqnarray*}
I^ 1_\epsilon
  = -\int_{Q_T} \varphi(x,t) B(x,t) H'_\epsilon(x-\eta(t)) \big(\eta'(t) + B_x(x,t)\big) dx dt.
\end{eqnarray*}
Since $|x_0|<R(0)$, we have $\eta(t)<R-\epsilon$, for $\epsilon$ small enough and
$t\in(0,T)$, and then using the explicit expression of $B_x$ and $\eta'(t)$ we deduce

\begin{equation}
\notag \label{eq.I} I_\epsilon^1=\frac{1}{6\epsilon}\int_{0}^T\int_{-\epsilon}^\epsilon y
\varphi(y+\eta(t),t)B(y+\eta(t),t)dydt.
\end{equation}
Since $\varphi$ and $B$ are uniformly bounded in $L^\infty$, we obtain

\begin{equation}
\label{est.I}
 |I^ 1_\epsilon|\leq C \epsilon ,
\end{equation}
with $C>0$ independent of $\epsilon$. The computation using
$\varphi(x,t)H_\epsilon(\eta(t)-x)$ as test function gives similar results for some
$I_\epsilon^ 2$ satisfying the same estimate \eqref{est.I} than $I_\epsilon^ 1$. Observing
that functions \eqref{def:us2} satisfy $u_1^\epsilon+u_2^\epsilon=B$, we finish the proof.
\end{proof}

\begin{proof}[Proof of Theorem \ref{th.easy}] Since $u_i^ \epsilon$ are uniformly bounded
in $L^ \infty (Q_T)$ we may perform the limit $\epsilon\to 0$ to deduce, on one hand, the
existence of $u_i\in L^ \infty(Q_T)$ such that

\begin{equation}
\notag \int_{-R}^ {R} \big( (u_{i} \varphi)(\cdot,T) - u_{0i} \varphi(\cdot,0)\big)
 - \int_{Q_T}  u_{i} (\varphi_t -  (u_1 +u_2 )_x \varphi_x)  =  0.
\end{equation}
On the other hand, taking the limit of expressions \eqref{def:us2} we get

\begin{equation}
\notag
 u_1(x,t)=H(x-\eta(t))B(x,t),\quad u_2(x,t)=H(\eta(t)-x)B(x,t).
\end{equation}
\end{proof}

\begin{remark}
The problem solved by $\eta$ is related to $B$ by the ODE problem

\begin{equation}
\notag
\label{eq.eta} \left\{
 \begin{array}{ll}
  \eta'(t)=-B_x(t,\eta(t)) & \text{for }t\in (0,T), \\
  \eta(0)=x_0,
 \end{array}
\right.
 \end{equation}
which ensures the mass conservation for each component. Indeed, defining

\begin{equation}
\notag
 M_i(t)=\int_{-R}^R u_i(x,t)dx = \int_{-R}^{ \eta(t)} B(x,t)dx,
\end{equation}
we find, using the equation satisfied by $B$ and its boundary conditions

\begin{equation}
\notag
\begin{split}
M'_i(t) & = \int_{-R}^{ \eta(t)} B_t(x,t)dx
+\eta'(t)B(\eta(t),t)
\\
& =B(\eta(t),t)B_x(\eta(t),t)+\eta'(t)B(\eta(t),t)=0.
\end{split}
\end{equation}
 \end{remark}
\begin{remark}
It is not difficult to extend the above construction to other one-dimensional problems. For
instance, for problem \eqref{eq:pde} we may consider the solution $u$ of
\eqref{prob:suma.ec}-\eqref{prob:suma.flujo} and the corresponding approximations of the
type \eqref{def:us2}. Then, to handle the integrals $I_\epsilon^i$, we first observe that
for $\epsilon\to0$ we get
\begin{eqnarray*}
 I^ 1_\epsilon \to -\int_0^T \varphi(\eta(t),t) B(\eta(t),t) \big(\eta'(t) + G(\eta(t),t)\big) dt,
\end{eqnarray*}
with $G = a u_x +b q + c (\log(u))_x.$ Therefore, if the ODE problem
\begin{equation}
\label{eq.eta2}
\left\{
 \begin{array}{ll}
  \eta'(t)=-G(t,\eta(t)) & \text{for }t\in (0,T), \\
  \eta(0)=x_0,
 \end{array}
\right.
 \end{equation}
is solvable, a solution for problem \eqref{eq:pde} may be constructed. Typical conditions on
$G$ for \eqref{eq.eta2} to be solvable are given in terms of Sobolev or BV regularity in
space for $G$ and $L^1(0,T;L^\infty (-R,R))$ regularity for the divergence of $G$, $G_x$ in
the one-dimensional case, see \cite{diperna89,ambrosio04} for further details.
\end{remark}

\subsection{Numerical experiments}

The discretization of \eqref{eq:pde} with the regularizing term given in \eqref{reg:flow}
follows the standard Finite Element methodology. To construct a solution we apply the
semi-implicit Euler scheme in time and a $\mathbb{P}_1$ continuous finite element
approximation in space and then study the behavior of solutions as $\delta\to 0$, see
\cite{galiano12b} for the details.

Let   $\tau>0$ be the time step of the discretization. For $t=t_0=0$, set $u_{\epsilon
i}^0=u_i^0$. Then, for $n\geq 1$ the problem is to find $u_{\epsilon
i}^{n}:(0,T)\times\Omega\to\mathbb{R}$ such that for,  $i=1,2$,
\begin{equation}\label{eq:pde_discr.s4}
\begin{array}{l}
\frac{1}{\tau}\big( u^n_{\epsilon i}-u^{n-1}_{\epsilon i} , \chi )^h + \big(
J^{(\delta)}_i(\Lambda _{\epsilon } (u^n_{\epsilon 1}),\Lambda _{\epsilon } (u^n_{\epsilon
2}),\nabla u^n_{\epsilon 1},\nabla u^n_{\epsilon 2}  ) ,\nabla\chi \big)^h =\\ [2ex]
\hspace*{1cm} = \big(\alpha_{i} u^n_{\epsilon i} - \lambda _{\epsilon } (u^n_{\epsilon i}) (
\beta_{i1} \lambda _{\epsilon } (u^{n-1}_{\epsilon 1}) + \beta_{i2} \lambda
_{\epsilon } (u^{n-1}_{\epsilon 2}) ) , \chi \big)^h , 
\end{array}
\end{equation}
for every $ \chi\in S^h $, the finite element space of piecewise $\mathbb{P}_1$-elements.
Here, $(\cdot,\cdot)^h$ stands for a discrete semi-inner product on
$\mathcal{C}(\overline{\Omega} )$. The parameter $\epsilon>0$ makes reference to the
regularization introduced by functions $\lambda_\epsilon$ and $\Lambda_\epsilon$, which
converge to the identity as $\epsilon\to0$.

Since \eqref{eq:pde_discr.s4} is a nonlinear algebraic problem, we use a fixed point
argument to approximate its solution,  $(u_{\epsilon 1}^n,u_{\epsilon 2}^n)$, at each time
slice $t=t_n$, from the previous approximation $u_{\epsilon i}^{n-1}$.  Let $u_{\epsilon
i}^{n,0}=u_{\epsilon i}^{n-1}$. Then, for $k\geq 1$ the problem is to find $u_{\epsilon
i}^{n,k}$ such that for $i=1,2$, and for all $\chi \in S^h$
\begin{equation*}
\begin{array}{l}
 \frac{1}{\tau}\big( u^{n,k}_{\epsilon i}-u^{n-1}_{\epsilon i} , \chi )^h
+ \big( J^{(\delta)}_i(\Lambda _{\epsilon } (u^{n,k-1}_{\epsilon 1}),\Lambda _{\epsilon }
(u^{n,k-1}_{\epsilon 2}),\nabla u^{n,k}_{\epsilon 1},\nabla u^{n,k}_{\epsilon 2}  )
,\nabla\chi \big)^h =\\ [2ex] \hspace*{1cm} = \big(\alpha_{i} u^{n,k}_{\epsilon i} - \lambda
_{\epsilon } (u^{n,k-1}_{\epsilon i}) ( \beta_{i1} \lambda _{\epsilon } (u^{n-1}_{\epsilon
1}) + \beta_{i2} \lambda
_{\epsilon } (u^{n-1}_{\epsilon 2}) ) , \chi \big)^h . 
\end{array}
\end{equation*}
We use the stopping criteria $\max _{i=1,2}
\|u_{\epsilon,i}^{n,k}-u_{\epsilon,i}^{n,k-1}\|_\infty <\text{tol}$, for empirically chosen
values of $\text{tol}$, and set $u_i^n=u_i^{n,k}$.

\begin{figure}[t]
\centering
 \subfigure
 {\includegraphics[width=4.cm,height=3.cm]{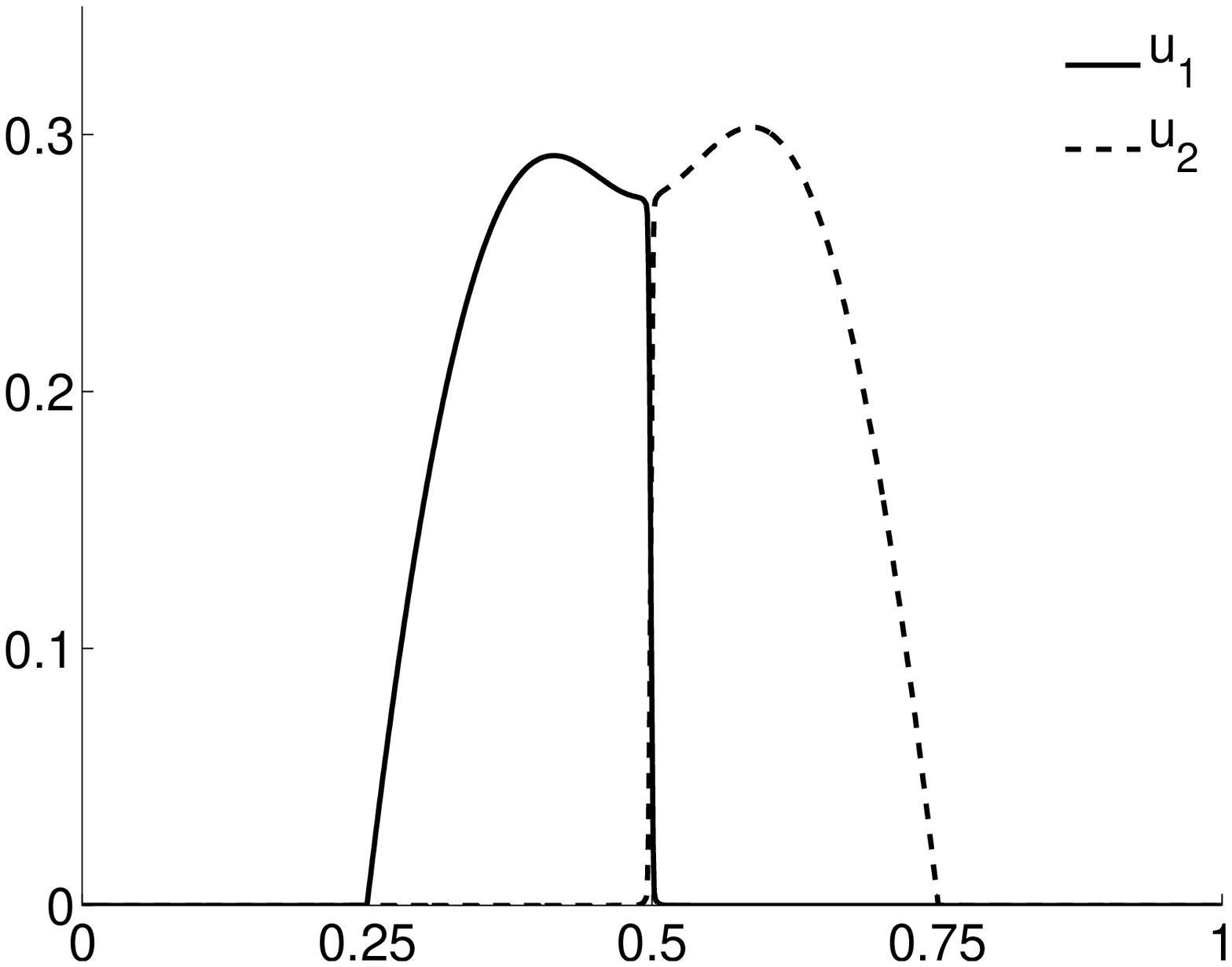}}
 {\includegraphics[width=4.cm,height=3.cm]{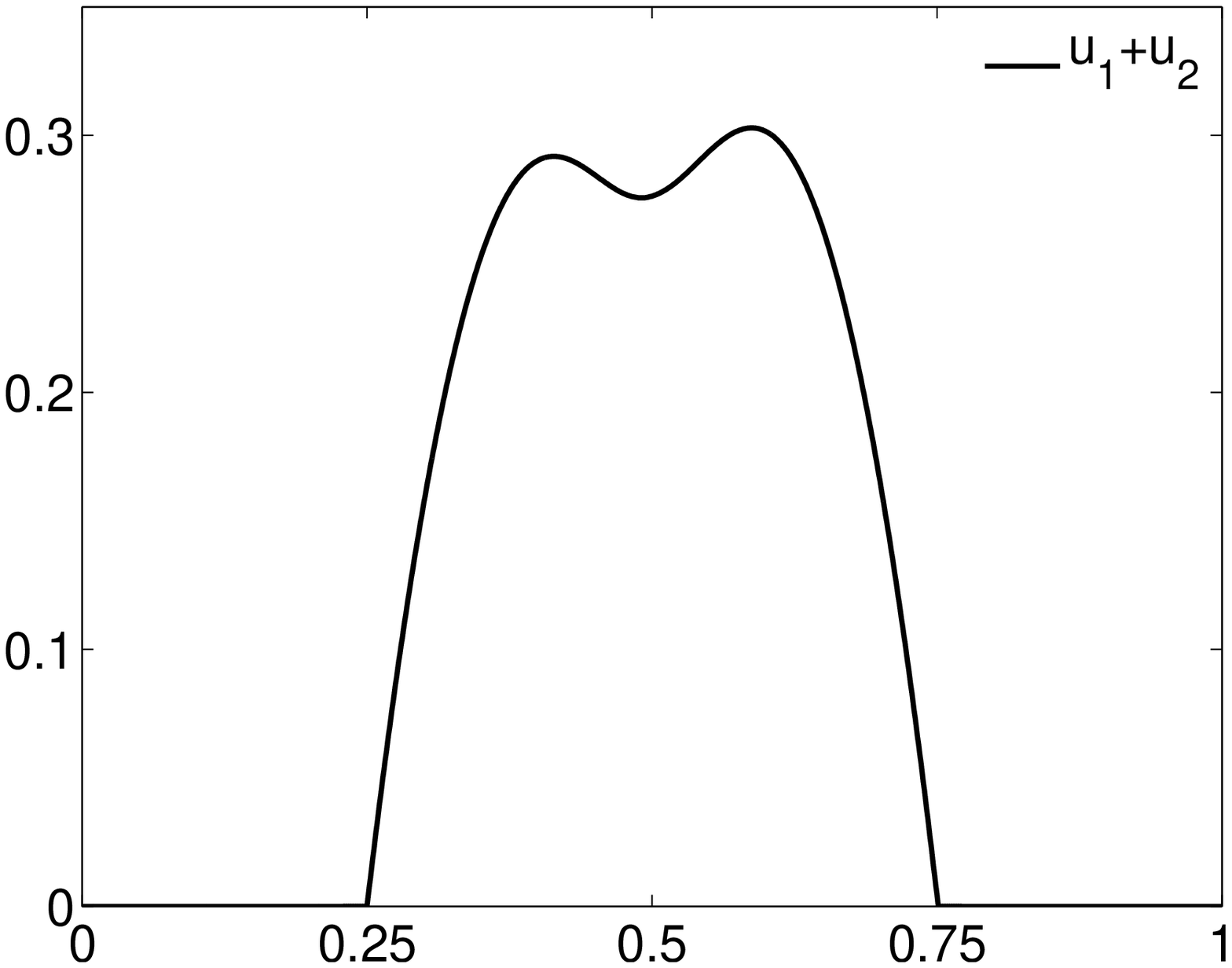}}
 {\includegraphics[width=4.cm,height=3.cm]{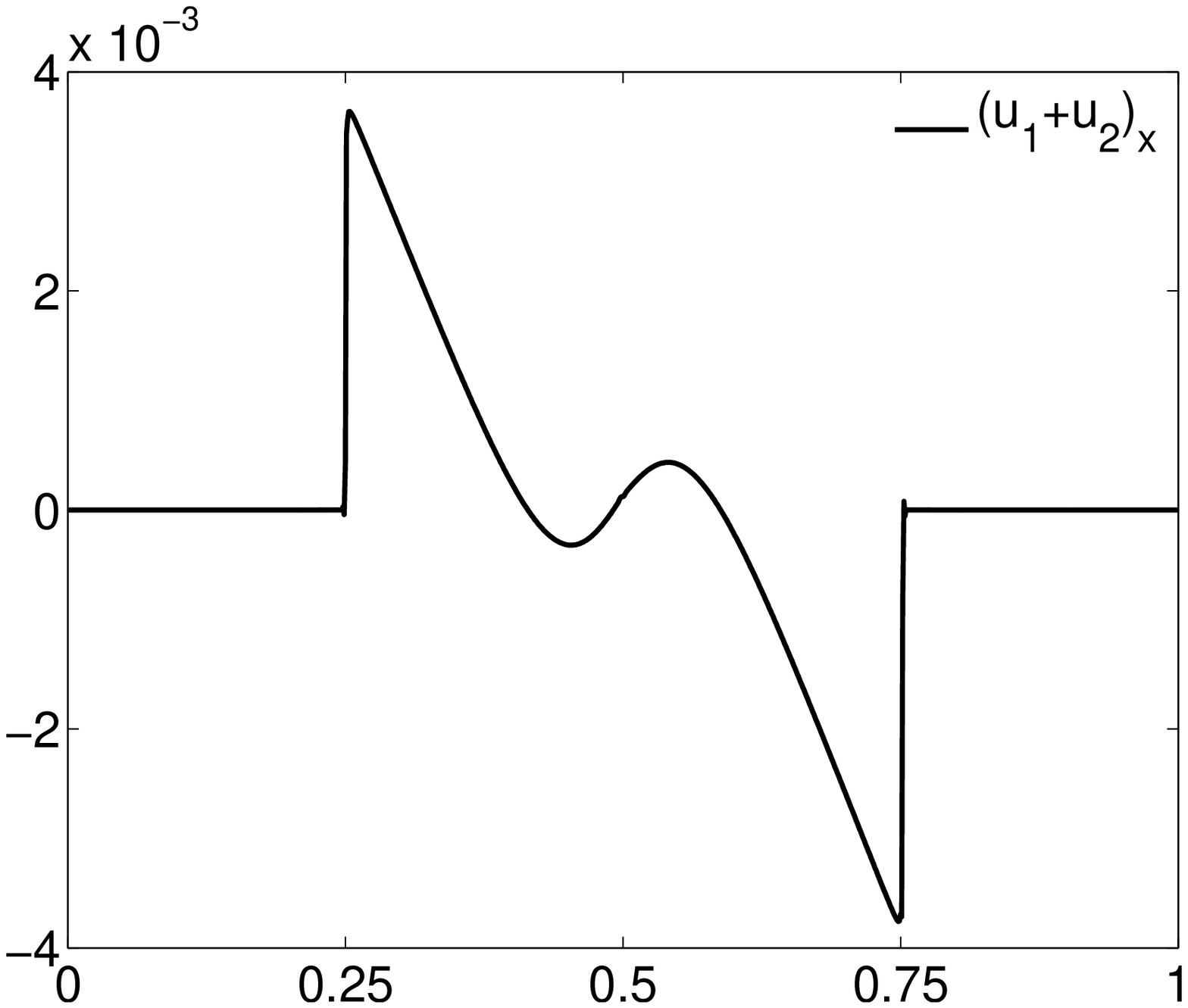}}\\
  \subfigure
 {\includegraphics[width=4cm,height=3cm]{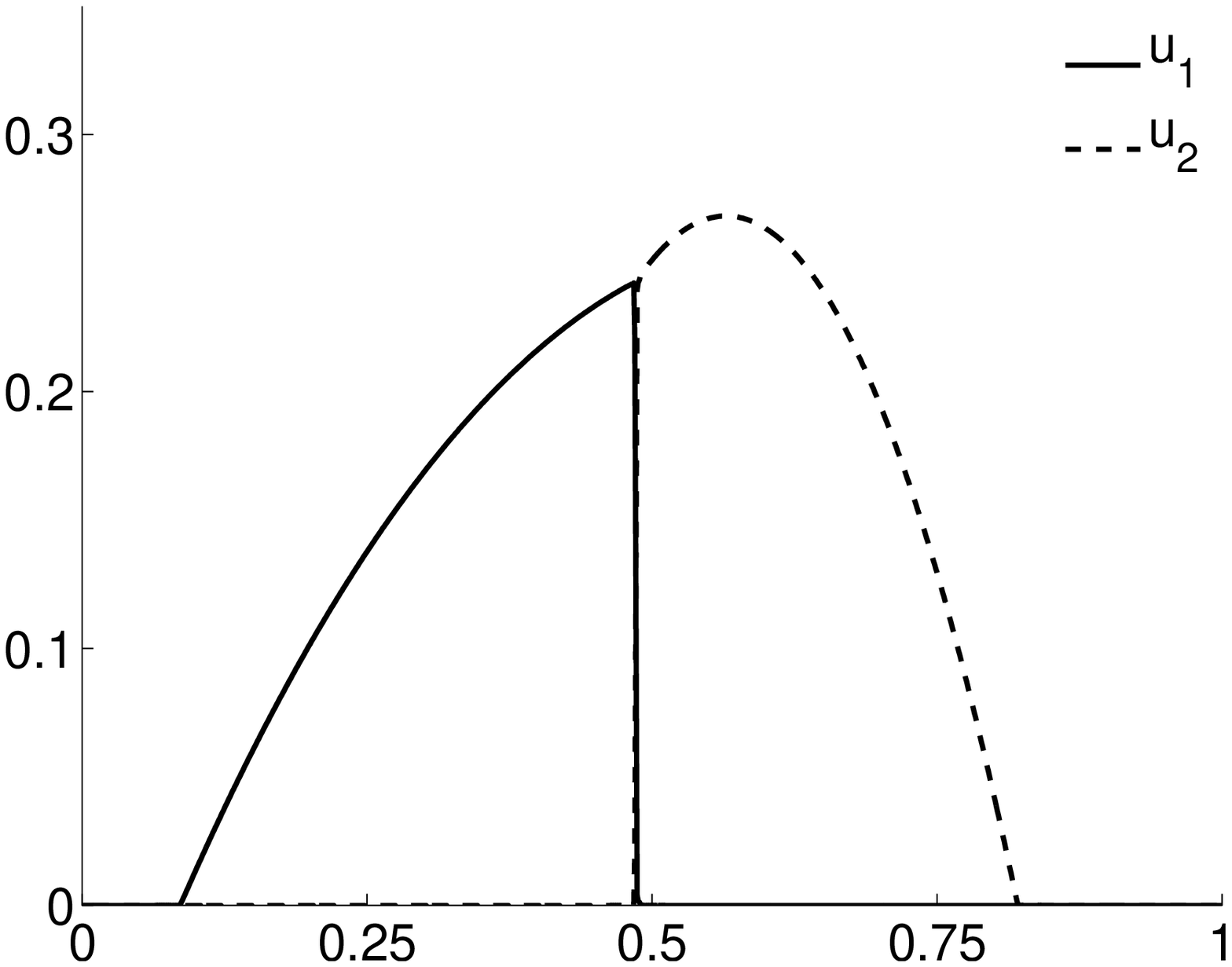}}
 {\includegraphics[width=4cm,height=3cm]{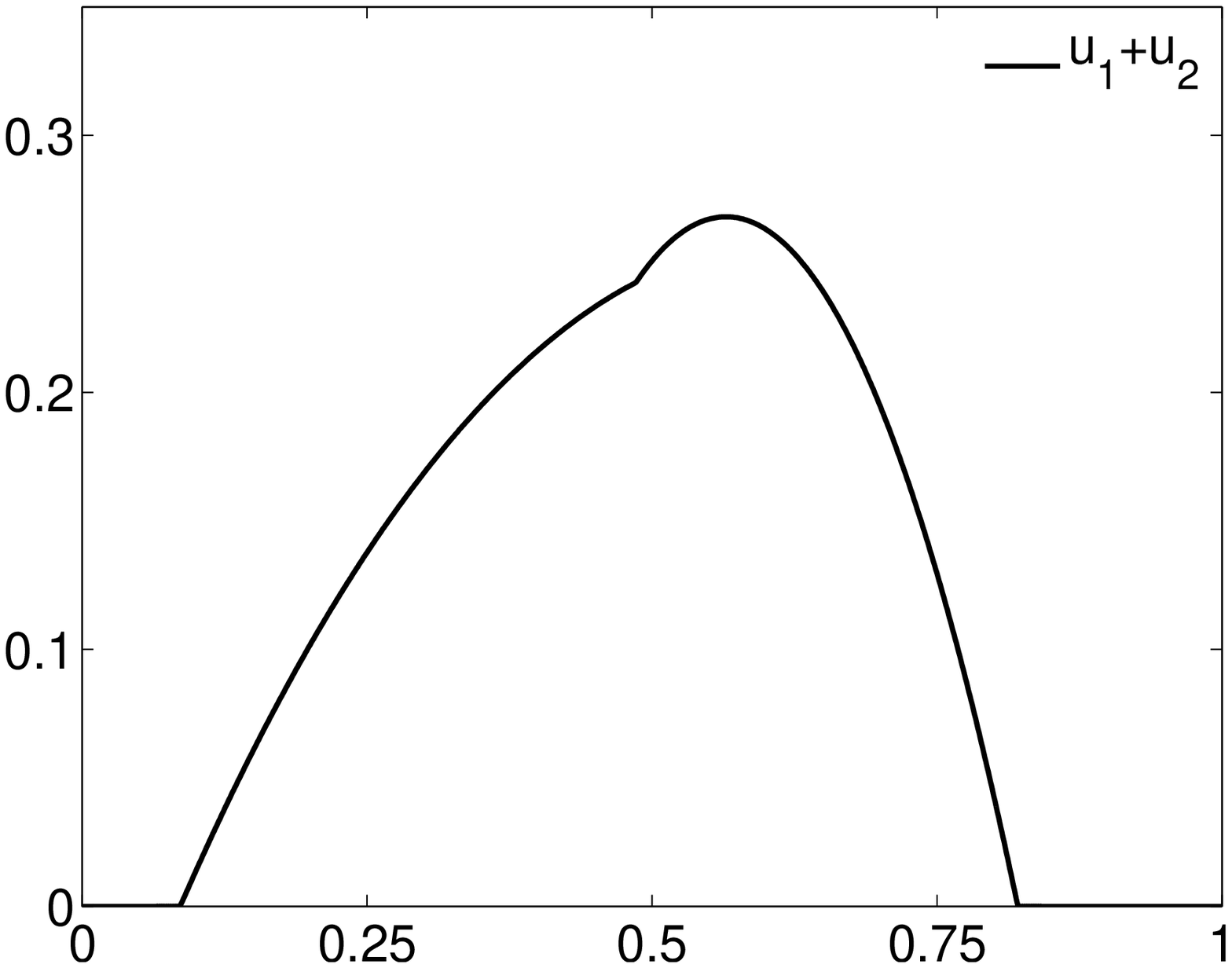}}
 {\includegraphics[width=4cm,height=3cm]{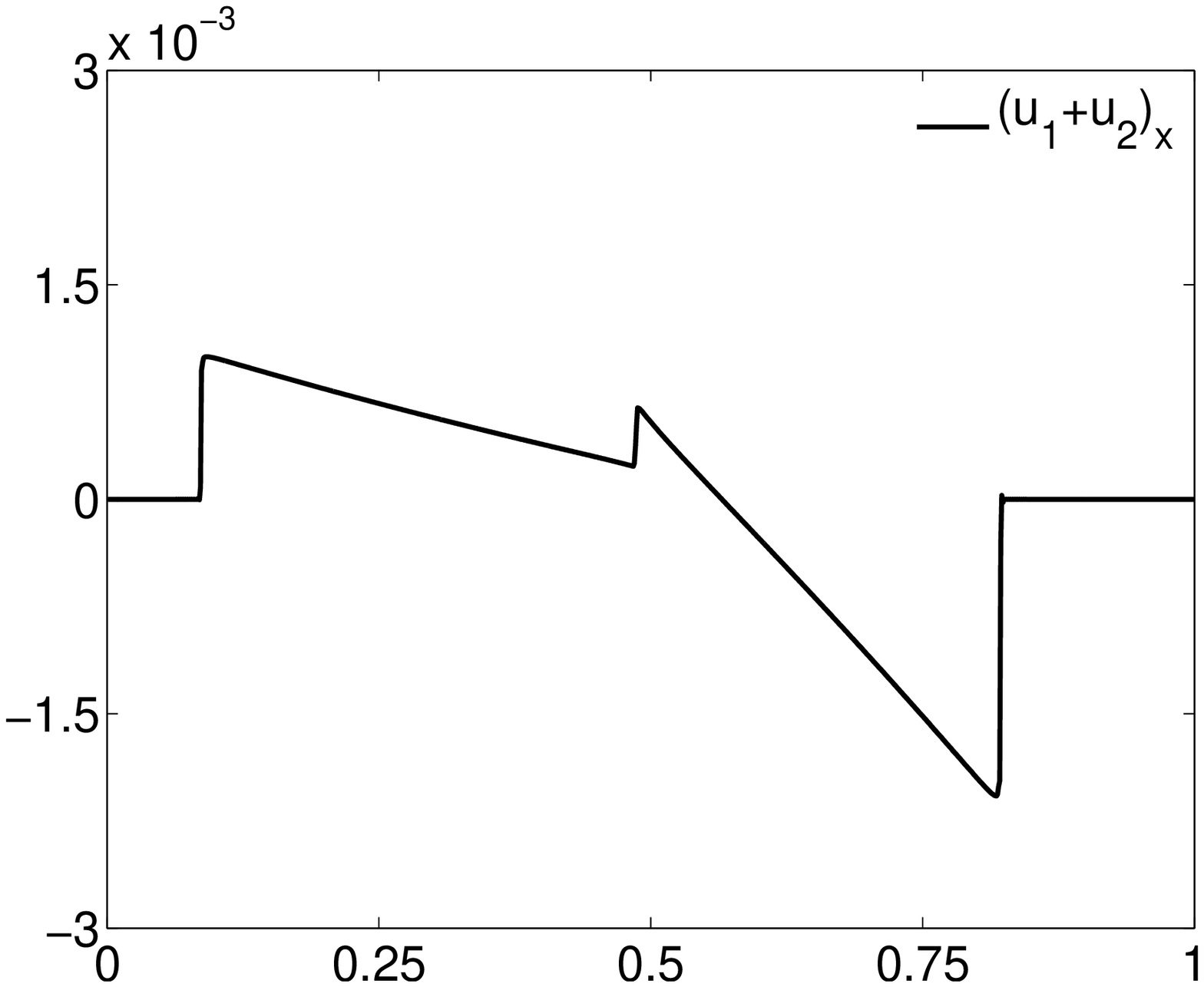}}
\caption{{\small A transient state of solutions of Experiments 1 (first row) and 2 (second
row). Left panel: solutions $(u_1^{(\delta)},u_2^{(\delta)})$. Center panel: The sum
$u^{(\delta)}=u_1^{(\delta)}+u_2^{(\delta)}$. Right panel: the space derivative of the sum,
$u_x^{(\delta)}$. }} \label{exp01_fig}
\end{figure}

In the following experiments we take a uniform partition of $\Omega=(0,1)$ in $10^3$
subintervals and the time step $\tau = 10^{-5}$.  The drift and the linear diffusion
coefficients are $b_i=c_i=0$, and the Lotka-Volterra terms, i.e. the right-hand side of
\eqref{eq:pde} have the form $f_i(u_1,u_2)=u_i(\alpha_i-\beta_{i1}u_1-\beta_{i2}u_2)$ with $
\alpha_1=1,~\beta_{11}=1,~\beta_{12}=0.5,~\alpha_2=5,~\beta_{21}=1$,and $\beta_{22}=2$. For
the initial data we take $u_{i0}= \exp((x-x_i)^2/0.001)$, $f_i=0$ for $i=1,2$ with $x_1=0.4$
and $x_2=0.6$. Although the initial data  do not satisfy the condition $u_{10}+u_{20}>0$ in
$\Omega$, this does not seem to affect the convergence or stability of the algorithm for the
cases under study. Finally, the tolerance parameter for the fixed point algorithm is set to
$\text{tol}=10^{-4}$, and the perturbation parameter to $\delta=10^{-3}$.

We run two experiments according to different nonlinear diffusion matrices. In the first
experiment, we set the same diffusion coefficient $a=1$ for both equations, which is the
situation studied in Theorems \ref{th:bertsch} and \ref{th.gs}. In the second experiment we
take different diffusivities, $a_1=1$ and $a_2=3$, in the equations for $u_1$ and $u_2$ (see
\eqref{eq:pde}). The aim of these experiments is to confirm numerically that, unlike the
case of equal diffusivities,  in our case the gradient of the sum $u_1+u_2$ may develop
discontinuity. This property can be checked on Figure~\ref{exp01_fig}. In the first row we
show the results for a transient state of the equal-diffusivities case. Although the
independent components of the solution, $u_1$ and $u_2$ exhibit a discontinuity at the
contact point, $x=0.5$, the sum $u_1+u_2$ is continuous and, as it can be seen in the right
panel of the first row, the derivative seems to be continuous as well. In the second row of
Figure~\ref{exp01_fig} we show the results corresponding to the different diffusivities
case. The behavior is clearly different. Although the continuity of $u_1+u_2$ still holds, a
discontinuity of $(u_1+u_2)_x$ at the contact point may be observed.

\end{document}